\documentclass[a4paper,11pt, twoside]{amsart}
\usepackage[utf8]{inputenc}
\usepackage{amssymb}
\usepackage{amsmath,amsthm}
\usepackage{etoolbox}
\usepackage{mathrsfs}
\usepackage[cal=boondox]{mathalfa}
\usepackage{amscd}
\usepackage{amsfonts}
\usepackage{tikz}
\usepackage{tikz-cd}
\usepackage[shortlabels]{enumitem}
\usepackage{rotating}
\usepackage{MnSymbol}
\usepackage{relsize}
\usepackage{bbm}
\usepackage{tcolorbox}
\usepackage{xcolor}
\usepackage{mathtools}
\usepackage[
backend=biber,
style=alphabetic,
sorting=anyt,
backref=true,
backrefstyle=none
]{biblatex}
\addbibresource{anticyclotomicpinert.bib}
\RequirePackage{doi}
\setlist{itemsep=1.0 pt}

\usepackage[a4paper,top=3cm,bottom=3cm,left=3cm,right=3cm]{geometry}
\usepackage{textgreek}

\pagestyle{plain}
\theoremstyle{plain}
\newtheorem{theorem}{Theorem}[section]
\newtheorem{proposition}[theorem]{Proposition}
\newtheorem{lemma}[theorem]{Lemma}
\newtheorem{corollary}[theorem]{Corollary}

\theoremstyle{definition}

\newtheorem{defi}[theorem]{Definition}
\newtheorem{nota}[theorem]{Notation}
\newtheorem{ass}[theorem]{Assumption}

\newtheorem{ex/}[theorem]{Example}

\newtheorem{rmk/}[theorem]{Remark}
\newenvironment{rmk}
  {%
   \pushQED{\qed}\begin{rmk/}}
  {\popQED\end{rmk/}}

\newcommand{\bb}{\mathbb}
\newcommand{\frk}{\mathfrak}
\newcommand{\cl}{\mathcal}

\newcommand{\Hf}{{\boldsymbol{f}}}
\newcommand{\Hg}{{\boldsymbol{g}}}
\newcommand{\Hh}{{\boldsymbol{h}}}

\DeclareMathOperator{\et}{\mathrm{\acute{e}t}}

\DeclareMathOperator{\etcc}{\mathrm{\acute{e}t,c}}

\DeclareMathOperator{\dR}{\mathrm{dR}}

\DeclareMathOperator{\cris}{\mathrm{cris}}
\DeclareMathOperator{\Fil}{\mathrm{Fil}}
\DeclareMathOperator{\BK}{\mathrm{BK}}
\DeclareMathOperator{\Sel}{\mathrm{Sel}}

\DeclareMathOperator{\Ind}{Ind}

\DeclareMathOperator{\rel}{\mathrm{rel}}

\DeclareMathOperator{\cor}{cor}

\DeclareMathOperator{\Tsym}{Tsym}
\DeclareMathOperator{\Symm}{Symm}

\DeclareMathOperator{\pr}{\mathtt{pr}}
\DeclareMathOperator{\DET}{\mathtt{Det}}

\newcommand{\cW}{\mathcal W}

\newcommand{\Q}{\mathbb{Q}}
\newcommand{\Z}{\mathbb{Z}}

\newcommand{\C}{\mathbb{C}}

\newcommand{\Gal}{\mathrm{Gal\,}}
\newcommand{\GL}{\mathrm{GL}}

\newcommand{\Tr}{\mathrm{Tr}}

\newcommand{\cond}{\mathrm{cond}}
\newcommand{\Frob}{\mathrm{Frob}}
\newcommand{\End}{\mathrm{End}}

\newcommand{\Aut}{\mathrm{Aut}}
\newcommand{\Fr}{\mathrm{Fr}}

\newcommand{\ord}{{\mathrm{ord}}}
\newfont{\gotip}{eufb10 at 12pt}

\newcommand{\cO}{{\mathcal O}}

\newcommand{\SL}{{\mathrm {SL}}}

\newcommand{\hf}{{\mathbf{f}}}

\newcommand{\Ss}{\mathscr{S}}
\newcommand{\Ls}{\mathscr{L}}
\newcommand{\Fs}{\mathscr{F}}

\DeclareMathOperator{\Hom}{Hom}

\numberwithin{equation}{section}

\begin{document}

\title{Diagonal cycles and anticyclotomic twists of modular forms at inert primes}

\author{Luca Marannino}

\begin{abstract}
We revisit the construction of \cite{CD2023} of an anticyclotomic Euler system for the $p$-adic Galois representation of a modular form, using diagonal classes. Combining this construction and some of the results of \cite{Mar2024a}, we obtain new results towards the Bloch--Kato conjecture in analytic rank one, assuming that the fixed prime $p$ is inert in the relevant imaginary quadratic field.
\end{abstract}


\date{\today}

\address{Sorbonne Universit\'{e} and Universit\'{e} Paris Cit\'{e}, CNRS, IMJ-PRG, F-75005 Paris, France}
\email{marannino@imj-prg.fr}


\maketitle

\setcounter{tocdepth}{1}
\tableofcontents

\section{Introduction}
Let $f\in S_k(\Gamma_0(N_f))$ be a newform of even weight $k\geq 2$ and let $K$ be a quadratic imaginary field of discriminant $-D_K$ coprime to $N_f$. Assume that we can write $N_f=N_f^+N_f^-$ where $N_f^+$ (resp. $N_f^-$) is divisible only by primes inert (resp. split) in $K$ and with $N_f^-$ squarefree divisible by an \emph{odd} number of primes (this is known as the \emph{definite} setting). 

Let $\chi$ denote an anticyclotomic Hecke character of $K$ of infinity type $(-j,j)$ with $|j|\geq k/2$ and with conductor coprime to $D_K N_f$. In this setting, the sign of the functional equation for the complex $L$-function $L(f/K,\chi,s)$ is known to be $-1$. 

\medskip
Fix now a prime $p\nmid 6N_fD_K$, $p$ coprime to the conductor of $\chi$. The Bloch-Kato conjecture predicts that the Bloch-Kato Selmer group for the $p$-adic $G_K:=\Gal(\bar{K}/K)$-representation  
\[
V_{f,\chi}:=V_f(1-k/2)\otimes\hat{\chi}^{-1}
\]
should have generically dimension $1$ (over a large enough finite extension $E$ of $\Q_p$ that we fix as our field of coefficients). Here $V_f$ denotes the (dual of the) $p$-adic Galois representation attached to $f$ by Deligne and $\hat{\chi}$ denotes the $G_K$-character attached via (geometrically normalized) class field theory to the so-called $p$-adic avatar of $\chi$.

\subsection{Main results}
Under suitable technical assumptions, the main result of this note gives a $p$-adic criterion to ensure that the Bloch-Kato Selmer group $\Sel_{\BK}(K,V_{f,\chi})$ is indeed one dimensional, under the hypothesis that the prime $p$ is inert in $K$. This criterion is expressed in terms of the $p$-adic $L$-function $\Theta_\infty^{\mathrm{Heeg}}(f,\chi_t)$ constructed by Chida-Hsieh in \cite{CH2018} (generalizing a construction of \cite{BD1996} for $f$ of weight $2$). Here $\chi_t$ denotes the \emph{tame part} of $\hat{\chi}$, which is a ring class character (see Section \ref{factorizationsectionintro} for more details). The square of $\Theta_\infty^{\mathrm{Heeg}}(f,\chi_t)$ interpolates the central values $L(f/K,\chi_t\nu,k/2)$ for $\nu$ an anticyclotomic character of $K$ of $p$-power conductor and $\infty$-type $(-t,t)$ with $|t|<k/2$. We can view $\Theta_\infty^{\mathrm{Heeg}}(f,\chi_t)$ as an element of the Iwasawa algebra $\cl{O}_E[\![\Gamma^-]\!]$, where $\Gamma^-\cong\Z_p$ is the Galois group of the anticyclotomic $\Z_p$-extension of $K$ and we can evaluate it at continuous characters $\Gamma^-\to\C_p^\times$ ($\C_p$ being the completion of a fixed algebraic closure of $\Q_p$). In particular, one can evaluate it at $\chi^-$, the \emph{wild part} of $\hat{\chi}$ (cf. again Section \ref{factorizationsectionintro} for the notation).

\begin{ass}
\label{introass}
Here we group the extra technical hypothesis needed for the proof of the main result of this paper:
\begin{enumerate}[(i)]
    \item $f$ is not a CM form and has big image at $p$ (cf. Definition \ref{def: big image});
    \item The mod $p$ Galois representation attached to $f$ is absolutely irreducible and $p$-distinguished.
    \item $p\nmid h_K$, where $h_K$ is the class number of $K$;
    \item $p>\max\{k-2,j+1\}$;
    \item $N_f$ is squarefree (i.e., $N_f^+$ is squarefree too);
    \item $\chi$ has conductor $c\cdot\cl{O}_K$ where $c\in\Z_{\geq 1}$ is coprime to $pD_K N_f$ and divisible only by primes split in $K$ and satisfies Assumption \ref{asschit} below.
\end{enumerate}
\end{ass}

As pointed out in Remark \ref{rmk: notseriousass} below, assumptions (v) and (vi) can certainly be relaxed with some more work.

\begin{theorem}
\label{mainThmIntro}
In the setting described above (including the hypothesis in Assumption \ref{introass}), we have that
\[
\chi^-(\Theta_\infty^{\mathrm{Heeg}}(f,\chi_t))\neq 0 \quad\Rightarrow \quad \dim_E\Sel_{\BK}(K,V_{f,\chi})=1
\]
\end{theorem}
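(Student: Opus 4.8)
The plan is to deduce the theorem from an anticyclotomic Euler system argument à la Kolyvagin, applied to the diagonal-class Euler system revisited in the paper (following \cite{CD2023}), using the non-vanishing of $\chi^-(\Theta_\infty^{\mathrm{Heeg}}(f,\chi_t))$ as the input that forces a ``bottom class'' to be non-torsion. Concretely, I would first recall the explicit reciprocity law relating the localization at $p$ of the diagonal cohomology class $\kap$ (for a suitable auxiliary pair of theta-type modular forms $g,h$ chosen so that $g\otimes h$ realizes the induction of $\hat\chi$ from $G_K$ to $G_\Q$) to the value $\chi^-(\Theta_\infty^{\mathrm{Heeg}}(f,\chi_t))$; this is where the factorization discussion of Section \ref{factorizationsectionintro} enters, matching $V_{g,h}^\psi$ (or rather $\Ind_K^\Q V_{f,\chi}$) with $V_f(1-k/2)\otimes\Ind_K^\Q\hat\chi^{-1}$ and hence identifying the relevant Selmer group over $\Q$ with $\Sel_{\BK}(K,V_{f,\chi})$ by Shapiro's lemma.

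Next I would invoke the results imported from \cite{Mar2024a} on the inert-prime situation: the point of assuming $p$ inert in $K$ is that the local condition at $p$ behaves differently (the prime does not split, so one works with the unramified/``big image at $p$'' structure of Assumption \ref{introass}(i)), and \cite{Mar2024a} presumably provides the needed local Bloch--Kato computations, the comparison between the motivic/syntactic regulator and the $p$-adic $L$-value, and the control of the Selmer complex in this setting. Using these, the explicit reciprocity law shows that $\chi^-(\Theta_\infty^{\mathrm{Heeg}}(f,\chi_t))\neq 0$ implies that $\loc_p(\kap)$ is non-zero in the singular (or, depending on the normalization, the finite) quotient of $H^1(\Q_p,-)$, which in particular forces $\kap\neq 0$ and moreover that its image under the relevant localization map is non-trivial.

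Then I would run the Euler system machine: the collection $\{\kapn\}_n$ (or $\{\kapnq\}$) over ring class fields, together with the norm-compatibility / Euler system relations established in the construction, bounds the Selmer group. Under Assumption \ref{introass}(i)--(ii) (big image at $p$ and absolute irreducibility plus $p$-distinguishedness of $\bar\rho_f$), the standard Kolyvagin-type derived-class argument applies to give $\rk_E\Sel_{\BK}(K,V_{f,\chi})\leq 1$; combined with the sign of the functional equation being $-1$ (so the rank is odd, in particular $\geq 1$, via the parity results or the non-vanishing of the bottom class itself), one concludes $\dim_E\Sel_{\BK}(K,V_{f,\chi})=1$. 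Hypotheses (iii) ($p\nmid h_K$), (iv) ($p>\max\{k-2,j+1\}$), and (v)--(vi) (squarefreeness and the shape of the conductor $c$) are used to ensure the diagonal classes are crystalline/de Rham at $p$ with the right Hodge--Tate weights, that the auxiliary forms $g,h$ can be chosen with the required local behaviour, and that the interpolation range of $\Theta_\infty^{\mathrm{Heeg}}$ actually contains the point $\chi^-$.

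The main obstacle I expect is the explicit reciprocity law at the inert prime $p$: relating $\loc_p(\kap)$ to $\chi^-(\Theta_\infty^{\mathrm{Heeg}}(f,\chi_t))$ requires a delicate $p$-adic analysis (comparison of the diagonal class with a triple-product $p$-adic $L$-function, then a factorization of that $p$-adic $L$-function into a product involving $\Theta_\infty^{\mathrm{Heeg}}$ and an Eisenstein-type or unit factor), all carried out in the non-split setting where the usual ordinary/split-multiplicative tools are unavailable and one must instead use the machinery of \cite{Mar2024a}. Keeping track of the interpolation factors and periods so that ``non-vanishing of the $p$-adic $L$-value'' translates cleanly into ``non-vanishing of $\loc_p(\kap)$'' — with no spurious zeros from Euler factors at $p$ or from the exceptional-zero phenomena — is the crux of the argument.
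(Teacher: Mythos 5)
Your high-level strategy matches the paper (explicit reciprocity law, factorization of a triple-product $p$-adic $L$-function, then a Kolyvagin/JNS Euler system argument), but several load-bearing details are missing or misidentified.

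\textbf{The auxiliary character.} You describe the factorization as producing ``$\Theta_\infty^{\mathrm{Heeg}}$ times an Eisenstein-type or unit factor.'' It does not. Theorem \ref{factorizationthmintro} factors $\mathcal{L}^f_{p,ac}(\Hf,\Hg,\Hh)$ as (a nonvanishing constant times) a \emph{product of two} big theta elements, $\Theta_\infty^{\mathrm{Heeg}}(\Hf,\alpha_t)$ and $\Theta_\infty^{\mathrm{Heeg}}(\Hf,\beta_t)$. The mechanism is that the chosen $\Hg=\pmb\theta_{\xi_1}$, $\Hh=\pmb\theta_{\xi_2}$ must satisfy $\xi_1\xi_2=\chi_t$ (so one factor carries $\chi$) \emph{and} $\xi_1\xi_2^{\mathbf c}=\delta^2$ for an auxiliary ring class character $\delta$ chosen, via the Chida--Hsieh non-vanishing result \cite[Thm.~D]{CH2018}, so that $L(f/K,\delta^2,k/2)\neq 0$. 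The second theta element is then evaluated \emph{inside} its interpolation range and is nonzero precisely because of this choice (Lemma \ref{lemma: nonvanishiff}). Without the auxiliary $\delta$, the nonvanishing of $\chi^-(\Theta_\infty^{\mathrm{Heeg}}(f,\chi_t))$ simply does not imply nonvanishing of the triple-product $p$-adic $L$-value, so your step from the hypothesis to ``$\loc_p(\kap)\neq 0$'' has a genuine hole.

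\textbf{The $\eta_2$-projection.} Writing $V(g)\otimes V(h)$ as $\Ind_K^\Q\hat\eta_1^{-1}\oplus\Ind_K^\Q\hat\eta_2^{-1}$ (with $\eta_1=\psi_1\psi_2|\cdot|^{(l+m-2)/2}$, $\eta_2=\psi_1\psi_2^{\mathbf c}|\cdot|^{(l+m-2)/2}$), the reciprocity law \ref{BSVMuThmA} detects non-vanishing of the full diagonal class $\kappa(f,g,h)$, not directly of its $\eta_1$-projection $\kappa_{f,\eta_1}$. The bridge is Lemma \ref{lemma: descriptionSelmer}/Lemma~5.6: the $\eta_2$-component of the de Rham test vector $\eta_f^\alpha\otimes\omega_g\otimes\omega_h\otimes t_{r+2}$ vanishes, because the Hodge--Tate weights of $V_2^*$ force $\Fil^0{\bf D}_{\dR}(V_2^*)$ to lie in $\Fil^1{\bf{D}}_{\dR}(V^*(f))\otimes(\cdots)$, transverse to $\eta_f^\alpha$. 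You assert that ``$g\otimes h$ realizes $\Ind_K^\Q\hat\chi$,'' which elides this direct-sum decomposition and the crucial vanishing that lets the nonvanishing of $\log_{\BK}(\kappa(f,g,h))$ transfer to $\kappa_{f,\eta_1}\neq 0$.

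\textbf{Finite vs.\ singular.} The class $\kappa_{f,\eta_1}$ lies in $\Sel_{\BK}$ (Proposition \ref{prop:in-Sel}), i.e.\ is crystalline at $p$, so its singular image vanishes; the reciprocity law controls its $\log_{\BK}$, hence its \emph{finite} part. Your hedge towards the singular quotient is wrong in this setting, and matters: the Euler system argument for the upper bound $\dim\leq 1$ takes the class inside the Selmer group, and the lower bound $\dim\geq 1$ is immediate from that same membership plus $\kappa_{f,\eta_1}\neq 0$. There is no need to invoke a parity/functional-equation argument for the lower bound; that is not what the paper does (Corollary \ref{cor: rank1} gives the equality directly).
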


\medskip
The proof of Theorem \ref{mainThmIntro} relies on the anticyclotomic Euler system constructed in \cite{CD2023} starting from diagonal classes. This Euler system is of the kind introduced by Jetchev-Nekov\'{a}\v{r}-Skinner in forthcoming work (we refer to \cite[Section 8.1]{ACR2023} for some of the features of this theory). In Section \ref{sec: JNSES} we review the construction of the Euler system (in the case of general balanced triples of weights). Since we assume that $p$ is inert, we are not able to construct a full Euler system (we cannot obtain classes in the vertical direction). Still, under suitable \emph{big image} assumptions, the non-vanishing of the bottom class $\kappa_{f,\chi}$ implies the one-dimensionality of the Bloch-Kato Selmer group.

\medskip
In Sections \ref{sec: padicL} and \ref{section: ERL} we introduce the ingredients that allow to prove that the non-vanishing of $\kappa_{f,\chi}$ is implied by the non-vanishing of the $p$-adic $L$-values $\chi^-(\Theta_\infty^{\mathrm{Heeg}}(f,\chi_t))$. Section \ref{sec: padicL} recollects some results obtained in \cite{Mar2024a} on generalized triple product $p$-adic $L$-functions. Section \ref{section: ERL} elaborates on the explicit reciprocity laws for diagonal classes proven in \cite{BSV2020a}. Section \ref{sec: final proof} is finally devoted to conclude the proof of Theorem \ref{mainThmIntro}.

\subsection{Relation with previous works and future perspectives}
As already indicated, this work deeply relies on \cite{CD2023}, in which the authors construct the relevant Euler system and study the applications to the case in which $p$ is split in $K$, obtaining stronger results both in the definite and indefinite setting. 

\medskip
We refer to the Introduction of \cite{CD2023} for a brief account on the literature on the subject and for a justification of the fact that one cannot rely on (generalized) Heegner points and/or Beilinson-Flach classes to deal with the setting studied in our work.

\medskip
For a more detailed survey on previous results on the Bloch-Kato conjecture and Iwasawa main conjecture for anticyclotomic twists of modular forms (with a specific emphasis on the $p$-inert case) we refer to the first chapter of \cite{BL2023}. We actually hope that some of the results of \cite{BL2023} towards Iwasawa main conjectures in the $p$-inert case could be reproved or improved relying on the anticyclotomic Euler system studied in this work.

\subsection*{Acknowledgements}
The author is currently funded by the Simons Collaboration on Perfection in Algebra, Geometry and Topology as postdoctoral researcher at CNRS. The author wishes to thank Francesc Castella and Kim Tuan Do for sharing an updated version of \cite{CD2023} and for spotting some mistakes in a first version of this work and Massimo Bertolini for the encouragement.

\section{Modular curves and their cohomology}\label{subsec:prelim}

In this section we fix our conventions for modular curves and Hecke operators. We refer to \cite[section 2]{ACR2023} and \cite[section 2]{BSVast1} (and the references therein) for more details.
\subsection{Modular curves}

Given integers  $M\geq 1$, $N\geq 1$, $m\geq 1$ and $n\geq 1$ with
$M+N\geq 5$, we denote by $Y(M(m),N(n))$ the affine modular curve over $\bb{Z}[1/MNmn]$ representing the functor taking a $\bb{Z}[1/MNmn]$-scheme $S$ to the set of isomorphism classes of 5-tuples $(E,P,Q,C,D)$, where:
\begin{enumerate}[(i)]
\item $E$ is an elliptic curve over $S$,
\item $P$ is an $S$-point of $E$ of order $M$,
\item $Q$ is an $S$-point of $E$ of order $N$,
\item $C$ is a cyclic order-$Mm$ subgroup of $E$ defined over $S$ and containing $P$,
\item $D$ is a cyclic order-$Nn$ subgroup of $E$ defined over $S$ and containing $Q$,
\end{enumerate}
and such that $C$ and $D$ have trivial intersection. If $m=1$ or $n=1$, we omit it from the notation. As usual, we will often write $Y_1(N)$ for $Y(1,N)$.

We denote by
\[
v=v_{M(m),N(n)}:E(M(m),N(n))\rightarrow Y(M(m),N(n))
\]
the universal elliptic curve over $Y(M(m),N(n))$.

\medskip
Define the modular group
\begin{align*}
\Gamma & (M(m),N(n)):=\\
&\Big\{\begin{psmallmatrix} a & b \\ c & d\end{psmallmatrix}\in \text{SL}_2(\bb{Z}): a\equiv 1\!\!\mod M, b\equiv 0\!\!\mod Mm, c\equiv 0\!\!\mod Nn,d\equiv 1\!\!\mod N\Big\}.  
\end{align*}
Then, denoting by $\mathcal{H}$ the upper half-plane, we have the complex uniformization
\begin{equation}\label{eq:unif}
Y(M(m),N(n))(\bb{C})\cong (\bb{Z}/M\bb{Z})^\times\times\Gamma(M(m),N(n))\backslash\mathcal{H},
\end{equation}
with a pair $(a,\tau)$ on the right-hand side corresponding
to the isomorphism class of the 5-tuple $(\bb{C}/\bb{Z}+\bb{Z}\tau, a\tau/M,1/N,\langle \tau/Mm\rangle, \langle 1/Nn\rangle)$.

If $r\geq 1$ is an integer, there is an isomorphism of $\bb{Z}[1/MNmnr]$-schemes
\[
\varphi_r: Y(M(m),N(nr))\xrightarrow{\simeq} Y(M(mr),N(n))
\]
defined by $(E,P,Q,C,D)\mapsto (E', P', Q', C',D')$, where $E'=E/NnD$, $P'$ is the image of $P$ in $E'$, $Q'$ is the image of $r^{-1}(Q)\cap D$ in $E'$, $C'$ is the image of $r^{-1}(C)$ in $E'$, and $D'$ is the image of $D$ in $E'$. Under the complex uniformizations $(\ref{eq:unif})$, the isomorphism $\varphi_r$ sends $(a,\tau)\mapsto (a,r\cdot\tau)$. If
\[
\varphi_r^\ast(E(M(mr),N(n)))\rightarrow Y(M(m),N(nr))
\]
denotes the base change of $E(M(mr),N(n))\rightarrow Y(M(mr),N(n))$ under $\varphi_r$, there is a natural degree-$r$ isogeny
\[
\lambda_r:E(M(m),N(nr))\rightarrow \varphi_r^\ast(E(M(mr),N(n))).
\]

\subsection{Degeneracy maps}

With the same notations as above, we have natural degeneracy maps
\begin{align*}
& Y(M(m),Nr(n))\xrightarrow{\mu_r} Y(M(m),N(nr))\xrightarrow{\nu_r} Y(M(m),N(n)), \\
& Y(Mr(m),N(n))\xrightarrow{\check{\mu}_r} Y(M(mr),N(n))\xrightarrow{\check{\nu}_r} Y(M(m),N(n)),
\end{align*}
forgetting the extra level structure, i.e.,
\begin{align*}
& \mu_r(E,P,Q,C,D)=(E,P,r\cdot Q, C,D), \\
& \nu_r(E,P,Q,C,D)=(E,P,Q,C,rD).
\end{align*}
and $\check{\mu}_r$ and $\check{\nu}_r$ defined analogously.

We also define degeneracy maps
\begin{equation}\label{eq:pi12}
\begin{aligned}
& \pi_1: Y(M(m),Nrs(nt))\rightarrow Y(M(m),N(ns)), \\
& \pi_2: Y(M(m),Nrs(nt))\rightarrow Y(M(m),N(ns)),
\end{aligned}
\end{equation}
determined by
\begin{align*}
& \pi_1(E,P,Q,C,D)=(E,P,rs\cdot Q,C, rtD), \\
& \pi_2(E,P,Q,C,D)=(E',P',Q',C',D'),
\end{align*}
where $E'=E/NnsD$, $P'$ is the image of $P$ in $E'$, $Q'$ is the image of $t^{-1}(s\cdot Q)\cap D$ in $E'$, $C'$ is the image of $C$ in $E'$ and $D'$ is the image of $D$ in $E'$. Under the complex uniformizations (\ref{eq:unif}), the map $\pi_1$ (resp. $\pi_2$) corresponds to the identity (resp. to multiplication by $rt$) on $\mathcal{H}$. It is straightforward to check that the maps $\pi_1$ and $\pi_2$ are respectively given by the compositions
\begin{align*}
& Y(M(m),Nrs(nt))\xrightarrow{\mu_{rs}} Y(M(m),N(nrst))\xrightarrow{\nu_{rt}} Y(M(m),N(ns)), \\
& Y(M(m),Nrs(nt))\xrightarrow{\mu_{rs}} Y(M(m),N(nrst))\xrightarrow{\varphi_{rt}} Y(M(mrt),N(ns))\xrightarrow{\check{\nu}_{rt}} Y(M(m),N(ns)).
\end{align*}

Finally, we will often need to consider the following specific degeneracy maps in what follows, which justifies introducing \emph{ad hoc} notation. Fix $N\in\Z_{\geq 5}$ and let $q\geq 1$ be a rational prime. We will typically rename:
\begin{enumerate}[(i)]
    \item $\pr_1:Y_1(Nq)\to Y_1(N)$ the specific instance of the map $\pi_1$ in \eqref{eq:pi12};
    \item $\pr_q:Y_1(Nq)\to Y_1(N)$ the specific instance of the map $\pi_2$ in \eqref{eq:pi12};
    \item $\pr_{11}:Y_1(Nq^2)\to Y_1(N)$ the specific instance of the map $\pi_1$ in \eqref{eq:pi12};
    \item $\pr_{qq}:Y_1(Nq^2)\to Y_1(N)$ the specific instance of the map $\pi_2$ in \eqref{eq:pi12}.
\end{enumerate}

\subsection{Relative Tate modules}\label{subsubsec:relT}

Fix a prime $p$. Given a $\bb{Z}[1/MNmnp]$-algebra $R$ and a $\bb{Z}[1/MNmnp]$-scheme $X$, we write $X_R$ for the base change of $X$ to $R$. For any $\bb{Z}[1/MNmnp]$-scheme $X$, denote by $A=A_X$ either the locally constant constructible sheaf $\mathbb Z/p^t(j)$ or the locally constant $p$-adic sheaf $\mathbb Z_p(j)$ on $X_{\et}$, for fixed $t\geq 1$ and $j\in\Z$. Write 
$$
v_R=v_{M(m),N(n),R}:E(M(m),N(n))_R\rightarrow Y(M(m),N(n))_R
$$
for the base change to a $\bb{Z}[1/MNmnp]$-algebra $R$ of the universal elliptic curve and set
$$
\mathscr{T}^*_{M(m),N(n)}(A):=R^1v_{R,\ast} \bb{Z}_p\otimes_{\bb{Z}_p} A\quad\text{and}\quad \mathscr{T}_{M(m),N(n)}(A):=\Hom(\mathscr{T}^*_{M(m),N(n)}(A),A).
$$
In particular, when $A=\bb{Z}_p$, this gives the dual of the relative Tate module and (respectively) the relative Tate module of the universal elliptic curve; in this case, we will often drop $A$ from the notation.

\medskip
The proper base change theorem for \'{e}tale cohomology ensures that $\mathscr T_{M(m),N(n)}(A)$ and $\mathscr T_{M(m),N(n)}^*(A)$ are locally constant $p$-adic sheaves on $Y(M(m),N(n))_S$ whose formation is compatible with base change along morphisms of $\bb{Z}[1/MNmnp]$-algebras $R\rightarrow R'$.

\medskip
For every integer $r\geq 0$, define
\[
\Ls_{M(m),N(n),r}(A) = \Tsym_A^r \mathscr T_{M(m),N(n)}(A), \quad \Ss_{M(m),N(n),r}(A) = \Symm_A^r \mathscr T_{M(m),N(n)}^*(A),
\]
where, for any finite free module $M$ over a profinite $\mathbb Z_p$-algebra $R$, one denotes by $\Tsym_R^r M$ the $R$-submodule of symmetric tensors in $M^{\otimes r}$ and by $\Symm_R^r M$ the maximal symmetric quotient of $M^{\otimes r}$. If the the level of the modular curve is clear, we will use the simplified notations \[ \Ls_r(A) = \Ls_{M(m),N(n),r}(A), \quad \Ls_r = \Ls_r(\mathbb Z_p), \quad \Ss_r(A) = \Ss_{M(m),N(n),r}(A), \quad \Ss_r = \Ss_r(\mathbb Z_p). \]

\subsection{Hecke and diamond operators}\label{subsec:hecke}

Let $\Fs_{M(m),N(n)}^r$ denote either $\Ls_{M(m),N(n),r}(A)$ or $\Ss_{M(m),N(n),r}(A)$ and let $q$ be a rational prime. Then there are natural isomorphisms of sheaves
\begin{equation}\label{eq:iso-q}
\nu_q^\ast(\Fs_{M(m),N(n)}^r)\cong \Fs_{M(m),N(nq)}^r\quad\text{and}\quad \check{\nu}_q^\ast(\Fs_{M(m),N(n)}^r)\cong \Fs_{M(mq),N(n)}^r,
\end{equation}
and therefore pullback morphisms
\begin{align*}
& H^i_{\et}(Y(M(m),N(n))_R,\Fs_{M(m),N(n)}^r)\xrightarrow{\nu_q^\ast} H^i_{\et}(Y(M(m),N(nq))_R,\Fs_{M(m),N(nq)}^r), \\
& H^i_{\et}(Y(M(m),N(n))_R,\Fs_{M(m),N(n)}^r)\xrightarrow{\check{\nu}_q^\ast} H^i_{\et}(Y(M(mq),N(n))_R,\Fs_{M(mq),N(n)}^r),
\end{align*}
and traces
\begin{equation}\label{eq:trace}
\begin{aligned}
& H^i_{\et}(Y(M(m),N(nq))_R,\Fs_{M(m),N(nq)}^r)\xrightarrow{\nu_{q\ast}} H^i_{\et}(Y(M(m),N(n))_R,\Fs_{M(m),N(n)}^r), \\
& H^i_{\et}(Y(M(mq),N(n))_R,\Fs_{M(mq),N(n)}^r)\xrightarrow{\check{\nu}_{q\ast}} H^i_{\et}(Y(M(m),N(n))_R,\Fs_{M(m),N(n)}^r).
\end{aligned}
\end{equation}

Also, the isogeny $\lambda_q$ induces morphisms of sheaves
$$
\lambda_{q\ast} : \Fs_{M(m),N(nq)}^r\rightarrow \varphi_q^\ast(\Fs_{M(mq),N(n)}^r) \quad\text{and}\quad
\lambda_{q}^\ast : \varphi_q^\ast(\Fs_{M(mq),N(n)}^r)\rightarrow \Fs_{M(m),N(nq)}^r.
$$
These morphisms allow us to define
\begin{align*}
& \Phi_{q\ast}:H^i_{\et}(Y(M(m),N(nq))_R,\Fs_{M(m),N(nq)}^r)\rightarrow H^i_{\et}(Y(M(mq),N(n))_R,\Fs_{M(mq),N(n)}^r), \\
& \Phi_{q}^\ast:H^i_{\et}(Y(M(mq),N(n))_R,\Fs_{M(mq),N(n)}^r)\rightarrow H^i_{\et}(Y(M(m),N(nq))_R,\Fs_{M(m),N(nq)}^r),
\end{align*}
as the compositions
\[
\Phi_{q\ast}=\varphi_{q\ast}\circ\lambda_{q\ast}\quad\text{and}\quad \Phi_q^\ast=\lambda_q^\ast\circ\varphi_q^\ast.
\]
We define the Hecke operators $T_q$ and the adjoint Hecke operators $T_q'$ acting on the \'{e}tale cohomology groups
\[
H^i_{\et}(Y(M(m),N(nq))_R,\Fs_{M(m),N(n)}^r)
\]
as the compositions $
T_q=\check{\nu}_{q\ast}\circ\Phi_{q\ast}\circ\nu_q^\ast$ and $T_q'=\nu_{q\ast}\circ\Phi_q^\ast\circ\check{\nu}_q^\ast$.

Now we introduce diamond operators. For $d\in (\bb{Z}/MN\bb{Z})^\times$, these are defined on the curves
$Y(M(m),N(n))$ as the automorphisms $\langle d\rangle$ acting on the moduli space by
$$
(E,P,Q,C,D)\mapsto (E,d^{-1}\cdot P,d\cdot Q, C, D).
$$
We can also define the diamond operator $\langle d \rangle$ on the corresponding universal elliptic curve as the unique automorphism making the diagram
\begin{center}
\begin{tikzcd}
&E(M(m),N(n))_R\arrow[r, "\langle d\rangle"]\arrow[d, "v"] & E(M(m),N(n))_R\arrow[d, "v"] \\ 
&Y(M(m),N(n))_R\arrow[r, "\langle d\rangle"] & Y(M(m),N(n))_R
\end{tikzcd}
\end{center}
cartesian. This in turn induces automorphisms $\langle d\rangle = \langle d\rangle^\ast$ and $\langle d\rangle'=\langle d\rangle_\ast$ on the group $H^i_{\et}(Y(M(m),N(n))_R,\Fs_{M(m),N(n)}^r)$ which are inverses of each other.

We will focus mostly on modular curves of the form $Y(1(m),N(n))$. In this case, the natural pairing $\Ls_r\otimes_{\bb{Z}_p}\Ss_r\rightarrow \bb{Z}_p$ together with cup-product yields a pairing
\begin{equation}
\label{Pkduality}
H^1_{\et}(Y(1(m),N(n))_R,\Ls_r(1))\otimes_{\bb{Z}_p} H^1_{\et,c}(Y(1(m),N(n))_R,\Ss_r)\rightarrow \bb{Z}_p    
\end{equation}
which becomes perfect after inverting $p$. The operators $T_q$, $T_q'$, $\langle d\rangle$, $\langle d\rangle'$ induce endomorphisms on compactly supported cohomology and
$$
(T_q,T_q'),\quad (T_q',T_q),\quad (\langle d\rangle,\langle d\rangle') \quad\text{and}\quad (\langle d\rangle',\langle d\rangle)
$$
are adjoint pairs under this pairing.

\medskip
As explained in \cite[Section 3]{BSVast1}, the determinant induces an isomorphism of sheaves
\begin{equation}
\label{isosr}
\mathbf{s}_r:\Ss_r(\Q_p)\cong \Ls_r(\Q_p)\otimes_{\Z_p}\Z_p(-r).
\end{equation}
As explained in \cite[Remark 3.3]{BSVast1}, the above isomorphism introduces denominators that are bounded by $r!$. 

In particular, if $p>r$, the (images of the) classes in $H^i(Y(1,N(n))_R,\Ss_r)$ are sent to (images of) classes in $H^i(Y(1,N(n))_R,\Ls_r(-r))$ under the induced isomorphism
\[
\mathbf{s}_r:H^i(Y(1,N(n))_R,\Ss_r(\Q_p))\xrightarrow{\cong}H^i(Y(1,N(n))_R,\Ls_r(\Q_p)(-r)).
\]
\subsection{Galois representations}
\label{subsec: Galoisreps}
From now on in this section $E$ will denote a finite (and large enough) extension of $\Q_p$ with ring of integers $\cO$.

\begin{nota}
\label{Heckeisotypicalcomp}
If $V$ is a $\Q_p$-vector space, we write $V_E:=V\otimes_{\Q_p}E$ and if $M$ is a $\Z_p$-module, we write $M_E:=M\otimes_{\Z_p}E$ and $M_\cO=M\otimes_{\Z_p}\cO$.

For an $E$-vector space $H$ endowed with an action of the good Hecke operators (i.e., $T_q$ for $q\nmid N$) and diamond operators of level $N$ and for $\xi\in S_\nu(N,\chi,E)$ an eigenform, we let $H[\xi]$ denote the $\xi$-Hecke isotypical component of $H$ (i.e., the maximal $E$-vector subspace where the Hecke and diamond operators act with the same eigenvalues as on $\xi$).  
\end{nota}

\begin{defi}
Given $\xi\in S_\nu(N,\chi,E)$ with $\nu\geq 2$ a normalized eigenform with Fourier coefficients lying in $E$, one can attach to it two Galois representations.
\begin{itemize}
    \item [(i)] We let $V_{N}(\xi)$ be the maximal $E$-quotient of $H^1_{\et}(Y_1(N)_{\bar{\Q}},\Ls_{\nu-2}(1))_E$ where the dual good Hecke operators (i.e., $T'_q$ for $q\nmid N$) and dual diamond operators act with the same eigenvalues as those of $\xi$. We also let $T_N(\xi)\subset V_N(\xi)$ denote the lattice obtained as the image of $H^1_{\et}(Y_1(N)_{\bar{\Q}},\Ls_{\nu-2}(1))_\cO$ inside $V_N(\xi)$ under the quotient map.
    \item [(ii)] We let $V^*_{N}(\xi):=H^1_{\etcc}(Y_1(N)_{\bar{\Q}},\Ss_{\nu-2})_E[\xi]$ (cf. Notation \ref{Heckeisotypicalcomp}) be the maximal $E$-subspace of $H^1_{\etcc}(Y_1(N)_{\bar{\Q}},\Ss_{\nu-2})_E$ where good Hecke and diamond operators act with the same eigenvalues as those of $\xi$.
    \end{itemize}
When the level $N$ is understood, we simply write $V(\xi)=V_N(\xi)$ and $V^*(\xi)=V^*_N(\xi)$.
\end{defi}

\begin{rmk}
If $\xi$ is new of level $N$, then $V^*_N(\xi)$ is identified with the $p$-adic Deligne representation associated to $\xi$ and $V_N(\xi)$ is identified with its dual. This means that $V_N(\xi)$ affords the unique (up to isomorphism) two-dimensional representation
\[
\rho_\xi\,:\,G_\bb{Q}\longrightarrow \GL_2(E)
\]
unramified outside $pN$ and characterized by the property that
\[
{\rm trace}\,\rho_\xi(\Fr_{q})=a_{q}(\xi)
\]
for all primes $q\nmid pN$, where $\Fr_{q}$ denotes an arithmetic Frobenius element at $q$.

In general $V_N(\xi)$ (resp. $V_N^*(\xi)$) is non-canonically isomorphic to finitely many copies of $V_{N_\xi}(\xi^{\circ})$ (resp. $V^*_{N_\xi}(\xi^{\circ})$), where $\xi^{\circ}$ is the newform of level dividing $N$ associated to $\xi$ and $N_\xi\mid N$ is the corresponding level.
\end{rmk}

\section{The construction of the Euler system}
This section recalls the construction of the anticyclotomic Euler systems appearing in \cite{CD2023}. We decided to write down some details in the construction because we need a direct construction for general balance triples of weights (not being able to rely on $p$-adic interpolation in the $p$-inert case) and because we think that it could still be interesting to compare the two slightly different constructions.

\label{sec: JNSES}
\subsection{Patching CM modules}\label{subsec:llz}

In this section we extend some results of \cite[Section 5.2]{LLZ2015} to the case of higher weight modular forms and we develop a bit of the machinery that is needed for the construction of the Euler systems.




Let $K$ be an imaginary quadratic field of discriminant $-D_K<0$, and let $\varepsilon_K$ be the corresponding quadratic character. Let $\psi$ be a Hecke character of $K$ of infinity type $(1-\nu,0)$ for some integer $\nu
\in\Z_{\geq 2}$ and conductor $\mathfrak f$, taking values in a finite extension $L/K$, and let $\chi$ be the unique Dirichlet character modulo $N_{K/\mathbb Q}(\mathfrak f)$ such that $\psi((n))=n^{\nu-1} \chi(n)$ for  integers $n$ coprime to $N_{K/\mathbb Q}(\mathfrak f)$. Put $N_\psi=N_{K/\mathbb Q}(\mathfrak f)D_K$ and $\chi_\psi=\chi\cdot\varepsilon_K$, and let $\theta_{\psi} \in S_\nu(N_{\psi}, \chi_\psi)$ be the newform attached to $\psi$, i.e.,
\[ \theta_{\psi} = \sum_{(\mathfrak a, \mathfrak f)=1} \psi(\mathfrak a) q^{ N_{K/\mathbb Q}(\mathfrak a)}.
\]

Fix a prime $p\geq 5$ unramified in $K$ and coprime to the class number of $K$, a prime $\frk{p}$ of $K$ above $p$ and a prime $\mathfrak{P}$ of $L$ above $\frk{p}$ (induced by the fixed embdedding $\iota_p$). Let $E=L_{\mathfrak P}$ be the completion of $L$ at $\mathfrak{P}$, with ring of integers $\cl{O}$. Let $\hat{\psi}$ be the $p$-adic avatar of $\psi$, i.e., the continuous $E$-valued character of $K^{\times} \backslash \mathbb A_K^{(\infty),\times}$ defined by 
\[ 
\hat{\psi}(x)=x_{\mathfrak p}^{1-\nu} \psi(x), 
\] 
where $x_{\mathfrak p}$ is the projection of the id\`ele $x$ to the component at $\mathfrak p$. We will also denote by $\hat{\psi}$ the corresponding character of $G_K$ obtained via class field theory (geometrically normalized). Then $V_{N_\psi}(\theta_\psi)\cong\Ind_K^\bb{Q} E(\hat{\psi}^{-1})$.

\begin{defi}
For an integral ideal $\frk{n}$ of $K$, we denote by $H(\frk{n})$ the maximal $p$-quotient of the corresponding ray class group, and by $K(\frk{n})$ the maximal $p$-extension in the corresponding ray class field. When $\frk{n}=n\cl{O}_K$ for $n\in\Z_{\geq 1}$, we simply write $H(n)$ and $K(n)$. We similarly define $H[n]$ and $K[n]$, for each integer $n>0$, as the maximal $p$-quotient in the corresponding ring class group and the maximal $p$-extension in the corresponding ring class field.
\end{defi}

Let $\mathfrak n$ be an integral ideal of $K$ and let $N=N_{K/\mathbb Q}(\mathfrak n)N_\psi$. Let $\bb{T}'_\nu(N)$ be the subalgebra of $\End_{\Z_p}\big(H^1_{\et}(Y_1(N)_{\bar{\Q}},\Ls_{\nu-2}(1))\big)$ generated by all the Hecke operators $T_q'$ (also for $q\mid N$) and diamond operators $\langle d\rangle'$ for $d\in(\Z/N\Z)^\times$.

\begin{proposition}
\label{prop:LLZ}
There is a morphism $\phi_{\frk{n}}:\bb{T}'_\nu(N) \rightarrow\cl{O}[H(\frk{n}\frk{f})]$ of $\Z_p$-algebras defined on generators by
\[
\phi_\frk{n}(T_{q}') = \sum_{\frk{q}}\psi(\frk{q})[\frk{q}]\qquad\text{and}\qquad \phi_\frk{n}(\langle d\rangle')=\chi_\psi(d)[(d)],
\]
where the (possibly empty) sum runs over ideals coprime to $\frk{n}\frk{f}$ of norm $q$.
\begin{proof}
If $\nu=2$, this is essentially \cite[Proposition 3.2.1]{LLZ2015}. The generalization to higher weights is obvious.
\end{proof}
\end{proposition}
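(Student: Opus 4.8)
The plan is to build $\phi_{\frk n}$ from the CM newforms obtained by twisting $\psi$ along characters of $H(\frk n\frk f)$. For a character $\rho\colon H(\frk n\frk f)\to\bar{\Q}_p^\times$ — a finite-order Hecke character of $K$ of conductor dividing $\frk n\frk f$ — the product $\psi_\rho:=\psi\cdot\rho$ is again a Hecke character of infinity type $(1-\nu,0)$, of conductor dividing $\frk n\frk f$, with associated Dirichlet character $n\mapsto\chi(n)\rho((n))$. Its theta series $\theta_{\psi_\rho}$ is a CM newform of weight $\nu$, level dividing $N=N_{K/\Q}(\frk n\frk f)D_K$ and nebentypus $n\mapsto\chi_\psi(n)\rho((n))$, with $a_q(\theta_{\psi_\rho})=\sum_{\frk q}\psi_\rho(\frk q)$, the sum over integral ideals of norm $q$ coprime to $\cond(\psi_\rho)$. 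The group ring $\cl O[H(\frk n\frk f)]$ is designed precisely so that $\rho\circ\phi_{\frk n}$ should recover the system of eigenvalues of $\theta_{\psi_\rho}$ for the \emph{dual} Hecke and diamond operators. So I would first produce, for each $\rho$, a $\Z_p$-algebra homomorphism $\lambda_\rho\colon\bb T'_\nu(N)\to E$ realizing that eigensystem on $H^1_\et(Y_1(N)_{\bar\Q},\Ls_{\nu-2}(1))$, and then package the $\lambda_\rho$ together.

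Since $\cond(\psi_\rho)\mid\frk n\frk f$, the form $\theta_{\psi_\rho}$ has level dividing $N$, so by Eichler--Shimura it contributes to $H^1_\et(Y_1(N)_{\bar\Q},\Ls_{\nu-2}(1))_{\bar\Q_p}$. For $q\nmid N$, the dual operator $T_q'$ acts on the $\theta_{\psi_\rho}$-isotypic part by the scalar $a_q(\theta_{\psi_\rho})$ and $\langle d\rangle'$ by $\chi_\psi(d)\rho((d))$, matching the claimed formula. The real work is at $q\mid N$: one must check that the truncated sum $\sum_{\frk q}\psi_\rho(\frk q)$ over ideals of norm $q$ coprime to $\frk n\frk f$ (rather than to $\cond(\psi_\rho)$) is a legitimate eigenvalue of $U_q'$ on the possibly $q$-old $\theta_{\psi_\rho}$-isotypic subspace at level $N$, and that these choices for the various $q\mid N$ are simultaneously attained by one eigenvector; that vector defines $\lambda_\rho$. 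This is a short but unavoidable case analysis on the $\frk q$-adic valuations of $\frk n\frk f$ and of $\cond(\psi_\rho)$, using the standard description of $q$-th Fourier coefficients of theta series at split and ramified primes dividing the conductor, together with the fact that on the $q$-old part $U_q'$ acts in at most two Jordan steps with eigenvalues read off from $a_q(\theta_{\psi_\rho})$ and the $q$-part of $\cond(\psi_\rho)$; in each case the prescribed scalar turns out to be admissible. By construction $\lambda_\rho(T_q')=\rho\big(\sum_{\frk q}\psi(\frk q)[\frk q]\big)$ and $\lambda_\rho(\langle d\rangle')=\rho\big(\chi_\psi(d)[(d)]\big)$.

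To assemble: $H(\frk n\frk f)$ is a finite abelian $p$-group and $E$ is large enough, so $\cl O[H(\frk n\frk f)]\to\prod_\rho\cl O$, $[\frk g]\mapsto(\rho(\frk g))_\rho$, is an injective $\Z_p$-algebra homomorphism. The product $\Lambda=(\lambda_\rho)_\rho\colon\bb T'_\nu(N)\to\prod_\rho E$ is then a $\Z_p$-algebra homomorphism carrying the generators $T_q'$ and $\langle d\rangle'$ into the image of $\cl O[H(\frk n\frk f)]$ — here one uses that $\psi(\frk q)$ and $\chi_\psi(d)$ are algebraic integers, hence lie in $\cl O=\cl O_{L_{\mathfrak P}}$. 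Since those elements generate $\bb T'_\nu(N)$ as a $\Z_p$-algebra, $\Lambda$ factors through $\cl O[H(\frk n\frk f)]$, giving the desired $\phi_{\frk n}$ with the stated effect on generators.

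Finally, the weight-$2$ case is \cite[Proposition 3.2.1]{LLZ2015}, and the generalization to $\nu>2$ is indeed immediate: the identity being checked at each prime involves only $\psi$, ideals of $K$, and Dirichlet characters, with no mention of the weight, and the cohomological input (Eichler--Shimura and the structure of oldspaces) is insensitive to replacing the constant sheaf $\Z_p(1)$ by $\Ls_{\nu-2}(1)$. The only genuinely substantive point, were one to write everything out, is the bad-prime bookkeeping of the second paragraph — that is where I would expect to spend the effort.
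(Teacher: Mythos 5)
Your reconstruction is correct and fills in what the paper delegates to \cite[Proposition 3.2.1]{LLZ2015}; the strategy (evaluate on the characters $\rho$ of $H(\frk{n}\frk{f})$, realize each $\lambda_\rho$ by a suitably chosen old eigenvector in the $\theta_{\psi_\rho}$-isotypic part of $H^1_{\et}(Y_1(N)_{\bar{\Q}},\Ls_{\nu-2}(1))$, then assemble via the injection $\cl{O}[H(\frk{n}\frk{f})]\hookrightarrow\prod_\rho E$ and conclude by checking on generators) is the natural one and is what the cited proposition rests on. You correctly identify the genuine content as the prime-by-prime verification at $q\mid N$ that the prescribed truncated sum is an admissible $T_q'$-eigenvalue on the old space: for $q$ split this separates into cases according to which of $\frk{q},\overline{\frk{q}}$ divides $\cond(\psi_\rho)$ and/or $\frk{n}\frk{f}$, and when the sum is empty the prescribed eigenvalue $0$ is indeed available because $q^2$ then divides $N/N_{\theta_{\psi_\rho}}$, so the $q$-old space is at least three-dimensional; the inert and ramified cases are handled the same way. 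Two small points worth making explicit if you wrote this out: $\psi_\rho\neq\psi_\rho^{\mathbf{c}}$ since their infinity types differ, so $\theta_{\psi_\rho}$ is genuinely cuspidal and contributes nontrivially; and the simultaneous choice of stabilized eigenvector across all $q\mid N$ is available because the corresponding Atkin--Lehner level-raising operators for distinct $q$ commute. Your closing observation also captures precisely why the paper treats the passage from $\nu=2$ to general $\nu$ as obvious: every identity checked is purely about $\psi$, ideals of $K$, and nebentypus data, and the cohomological inputs are insensitive to replacing the constant sheaf by $\Ls_{\nu-2}(1)$.
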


Now let $\frk{n'}=\frk{nq}$ for some prime ideal $\frk{q}$ above a rational prime $q$ and let $N'=N_{K/\bb{Q}}(\frk{n'})N_\psi$. Following \cite[Section 3.3]{LLZ2015}, we define norm maps
\[
\cl{O}[H(\frk{n}')]\otimes_{(\bb{T}'_\nu(N'),\phi_\frk{n'})} H^1_{\et}(Y_1(N')_{\overline{\bb{Q}}},\Ls_{\nu-2}(1))\xrightarrow{\cl{N}_\frk{n}^\frk{n'}} \cl{O}[H(\frk{n})]\otimes_{(\bb{T}'_\nu(N),\phi_\frk{n})} H^1_{\et}(Y_1(N)_{\overline{\bb{Q}}},\Ls_{\nu-2}(1))
\]
by the formulae:
\begin{enumerate}[(i)]
\item if $\frk{q}\mid\frk{n}$,
$$
\cl{N}_\frk{n}^\frk{n'}:=1\otimes \pr_{1\ast};
$$
\item if $\frk{q}\nmid\frk{n}$ and $q\neq p$ is ramified or split,
$$
\cl{N}_\frk{n}^\frk{n'}:=1\otimes \pr_{1\ast}-\frac{\psi(\frk{q})[\frk{q}]}{q^{\nu-1}}\otimes \pr_{q\ast};
$$
\item if $\frk{q}\nmid\frk{n}$ and $q\neq p$ is inert,
$$
\cl{N}_\frk{n}^\frk{n'}:=1\otimes \pr_{11\ast}+\frac{\chi_{\psi}(q)[(q)]}{q^{\nu-1}}\otimes \pr_{qq\ast}.
$$
\end{enumerate}

\begin{rmk}
In general, one cannot define $\cl{N}_\frk{n}^\frk{np}$, unless $\frk{p}\mid\frk{n}$ (due to the presence of denominators). On the other hand, if one assumes that $p$ splits in $K$ as $(p)=\frk{p}\bar{\frk{p}}$ (soon this will not be the case for us) and $\bar{\frk{p}}\nmid\frk{n}$, one can still define $\cl{N}_\frk{n}^\frk{np}$ (cf. \cite[Proposition 4.3.6]{LLZ2015} and \cite[Proposition 5.2.5]{LLZ2015}).
\end{rmk}

More generally, for $\frk{n'}=\frk{n}\frk{r}$ with $\frk{r}$ a product of (not necessarily distinct) prime ideals, we define the map $\cl{N}_\frk{n}^\frk{n'}$ by composing in the natural way the previously defined norm maps (if such norm maps can be defined). 

\begin{theorem}
\label{thm:LLZ}
Assume that $p>\nu$ and that if $p$ splits (resp. is inert) in $K$, then $\bar{\frk{p}}\nmid\frk{f}$ (resp. $(p)\nmid\frk{f}$). Let $\cl{B}_{\frk{f}}$ be the set of integral ideals of $K$ coprime to $\overline{\frk{p}}\cdot (\nu-2)!$ (resp. $p\cdot (\nu-2)!$) if $p$ splits (resp. is inert) in $K$. Then there is a family of $G_\bb{Q}$-equivariant isomorphisms of $\cl{O}[H(\frk{n}\frk{f})]$-modules
\[
\nu_{\frk{n}}: H^1_{\et}(Y_1(N)_{\overline{\bb{Q}}},\Ls_{\nu-2}(1))\otimes_{(\bb{T}'_\nu(N),\phi_{\frk{n}})}\cl{O}[H(\frk{n}\frk{f})]\xlongrightarrow{\cong}\Ind_{K(\frk{n}\frk{f})}^\bb{Q}\cl{O}(\hat{\psi}^{-1})
\]
for all integral ideals $\frk{n}\in\cl{B}_{\frk{f}}$, such that for $\frk{n}\mid\frk{n'}$ the diagram
\begin{center}
\begin{tikzcd}
& H^1_{\et}(Y_1(N')_{\overline{\bb{Q}}},\Ls_{\nu-2}(1))\otimes_{(\bb{T}'_\nu(N'),\phi_\frk{n}')}\cl{O}[H(\frk{n}'\frk{f})]\arrow[r, "\nu_{\frk{n}'}", "\cong"']\arrow[d,"\cl{N}_\frk{n}^{\frk{n}'}"] &\Ind_{K(\frk{n'}\frk{f})}^\bb{Q}\cl{O}(\hat{\psi}^{-1})\arrow[d]\\
&H^1_{\et}(Y_1(N)_{\overline{\bb{Q}}},\Ls_{\nu-2}(1))\otimes_{(\bb{T}'_\nu(N),\phi_\frk{n})}\cl{O}[H(\frk{n}\frk{f})]\arrow[r, "\nu_{\frk{n}}", "\cong"']\arrow[r,"\nu_{\frk{n}}", "\cong"' ]&\Ind_{K(\frk{n}\frk{f})}^\bb{Q}\cl{O}(\hat{\psi}^{-1})
\end{tikzcd}
\end{center}
commutes, where the right vertical arrow is the natural norm map.
\begin{proof}
When $\nu=2$ this is \cite[Corollary 5.2.6]{LLZ2015}. The latter result relies on Ihara's lemma (cf. \cite[Lemma 4.2.1]{LLZ2015} for the statement) and on a crucial theorem due to Wiles (cf. \cite[Theorem 4.1.4]{LLZ2015}). Ihara's lemma has been generalized to higher weight forms in \cite[Lemma 3.2]{Dia1991} (which forces to exclude ideals dividing $(\nu-2)!$) and the aforementioned result of Wiles has been generalized to higher weight forms in \cite[Theorem 2.1]{FJ1995} (under the assumption that $p>\nu$). Hence one can virtually repeat the constructions in \cite[Sections 4 and 5]{LLZ2015} to obtain the stated result in the higher weight case.
\end{proof}
\end{theorem}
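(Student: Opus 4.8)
The plan is to reduce the higher-weight case to the weight-two case of \cite{LLZ2015} by a careful ``change of coefficients'' argument, replacing the two inputs to their proof---Ihara's lemma and Wiles' freeness result---by their higher-weight analogues, which are available in the literature precisely under the hypotheses we are assuming ($p>\nu$ forces us into the range where the relevant Hecke modules behave well, and the factorials $(\nu-2)!$ appear exactly because of the denominators in Diamond's generalization of Ihara's lemma). Concretely, I would first recall the étale sheaf $\Ls_{\nu-2}(1)$ and observe that, after localizing at the maximal ideal cut out by $\theta_\psi$ (equivalently, after tensoring through $\phi_{\frk n}$), the $G_\bb{Q}$-module $H^1_{\et}(Y_1(N)_{\overline{\bb{Q}}},\Ls_{\nu-2}(1))\otimes_{(\bb{T}'_\nu(N),\phi_{\frk n})}\cl{O}[H(\frk{n}\frk{f})]$ is built out of the Deligne representation of a CM form, hence is an induced representation; the content of the theorem is the \emph{compatibility} of these identifications under the norm maps $\cl{N}_\frk{n}^{\frk{n}'}$.

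The key steps, in order, would be: (1) Establish the base case $\frk{n}=\frk{f}$ (or the smallest admissible ideal): here one identifies $H^1_{\et}(Y_1(N)_{\overline{\bb{Q}}},\Ls_{\nu-2}(1))$ localized at $\theta_\psi$ with $V_{N_\psi}(\theta_\psi)\cong\Ind_K^\bb{Q}E(\hat\psi^{-1})$ after base change, using the identification recalled just before the Definition of $H(\frk n)$, together with the higher-weight version of Wiles' theorem (\cite[Theorem 2.1]{FJ1995}) to control the lattice and the multiplicity, i.e.\ to guarantee the relevant Hecke module is free of the expected rank over $\cl O[H(\frk n\frk f)]$ so that $\nu_{\frk n}$ is an \emph{isomorphism} and not merely a surjection. (2) Prove the inductive step $\frk n\mid\frk n'=\frk n\frk q$: here one writes down the three norm maps case by case (as in the statement) and checks, on modular curves, that the diagram with the natural norm map on the induced representations commutes; this is a formal manipulation of degeneracy maps $\pr_1,\pr_q,\pr_{11},\pr_{qq}$ and the isomorphisms \eqref{eq:iso-q}, combined with Ihara's lemma (in Diamond's form \cite[Lemma 3.2]{Dia1991}) to handle the inert case where one must invert the operator attached to $\frk q\bar{\frk q}$ and to know that the resulting map on Hecke modules is an isomorphism. (3) Assemble the compatible system over all $\frk n\in\cl B_{\frk f}$ by composing the elementary norm maps, noting the coprimality restriction to $\overline{\frk p}\cdot(\nu-2)!$ (resp.\ $p\cdot(\nu-2)!$) is exactly what is needed for every intermediate norm map to be defined and for Diamond's lemma to apply without denominators obstructing the argument.

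The main obstacle will be Step (2) in the inert case: unlike the split case treated in \cite{LLZ2015}, here one cannot construct a genuine ``vertical'' norm compatibility at $p$, and even at an auxiliary inert prime $q$ the norm map involves $\pr_{11}$ and $\pr_{qq}$ from level $Nq^2$ rather than a single degeneracy map, so the verification that $1\otimes\pr_{11\ast}+\frac{\chi_\psi(q)[(q)]}{q^{\nu-1}}\otimes\pr_{qq\ast}$ matches the arithmetic norm $K(\frk n'\frk f)\to K(\frk n\frk f)$ on the CM side requires knowing the precise action of the Hecke operator $T_q$ (which, for a CM form, acts as $0$ at inert $q$, collapsing the two-term expression appropriately) together with a rank/freeness input from \cite[Theorem 2.1]{FJ1995} to promote the resulting surjection to an isomorphism. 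The remaining work---rewriting $\pi_1,\pi_2$ as explicit compositions of $\mu,\nu,\varphi,\check\nu$ as in the excerpt, and chasing through the sheaf isomorphisms \eqref{eq:iso-q}---is bookkeeping that runs exactly parallel to \cite[Sections 3--5]{LLZ2015}, with the single new feature that one carries the Tate twist and the symmetric-power coefficients along unchanged since $p>\nu$ keeps us away from the denominators in $\mathbf{s}_r$.
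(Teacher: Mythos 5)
Your proposal is correct and follows essentially the same route as the paper: reduce to the weight-$2$ case of \cite[Corollary 5.2.6]{LLZ2015}, then replace its two inputs — Ihara's lemma and Wiles' freeness theorem — by their higher-weight generalizations (\cite[Lemma 3.2]{Dia1991} and \cite[Theorem 2.1]{FJ1995}), which is exactly where the hypotheses $p>\nu$ and the coprimality to $(\nu-2)!$ come from. The only minor point worth flagging is that the ``vertical'' norm compatibility at $p$ you mention as a worry is not actually within the scope of this theorem (the ideals in $\cl{B}_{\frk f}$ are required to be coprime to $\overline{\frk p}$ in the split case and to $p$ in the inert case, so the theorem only asserts tame compatibilities); the inert computation at an auxiliary prime $q\neq p$ that you correctly describe — using $T_q=0$ on the CM form and the two-term map built from $\pr_{11\ast}$ and $\pr_{qq\ast}$ — is already handled in \cite{LLZ2015} in weight $2$ and carries over unchanged.
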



Now let $\psi_1$ (resp. $\psi_2$) be an algebraic Hecke character of $K$ of infinity type $(1-l,0)$ (resp. $(1-m,0)$) and conductor $(c)$ for some $c\in\Z_{\geq 1}$ coprime to $p$ and divisible only by primes split in $K$. We keep the notation introduced above adding a subscript $i\in\{1,2\}$. In this situation, we write $M:=N_{\psi_1}=N_{\psi_2}=c^2\cdot D_K$. Since the role played by $\psi_1$ and $\psi_2$ will be symmetric, it is harmless to assume directly that $l\geq m\geq 2$. In what follows we will always assume that $p\geq\max\{5,l+1\}$.

\begin{nota}
Let $n\in\Z_{\geq 1}$ be an integer coprime to $M$ and let $\bb{T}'_\nu(n)$ denote the $\Z_p$-subalgebra of $\End_{\Z_p}(H^1(Y(1,Mn)_{\bar{\Q}},\Ls_{\nu-2}(1))$ generated by all Hecke operators $T_q'$ and diamond operators $\langle d\rangle'$. We also denote by $\Delta_n$ the subgroup of diamond operators given by $\langle d\rangle '$ for integers $d\equiv 1\mod M$. 
There is an obvious diagonal morphism $\Z_p[\Delta_n]\to \bb{T}'_\nu(n)\otimes_{\Z_p}\bb{T}'_\mu(n)$ and we let $\bb{T}'_{l,m}(n)$ be the quotient of $\bb{T}'_\nu(n)\otimes_{\Z_p}\bb{T}'_\mu(n)$ defined as
\[
\bb{T}'_{l,m}(n)=\big(\bb{T}'_\nu(n)\otimes_{\Z_p}\bb{T}'_\mu(n)\big)\otimes_{\Z_p[\Delta_n]}\Z_p
\]
where the map $\Z_p[\Delta_n]\to\Z_p$ is the augmentation map.
\end{nota}

\begin{rmk}
The algebra $\bb{T}'_{l,m}(n)$ acts naturally on the $\Delta_n$-coinvariants
\[
\big(H^1_{\et}(Y(Mn)_{\bar{\Q}},\Ls_{l-2}(1))\otimes_{\Z_p}H^1_{\et}(Y(Mn)_{\bar{\Q}},\Ls_{m-2}(1))\big)_{\Delta_n}
\]
for the diagonal action of $\Delta_n$.
\end{rmk}
\begin{ass}\label{ass:selfdual}
We assume that $\chi_{\psi_1}\chi_{\psi_2}$ is the trivial character modulo $M$.
\end{ass}

\begin{nota}
For every prime $q$ that splits in $K$, we choose one of the primes $\frk{q}$ of $K$ dividing it and we write $\overline{\frk{q}}$ to denote the other prime ideal of $K$ above $q$.
For $n\in\Z_{\geq 1}$ coprime to $M$ we write $n=n^+n^-$, with $n^+$ divisible only by primes splitting in $K$ and $n^-$ squarefree and divisible only by primes inert in $K$. We write $n\cl{O}_K=\frk{n}^+\overline{\frk{n}}^+(n^-)$ as an ideal of $\cl{O}_K$ according to the choice made above, with $\frk{n}^+$ and $\overline{\frk{n}}^+$ ideals of norm $n^+$ such that $n^+\cl{O}_K=\frk{n}^+\overline{\frk{n}}^+$. Similarly, we write $c\,\cl{O}_K=\frk{c}\overline{\frk{c}}$.
\end{nota}

Since we assume that $p\geq 5$ does not divide the class number of $K$, it is easy to see that we have canonical identifications:
\[
\tau_{cn}: H(\frk{c}\frk{n}^+)\times H(\overline{\frk{c}}\overline{\frk{n}}^+)\times H(n^-)\xrightarrow{\cong} H(cn).
\]
We let
\[
\sigma_{cn}:H(\frk{c}\frk{n}^+(n^-))\times H(\overline{\frk{c}}\overline{\frk{n}}^+(n^-))\to H[cn]
\]
be the composition given as follows:
\begin{center}
    \begin{tikzcd}
       & H(\frk{c}\frk{n}^+(n^-))\times H(\overline{\frk{c}}\overline{\frk{n}}^+(n^-))\cong H(\frk{c}\frk{n}^+)\times H(n^-)\times H(\overline{\frk{c}}\overline{\frk{n}}^+)\times H(n^-)\arrow[d, "(\dagger)"]\\
       & H(cn)\cong H(cn^+)\times H(n^-)\arrow[d, twoheadrightarrow]\\
       &H[cn]
    \end{tikzcd}

\end{center}
where $(\dagger)$ sends a $4$-tuple $(x,a,y,b)$ to $(\tau_{cn^+}(x,y),(ab)^{1/2})$ (note that since we assume $p\geq 5$ this is well-defined) and the second arrow is the canonical projection.

We also let
\[
\sigma^{\mathbf{c}}_{cn}:H(\frk{c}\frk{n}^+(n^-))\times H(\frk{c}\frk{n}^+(n^-))\to H[cn]
\]
be given by $\sigma_{cn}\circ (1,\mathbf{c})$ where $\mathbf{c}:H(\frk{c}\frk{n}^+(n^-))\to H(\overline{\frk{c}}\overline{\frk{n}}^+(n^-))$ is given by the action of complex conjugation.

\begin{nota}
For $n\in\Z_{\geq 1}$ as above, we write
\[
\tilde{n}:=n^+\cdot (n^-)^2=N_{K/\Q}(\frk{n}^+(n^-))=N_{K/\Q}(\overline{\frk{n}}^+(n^-))
\]
and
\[
\frk{m}:=\frk{c}\frk{n}^+(n^-)\qquad \overline{\frk{m}}:=\overline{\frk{c}}\overline{\frk{n}}^+(n^-).
\]     
\end{nota}

\begin{lemma}\label{lemma:hecketorcg}
There is a unique morphism $\underline{\phi}_n:\bb{T}'_{l,m}(\tilde{n})\to\cl{O}[H[cn]] $ of $\Z_p$-algebras making the following diagram commutative
\begin{center}
\begin{tikzcd}[column sep = large]
& \bb{T}'_l(\tilde{n})\otimes_{\Z_p}\bb{T}'_m(\tilde{n})\arrow[r, "\phi_{1,\frk{n}}\otimes\phi_{2,\overline{\frk{n}}}"]\arrow[d,twoheadrightarrow] & \cl{O}[H(\frk{m})]\otimes_{\cl{O}} \cl{O}[H(\overline{\frk{m}})]\arrow[r,"\sigma_{cn}"] &\cl{O}[H[cn]]\\
& \bb{T}'_{l,m}(\tilde{n})\arrow[urr, bend right = 10, dashed, "\underline{\phi}_n"]
\end{tikzcd}
\end{center}
Similarly, there is a unique morphism $\underline{\phi}^{\mathbf{c}}_n:\bb{T}'_{l,m}(\tilde{n})\to\cl{O}[H[cn]]$ such the analogue of the above diagram with $\phi_{2,\overline{\frk{n}}}$ replaced by $\phi_{2,\frk{n}}$ and $\sigma_{cn}$ replaced by $\sigma^{\mathbf{c}}_{cn}$ is commutative.
\begin{proof}
From the construction and Assumption \ref{ass:selfdual}, it follows that
\[
\sigma_{cn}\circ (\phi_{1,\mathfrak{n}}\otimes\phi_{2,\mathfrak{\overline{n}}}) (\langle d\rangle'\otimes \langle d \rangle ')=1
\]
for every $\langle d\rangle'\in\Delta_n$. The required factorizations follow immediately.
\end{proof}
\end{lemma}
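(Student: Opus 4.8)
The plan is to produce both morphisms directly from the universal property of the quotient defining $\bb{T}'_{l,m}(\tilde{n})$. Recall that, as a ring, $\bb{T}'_{l,m}(\tilde{n})$ is the quotient of $\bb{T}'_l(\tilde{n})\otimes_{\Z_p}\bb{T}'_m(\tilde{n})$ by the ideal generated by the elements $\langle d\rangle'\otimes\langle d\rangle'-1\otimes 1$ with $\langle d\rangle'\in\Delta_{\tilde{n}}$ (equivalently, $d\equiv 1\bmod M$), and that the left vertical arrow in the diagram is exactly this quotient map. Since that map is surjective, a ring homomorphism out of $\bb{T}'_{l,m}(\tilde{n})$ is determined by its precomposition with it, so \emph{uniqueness} of $\underline{\phi}_n$ and $\underline{\phi}^{\mathbf{c}}_n$ is automatic. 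The content is therefore to check that $\sigma_{cn}\circ(\phi_{1,\frk{n}}\otimes\phi_{2,\overline{\frk{n}}})$ and $\sigma^{\mathbf{c}}_{cn}\circ(\phi_{1,\frk{n}}\otimes\phi_{2,\frk{n}})$ each send $\langle d\rangle'\otimes\langle d\rangle'$ to $1$ for every such $d$.

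For this I would first evaluate the relevant homomorphisms on diamond operators using Proposition \ref{prop:LLZ}: one has $\phi_{1,\frk{n}}(\langle d\rangle')=\chi_{\psi_1}(d)[(d)]$, $\phi_{2,\overline{\frk{n}}}(\langle d\rangle')=\chi_{\psi_2}(d)[(d)]$ and $\phi_{2,\frk{n}}(\langle d\rangle')=\chi_{\psi_2}(d)[(d)]$, where $[(d)]$ is the class of the principal ideal $d\cl{O}_K$ in the appropriate ray class group. The scalar $\chi_{\psi_1}(d)\chi_{\psi_2}(d)$ equals $1$ since $d\equiv 1\bmod M$ and $\chi_{\psi_1}\chi_{\psi_2}$ is trivial modulo $M$ by Assumption \ref{ass:selfdual}; hence everything comes down to showing $\sigma_{cn}([(d)]\otimes[(d)])=1$ in $\cl{O}[H[cn]]$ (and the analogous statement for $\sigma^{\mathbf{c}}_{cn}$).

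This last identity I would obtain by chasing $[(d)]\otimes[(d)]$ through the three layers of the diagram defining $\sigma_{cn}$. The canonical decompositions $H(\frk{m})\cong H(\frk{c}\frk{n}^+)\times H(n^-)$, $H(\overline{\frk{m}})\cong H(\overline{\frk{c}}\overline{\frk{n}}^+)\times H(n^-)$, $H(cn)\cong H(cn^+)\times H(n^-)$ and $H(cn^+)\cong H(\frk{c}\frk{n}^+)\times H(\overline{\frk{c}}\overline{\frk{n}}^+)$ are all induced by reduction of the modulus, hence isomorphisms on maximal $p$-quotients since $p\geq 5$ and $p\nmid h_K$, so they carry the class of $(d)$ to the tuple whose coordinates are again the class of $(d)$. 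Feeding the $4$-tuple $([(d)],[(d)],[(d)],[(d)])$ into $(\dagger)$ produces $(\tau_{cn^+}([(d)],[(d)]),([(d)]\cdot[(d)])^{1/2})$; the second coordinate is just $[(d)]\in H(n^-)$ as $p\neq 2$, and the first is $[(d)]\in H(cn^+)$ because $\tau_{cn^+}$ inverts the reduction isomorphism. Reassembling, we land on the class of $(d)$ in $H(cn)$, and the final projection $H(cn)\twoheadrightarrow H[cn]$ annihilates it: the ring class group of conductor $cn$ is $H(cn)$ modulo the classes of ideals admitting a rational-integer generator, and $(d)$ with $d\in\Z$ is such an ideal. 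Thus $\sigma_{cn}([(d)]\otimes[(d)])=1$ and the factorization $\underline{\phi}_n$ exists. For $\underline{\phi}^{\mathbf{c}}_n$ the only change is that one precomposes the first layer with $\mathbf{c}\colon H(\frk{m})\to H(\overline{\frk{m}})$, but $\mathbf{c}$ fixes $[(d)]$ since $\overline{(d)}=(d)$ for $d\in\Z$, so the same computation applies.

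I expect the third paragraph to be where the (modest) work concentrates: one must verify honestly that the identifications appearing in the definition of $\sigma_{cn}$ are the reduction-of-modulus maps and are isomorphisms on $p$-parts under the running hypotheses on $p$, and one must use the rational-integrality of $d$ precisely to conclude that $(d)$ becomes trivial in the ring class group $H[cn]$. Everything else is formal bookkeeping with the universal property.
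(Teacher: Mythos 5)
Your proof is correct and follows exactly the route the paper sketches: reduce to checking that $\sigma_{cn}\circ(\phi_{1,\frk{n}}\otimes\phi_{2,\overline{\frk{n}}})$ kills $\langle d\rangle'\otimes\langle d\rangle'-1$, use Proposition~\ref{prop:LLZ} to rewrite this in terms of $\chi_{\psi_1}(d)\chi_{\psi_2}(d)[(d)]\otimes[(d)]$, invoke Assumption~\ref{ass:selfdual} for the scalar, and observe that $[(d)]$ becomes trivial after the projection to the ring class group $H[cn]$. The paper compresses all of this into the single sentence of its proof; you have merely written out the bookkeeping with the decompositions underlying $\sigma_{cn}$ and $\sigma^{\mathbf{c}}_{cn}$, and your remarks about $p\nmid h_K$, $p\neq 2$, and the principal ideal $(d)$ with rational generator dying in $H[cn]$ are exactly the points that make the paper's ``it follows from the construction'' legitimate.
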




\begin{nota}
We fix the following notations:
\[
\begin{aligned}
& H^1(\psi_1,\frk{m}):=H^1_{\et}(Y_1(M\tilde{n})_{\overline{\bb{Q}}},\Ls_{l-2}(1))\otimes_{(\bb{T}_l'(M\tilde{n}),\phi_{1,\frk{m}})} \cl{O}[H(\frk{m})]\quad\text{and similarly }H^1(\psi_1,\overline{\frk{m}});\\
& H^1(\psi_2,\frk{m}):=H^1_{\et}(Y_1(M\tilde{n})_{\overline{\bb{Q}}},\Ls_{m-2}(1))\otimes_{(\bb{T}_m'(M\tilde{n}),\phi_{2,\frk{m}})} \cl{O}[H(\frk{m})]\quad\text{and similarly }H^1(\psi_2,\overline{\frk{m}});\\
& H^1(\psi_1,\psi_2,n):=\\
&\big(H^1_{\et}(Y(M\tilde{n})_{\bar{\Q}},\Ls_{l-2}(1))\otimes_{\Z_p}H^1_{\et}(Y(M\tilde{n})_{\bar{\Q}},\Ls_{m-2}(1))\big)_{\Delta_n} \otimes_{(\bb{T}'_{l,m}(\tilde{n}),\underline{\phi}_n)}\cl{O}[H[cn]]\otimes_{\cl{O}}\cl{O}(\tfrac{2-l-m}{2});\\
& H^1(\psi_1,\psi_2^{\mathbf{c}},n):=\\
&\big(H^1_{\et}(Y(M\tilde{n})_{\bar{\Q}},\Ls_{l-2}(1))\otimes_{\Z_p}H^1_{\et}(Y(M\tilde{n})_{\bar{\Q}},\Ls_{m-2}(1))\big)_{\Delta_n} \otimes_{(\bb{T}'_{l,m}(\tilde{n}),\underline{\phi}_n^{\mathbf{c}})}\cl{O}[H[cn]]\otimes_{\cl{O}}\cl{O}(\tfrac{2-l-m}{2}).
\end{aligned}
\]
\end{nota}

\begin{rmk}
\label{rmk: decompGalois}
Observe that as $G_\Q$-representations
\[
\Ind_{K}^\bb{Q}\cl{O}(\hat{\psi}_1^{-1})\otimes_{\cl{O}}\Ind_{K}^\bb{Q}\cl{O}(\hat{\psi}_2^{-1})\cong \Ind_{K}^\bb{Q}\cl{O}(\widehat{\psi_1\psi_2}^{-1})\oplus \Ind_{K}^\bb{Q}\cl{O}(\widehat{\psi_1\psi_2^c}^{-1})
\]
Note moreover that the $p$-adic Galois characters attached to (the $p$-adic avatars of) $\eta_1:=\psi_1\psi_2\vert\cdot\vert_{\bb{A}_K^\times}^{(l+m-2)/2}$ and $\eta_2:=\psi_1\psi_2^c\vert\cdot\vert_{\bb{A}_K^\times}^{(l+m-2)/2}$ factor through $H[cp^\infty]$ (actually, if $l=m$, $\eta_2$ is even a ring class character, factoring through $H[c]$). Hence we get a natural projection (compatible with the morphism $\sigma_{cn}$)
\begin{equation}
\label{projdecomp12}
\Ind_{K(\frk{m})}^\bb{Q}\cl{O}(\hat{\psi}_1^{-1})\otimes_{\cl{O}}\Ind_{K(\overline{\frk{m}})}^\bb{Q}\cl{O}(\hat{\psi}_2^{-1})\otimes_{\cl{O}}\cl{O}(\tfrac{2-l-m}{2})\twoheadrightarrow \Ind_{K[cn]}^\bb{Q}\cl{O}(\hat{\eta}_1^{-1}),
\end{equation}
as well as a natural projection (compatible with the morphism $\sigma^{\mathbf{c}}_{cn}$)
\begin{equation}
\label{projdecomp12c}
\Ind_{K(\frk{m})}^\bb{Q}\cl{O}(\hat{\psi}_1^{-1})\otimes_{\cl{O}}\Ind_{K(\frk{m})}^\bb{Q}\cl{O}(\hat{\psi}_2^{-1})\otimes_{\cl{O}}\cl{O}(\tfrac{2-l-m}{2})\twoheadrightarrow \Ind_{K[cn]}^\bb{Q}\cl{O}(\hat{\eta}_2^{-1}).
\end{equation}
\end{rmk}

\begin{corollary}\label{cor:LLZ}
Let $\cl{B}_M$ be the set of positive integers coprime to $pM$. Then for every $n\in\cl{B}$ there is a unique $G_\bb{Q}$-equivariant surjective morphisms of $\cl{O}[H[cn]]$-modules 
\[
\underline{\nu}_n: H^1(\psi_1,\psi_2,n)\twoheadrightarrow \Ind_{K[cn]}^\bb{Q}\cl{O}(\hat{\eta}_1^{-1})
\]
making the following diagram commutative.
\begin{center}
\begin{tikzcd}
&\big(H^1(\psi_1,\frk{m})\otimes_{\cl{O}}H^1(\psi_2,\overline{\frk{m}})\big)(\tfrac{2-l-m}{2})\arrow[r,  "\nu_1\otimes\nu_2", "\cong"']\arrow[d] &\big(\Ind_{K(\frk{m})}^\bb{Q}\cl{O}(\hat{\psi}_1^{-1})\otimes_{\cl{O}}\Ind_{K(\overline{\frk{m}})}^\bb{Q}\cl{O}(\hat{\psi}_2^{-1})\big)(\tfrac{2-l-m}{2})\arrow[d,twoheadrightarrow, "\eqref{projdecomp12}"]\\
&H^1(\psi_1,\psi_2,n)\arrow[r, dashed, twoheadrightarrow, "\underline{\nu}_{n}"] &\Ind_{K[cn]}^\bb{Q}\cl{O}(\hat{\eta}_1^{-1})
\end{tikzcd}
\end{center}
The left vertical arrow in the above diagram is induced by the natural quotient map.

Moreover, whenever $n\mid n'$ for elements of $\cl{B}_M$, the diagram
\begin{center}
\begin{tikzcd}
& H^1(\psi_1,\psi_2,n')\arrow[r, twoheadrightarrow, "\underline{\nu}_{n'}"]\arrow[d, "\cl{N}_n^{n'}"] &\Ind_{K[cn']}^\bb{Q}\cl{O}(\hat{\eta}_1^{-1}) \arrow[d, "\mathrm{Norm}^{K[cn']}_{K[cn]}"]\\
&H^1(\psi_1,\psi_2,n)\arrow[r, twoheadrightarrow, "\underline{\nu}_n"] &\Ind_{K[cn]}^\bb{Q}\cl{O}(\hat{\eta}_1^{-1})
\end{tikzcd}
\end{center}
commutes, where $\cl{N}_{n}^{n'}$ is induced by  $\big(\cl{N}_{\frk{m}}^{\frk{m}'}\otimes 1 \big)\otimes\big(\cl{N}_{\overline{\frk{m}}}^{\overline{\frk{m}'}}\otimes 1 \big)$.

Similarly, we get unique $G_\bb{Q}$-equivariant surjective morphisms of $\cl{O}[H[cn]]$-modules 
\[
\underline{\nu}^\mathbf{c}_n: H^1(\psi_1,\psi^\mathbf{c}_2,n)\twoheadrightarrow \Ind_{K[cn]}^\bb{Q}\cl{O}(\hat{\eta}_2^{-1})
\]
which make the analogue diagrams commutative, i.e.,
\begin{center}
\begin{tikzcd}
&\big(H^1(\psi_1,\frk{m})\otimes_{\cl{O}}H^1(\psi_2,\frk{m})\big)(-1)\arrow[r,  "\nu_1\otimes\nu^\mathbf{c}_2", "\cong"']\arrow[d] &\big(\Ind_{K(\frk{m})}^\bb{Q}\cl{O}(\hat{\psi}_1^{-1})\otimes_{\cl{O}}\Ind_{K(\frk{m})}^\bb{Q}\cl{O}(\hat{\psi}_2^{-1})\big)(-1)\arrow[d,twoheadrightarrow, "\eqref{projdecomp12c}"]\\
&H^1(\psi_1,\psi^\mathbf{c}_2,n)\arrow[r, dashed, twoheadrightarrow, "\underline{\nu}^\mathbf{c}_{n}"] &\Ind_{K[cn]}^\bb{Q}\cl{O}(\hat{\eta}_2^{-1})
\end{tikzcd}
\end{center}
and, whenever $n\mid n'$ for elements of $\cl{B}_M$,
\begin{center}
\begin{tikzcd}
& H^1(\psi_1,\psi^\mathbf{c}_2,n')\arrow[r, twoheadrightarrow, "\underline{\nu}^\mathbf{c}_{n'}"]\arrow[d, "^\mathbf{c}\cl{N}_n^{n'}"] &\Ind_{K[cn']}^\bb{Q}\cl{O}(\hat{\eta}_2^{-1}) \arrow[d, "\mathrm{Norm}^{K[cn']}_{K[cn]}"]\\
&H^1(\psi_1,\psi_2^\mathbf{c},n)\arrow[r, twoheadrightarrow, "\underline{\nu}^\mathbf{c}_n"] &\Ind_{K[cn]}^\bb{Q}\cl{O}(\hat{\eta}_2^{-1})
\end{tikzcd}
\end{center}
with $^\mathbf{c}\cl{N}_n^{n'}$ induced by  $\big(\cl{N}_{\frk{m}}^{\frk{m}'}\otimes 1\big)^{\otimes 2}$.
\end{corollary}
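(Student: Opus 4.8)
The plan is to deduce the statement by assembling three ingredients already at our disposal: Theorem~\ref{thm:LLZ} (applied separately to $\psi_1$ and to $\psi_2$), Lemma~\ref{lemma:hecketorcg}, and the $G_{\bb Q}$-equivariant projections \eqref{projdecomp12} and \eqref{projdecomp12c} of Remark~\ref{rmk: decompGalois}. First I would invoke Theorem~\ref{thm:LLZ} for the character $\psi_1$ with the ideal $\frk m$, and for $\psi_2$ with the ideal $\overline{\frk m}$ — the relevant ideals lying in the index sets of that theorem by the running hypotheses (in particular $n$ coprime to $pM$ and $p>\max\{5,l+1\}$, which take care of the coprimality to $p$ and to the relevant factorials) — to obtain the $G_{\bb Q}$-equivariant isomorphisms $\nu_1\colon H^1(\psi_1,\frk m)\xrightarrow{\cong}\Ind_{K(\frk m)}^{\bb Q}\cl O(\hat{\psi}_1^{-1})$ and $\nu_2\colon H^1(\psi_2,\overline{\frk m})\xrightarrow{\cong}\Ind_{K(\overline{\frk m})}^{\bb Q}\cl O(\hat{\psi}_2^{-1})$, compatibly with the module structures over $\cl O[H(\frk m)]$ and $\cl O[H(\overline{\frk m})]$. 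Tensoring over $\cl O$, applying the Tate twist by $\cl O(\tfrac{2-l-m}{2})$, and post-composing with the projection \eqref{projdecomp12} then yields a $G_{\bb Q}$-equivariant surjection
\[
\big(H^1(\psi_1,\frk m)\otimes_{\cl O} H^1(\psi_2,\overline{\frk m})\big)(\tfrac{2-l-m}{2})\longrightarrow \Ind_{K[cn]}^{\bb Q}\cl O(\hat{\eta}_1^{-1}),
\]
which is precisely the composite of the top row and the right column of the first diagram.

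The heart of the argument is then to show that this composite descends uniquely along the left vertical arrow, i.e.\ factors through $H^1(\psi_1,\psi_2,n)$. Unwinding the tensor products, $H^1(\psi_1,\frk m)\otimes_{\cl O}H^1(\psi_2,\overline{\frk m})$ is canonically the extension of scalars of $H^1_{\et}(Y(M\tilde n)_{\overline{\bb Q}},\Ls_{l-2}(1))\otimes_{\Z_p}H^1_{\et}(Y(M\tilde n)_{\overline{\bb Q}},\Ls_{m-2}(1))$ along $\phi_{1,\frk m}\otimes\phi_{2,\overline{\frk m}}$ with target $\cl O[H(\frk m)]\otimes_{\cl O}\cl O[H(\overline{\frk m})]$, while $H^1(\psi_1,\psi_2,n)$ is (after the Tate twist) the extension of scalars of the same module along $\underline{\phi}_n$ with target $\cl O[H[cn]]$; since by Lemma~\ref{lemma:hecketorcg} the map $\sigma_{cn}\circ(\phi_{1,\frk m}\otimes\phi_{2,\overline{\frk m}})$ coincides with $\underline{\phi}_n$ composed with the quotient $\bb T'_l(\tilde n)\otimes\bb T'_m(\tilde n)\twoheadrightarrow\bb T'_{l,m}(\tilde n)$, the left vertical arrow is exactly base change along the ring homomorphism $\sigma_{cn}\colon\cl O[H(\frk m)]\otimes_{\cl O}\cl O[H(\overline{\frk m})]\to\cl O[H[cn]]$. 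This $\sigma_{cn}$ is surjective (because the auxiliary map labelled $(\dagger)$ and the canonical projection $H(cn)\twoheadrightarrow H[cn]$ in its definition are, using $p\neq 2$ for the half-power), so the left vertical arrow is surjective and uniqueness of $\underline{\nu}_n$ is automatic. For existence one observes that the composite of the first paragraph is linear over $\bb T'_l(M\tilde n)\otimes\bb T'_m(M\tilde n)$ with target carrying its $\cl O[H[cn]]$-module structure (by the identity of maps just quoted) and that \eqref{projdecomp12} is compatible with $\sigma_{cn}$; hence it kills the kernel of base change along $\sigma_{cn}$ and factors through $H^1(\psi_1,\psi_2,n)$, defining $\underline{\nu}_n$. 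Surjectivity of $\underline{\nu}_n$ and its $G_{\bb Q}$-equivariance follow formally from the commutativity of the square, the surjectivity of the left vertical arrow, and the $G_{\bb Q}$-equivariance and surjectivity of the remaining maps.

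For the norm compatibility I would fix $n\mid n'$ in $\cl B_M$ and check commutativity of the norm square after precomposing with the surjective left vertical arrow $\big(H^1(\psi_1,\frk m')\otimes_{\cl O}H^1(\psi_2,\overline{\frk m'})\big)(\tfrac{2-l-m}{2})\twoheadrightarrow H^1(\psi_1,\psi_2,n')$, which is harmless by surjectivity. Since $\cl{N}_n^{n'}$ is by definition induced by $\big(\cl{N}_{\frk{m}}^{\frk{m}'}\otimes 1 \big)\otimes\big(\cl{N}_{\overline{\frk{m}}}^{\overline{\frk{m'}}}\otimes 1 \big)$, the problem then splits into (a) the commutativity of the norm square of Theorem~\ref{thm:LLZ} for $\nu_1$ and for $\nu_2$, tensored together and Tate-twisted, and (b) the compatibility of the projections \eqref{projdecomp12} for $n$ and $n'$ with the corestriction maps on the two sides; the latter holds because the splitting of $\Ind_K^{\bb Q}\cl O(\hat{\psi}_1^{-1})\otimes\Ind_K^{\bb Q}\cl O(\hat{\psi}_2^{-1})$ into its $\hat{\eta}_1$- and $\hat{\eta}_2$-isotypic summands is $G_{\bb Q}$-canonical and corestriction respects it. The construction of $\underline{\nu}^{\mathbf c}_n$ and the verification of the analogous diagrams are entirely parallel, now applying Theorem~\ref{thm:LLZ} to $\psi_2$ with the ideal $\frk m$ (rather than $\overline{\frk m}$) and substituting $\sigma^{\mathbf c}_{cn}$, $\underline{\phi}^{\mathbf c}_n$, \eqref{projdecomp12c} and ${}^{\mathbf{c}}\cl{N}_n^{n'}$ for $\sigma_{cn}$, $\underline{\phi}_n$, \eqref{projdecomp12} and $\cl{N}_n^{n'}$.

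I expect the only genuinely delicate point to be the identification in the second paragraph of the left vertical arrow with base change along $\sigma_{cn}$, together with the simultaneous verification that the composition of $\nu_1\otimes\nu_2$ with \eqref{projdecomp12} is linear over $\bb T'_l(M\tilde n)\otimes\bb T'_m(M\tilde n)$ \emph{and} compatible with $\sigma_{cn}$, so that it descends; once this bookkeeping is set up the remainder is formal, all the substantial input being already packaged in Theorem~\ref{thm:LLZ}.
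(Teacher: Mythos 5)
Your argument is correct and is precisely an elaboration of the paper's (very terse) proof: the paper likewise asserts that the factorization "follows from the construction" — which you unpack via the universal property of base change along the surjection $\sigma_{cn}$, using Lemma~\ref{lemma:hecketorcg} — and deduces the norm compatibility from Theorem~\ref{thm:LLZ}, the compatibility of corestriction with the canonical splitting, and uniqueness (equivalently, surjectivity of the left vertical arrow), exactly as you do. The approaches match.
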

\begin{proof}
It is easy to see that the required factorization follows from the construction. The commutativity of the second diagram (in each case) follows directly now from Theorem~\ref{thm:LLZ}, the compatibility between corestriction morphisms and the uniqueness property of $\underline{\nu}_n$.
\end{proof}

\subsection{Improved diagonal classes}
\label{subsec: improved diagonal classes}
Let $\mathbf{r}=(r_1,r_2,r_3)\in(\Z_{\geq 0})^3$ be a triple of non-negative integers which is balanced, i.e., $r_a<r_b+r_c$ for any permutation $\{a,b,c\}$ of $\{1,2,3\}$. 

\medskip
Following \cite[Section 3]{BSVast1}, one can construct diagonal classes 
\[
\kappa_{N(n),\mathbf{r}}\in H^1\big(\Q, H^3_{\et}(Y^3_{\bar{\Q}},\Ls_{[\mathbf{r}]}(2-r))\big)\]
for some $N\geq 5$, $n\geq 1$, where $Y=Y(1,N(n))_\Q$.

\medskip
Here is a diagram depicting the situation. 

\begin{center}
    \begin{tikzcd}
   & \DET_{N(n),\mathbf{r}}^{\et}\in H^0_{\et}(Y,\Ss_\mathbf{r}(r))\arrow[d, bend right = 90, mapsto]\arrow[r, "d_{*}"]\arrow[dr, "AJ_{\et}"]& H^4_{\et}(Y^3,\Ss_{[\mathbf{r}]}(r+2))
    \arrow[d, "\mathrm{HS}"] \\
    & \kappa_{N(n),\mathbf{r}}\ni H^1\big(\Q, H^3_{\et}(Y^3_{\bar{\Q}},\Ls_{[\mathbf{r}]}(2-r))_{\Q_p}\big)
    & H^1\big(\Q, H^3_{\et}(Y^3_{\bar{\Q}},\Ss_{[\mathbf{r}]}(r+2))_{\Q_p}\big)\arrow[l, "s_\mathbf{r}", "\cong"']
\end{tikzcd}
\end{center}

\begin{rmk}
\label{AJetrmk}
The following discussion explains the diagram above.
\begin{enumerate}[(i)]
    \item $d: Y\hookrightarrow Y\times Y\times Y$ is the diagonal embedding and $d_{*}$ is the corresponding Gysin map.
    \item For $\mathscr{F}\in\{\Ss,\Ls\}$ and $\mathbf{r}=(r_1,r_2,r_3)$ as above, the $\Z_p$-local system $\mathscr{F}_\mathbf{r}$ on $Y$ is defined as
    \[    \mathscr{F}_\mathbf{r}:=\mathscr{F}_{r_1}\otimes_{\Z_p}\mathscr{F}_{r_2}\otimes_{\Z_p}\mathscr{F}_{r_3}
    \]
    and the $\Z_p$-local system $\mathscr{F}_{[\mathbf{r}]}$ on $Y^3$ is defined as
    \[        \mathscr{F}_{[\mathbf{r}]}:=p_1^*\mathscr{F}_{r_1}\otimes_{\Z_p}p_2^*\mathscr{F}_{r_2}\otimes_{\Z_p} p_3^*\mathscr{F}_{r_3}\,,
    \]
    where $p_j: Y\times Y\times Y\to Y$ is the natural projection on the $j$-th factor. In particular, it follows that $d^*\mathscr{F}_{[\mathbf{r}]}=\mathscr{F}_\mathbf{r}$.
    \item The map $\mathrm{HS}$ is a morphism coming from the Hochschild-Serre spectral sequence
    \begin{equation}
    \label{HSspectral}
        H^i(\Q,H^j_{\et}(Y_{\bar{\Q}}^3,\Ss_{[\mathbf{r}]}(r+2)))\Rightarrow H^{i+j}_{\et}(Y^3, \Ss_{[\mathbf{r}]}(r+2))\,.
    \end{equation}
    The existence of such a morphism is due to the fact that $H^4_{\et}(Y_{\bar{\Q}}^3,\Ss_{[\mathbf{r}]}(r+2))=0$ (by Artin vanishing theorem, as $Y$ is an affine curve).
    \item The \'{e}tale Abel-Jacobi map in this setting is defined as $AJ_{\et}:=d_{*}\circ\mathrm{HS}$.
    \item The isomorphism $s_\mathbf{r}$ is induced by the isomorphisms $s_{r_j}$ for $j\in\{1,2,3\}$ (cf. \eqref{isosr} above).
\end{enumerate}
\end{rmk}

\medskip
The class $\kappa_{N(n),\mathbf{r}}$ is the image under the composition $s_{\mathbf{r}}\circ AJ_{\et}$ of an element
    \begin{equation}
    \label{detrdef}
        \DET_{N(n),\mathbf{r}}^{\et}\in H^0_{\et}(Y,\Ss_\mathbf{r}(r)),
    \end{equation}
constructed using classical invariant theory (cf. the discussion in \cite[pp. 94-95]{BSVast1}). When $n=1$, we simply write $\kappa_{N,\mathbf{r}}=\kappa_{N(1),\mathbf{r}}$.

\medskip
Fix now integers $N\geq 5$ and $n\geq 1$ and let $W_n:=\big(Y_1(Nn)\times Y_1(Nn)\big)/\Delta_{n}$ (quotient taken with respect to the diagonal (and free) action of $\Delta_{n}$ on $Y_1(Nn)\times Y_1(Nn)$) and denote $q_n: Y_1(Nn)\times Y_1(Nn)\to W_n$ the quotient map. Note that there is a well-defined \emph{diagonal} closed immersion $\iota_n:Y_1(M(n))\hookrightarrow W_n$ with projections $q_{i,n}: W_n\to Y_1(M(n))$ for $i=2,3$.
Thanks to the isomorphisms \eqref{eq:iso-q}, for $\Fs\in\{\Ss,\Ls\}$ there is a natural isomorphism 
\[
\Fs_{1,Mn,r_2}\boxtimes \Fs_{1,Mn,r_3}:=p_{2,n}^*\Fs_{1,Mn,r_2}\otimes p_{3,n}^*\Fs_{1,Mn,r_3}\cong q_n^*(\Fs_{r_2,r_3}^{\Delta_n})
\]
where we write
\[
\Fs_{l-2,m-2}^{\Delta_n}:=q_{2,n}^*\Fs_{1,M(n),r_2}\otimes q_{3,n}^*\Fs_{1,M(n),r_3}.
\]
and $p_{i,n}:Y_1(Nn)^3\to Y_1(Nn)$ for $i\in\{1,2,3\}$ are the natural projections.

\medskip
Hence, there is an Hochschild-Serre spectral sequence (here $j\in\Z$ is any integer) 
\[
E_2^{p,q}=H^p\big(\Delta_n, H^q_{\etcc}(Y_1(Nn)_{\bar{\Q}}^2,\Ss_{r_2}\boxtimes\Ss_{r_3}(j))_{\Q_p}\big)\Rightarrow H^{p+q}_{\etcc}(W_{n,\bar{\Q}},\Ss_{r_2,r_3}^{\Delta_n}(j))_{\Q_p}
\]

Since $E_2^{p,q}=0$ for $q\leq 1$ (thanks to Artin vanishing and Poincar\'{e} duality), one can use this spectral sequence to show that pullback by $q_n$ induces isomorphisms
\[
q_n^*:H^2_{\etcc}(W_{n,\bar{\Q}},\Ss_{r_2,r_3}^{\Delta_n}(j))_{\Q_p}\xrightarrow{\cong} H^2_{\etcc}(Y_1(Nn)_{\bar{\Q}}^2,\Ss_{r_2}\boxtimes\Ss_{r_3}(j))^{\Delta_n}_{\Q_p}
\]
and by Poincar\'{e} duality we obtain isomorphisms 
\begin{equation}
\label{pushforwardiso}
q_{n,*}:H^2_{\et}(Y_1(Nn)_{\bar{\Q}}^2,\Ls_{l-2}\boxtimes\Ls_{m-2}(j))_{\Delta_n,\Q_p}\xrightarrow{\cong}H^2_{\et}(W_{n,\bar{\Q}},\Ls^{\Delta_n}_{l-2,m-2}(j))_{\Q_p}.    
\end{equation}

It is clear that one can apply the same machinery to the \emph{diagonal} embedding (obtained using $\iota_n$ on the second factor)
\[
d_{\Delta_n}:Y_1(N(n))\hookrightarrow Y_1(N(n))\times W_n
\]
to obtain classes
\[
\kappa_{N,\Delta_n,\mathbf{r}}\in H^1\big(\Q, H^3_{\et}(Z_{n,\bar{\Q}},\Ls^{\Delta_n}_{[\mathbf{r}]}(2-r))_{\Q_p}\big)
\]
where $Z_n= Y_1(N(n))\times W_n$ and, letting $p_{Y,n}:Z_n\to Y_1(N(n))$ and $p_{W,n}:Z_n\to W_n$ denote the obvious projections, 
\[
\Ls^{\Delta_n}_{[\mathbf{r}]}:=p_{Y,n}^*\Ls_{r_1}\otimes_{\Z_p}p_{W,n}^*\Ls^{\Delta_n}_{r_2,r_3}.
\]
We have a commutative diagram
\begin{center}
\begin{tikzcd}
    &Y_1(Nn) \arrow[r, "\mu_n"]\arrow[d, hookrightarrow, "d"] &Y_1(N(n)) \arrow[d, hookrightarrow, "d_{\Delta_n}"]\\
    &Y_1(Nn)^3 \arrow[r, "\text{$(\mu_n, q_n)$}"] &Z_n  
\end{tikzcd}
\end{center}
and, arguing in the same way as in the proof of \cite[Equation (174)]{BSVast1}, one has that 
\[
\mu_{n,*}(\DET_{Nn,\mathbf{r}})=\phi(n)\cdot \DET_{N(n),\mathbf{r}},    
\]
where $\phi(-)$ denotes Euler's $\phi$ function (note that $\phi(n)$ is the degree of the Galois cover $\mu_n$). It follows immediately (using the functoriality of Hochschild-Serre spectral sequences) that
\begin{equation}
\label{phinfactor}
(\mu_n,q_n)_*(\kappa_{Nn,\mathbf{r}})=\phi(n)\cdot \kappa_{N,\Delta_n,\mathbf{r}}.
\end{equation}
We can apply K\"{u}nneth decomposition (without having to invert $p$ thanks to our assumptions) and the isomorphisms \eqref{pushforwardiso}, starting from the classes $\kappa_{N,\Delta_n,\mathbf{r}}$ to produce classes
\[
\kappa^{(1)}_{n,\mathbf{r}}\in H^1\Big(\Q,H^1(Y_1(N(n))_{\bar{\Q}},\Ls_{r_1}(1))_{\Q_p}\otimes_{\Q_p}H_{n,r_2,r_3}(-1-r)\Big)
\]
where
\[
H_{n,r_2,r_3}=\big(H^1_{\et}(Y(Nn)_{\bar{\Q}},\Ls_{r_2}(1))_{\Q_p}\otimes_{\Q_p}H^1_{\et}(Y(Nn)_{\bar{\Q}},\Ls_{r_3}(1))_{\Q_p}\big)_{\Delta_n}
\]
\begin{rmk}
\label{phiinfactorrmk}
It follows directly from \eqref{phinfactor} that the image of the class $\kappa_{Nn,\mathbf{r}}$ in the above Galois cohomology group (obtained again applying K\"{u}nneth decomposition and projection on $\Delta_n$-coinvariants) equals $\phi(n)\cdot \kappa^{(1)}_{n,\mathbf{r}}$.    
\end{rmk}

We next consider the following modified diagonal classes. 
For each positive integer $n$ coprime to $p$ and $N$, let
\begin{equation*}
\kappa_{n,\mathbf{r}}^{(2)} := (\pi_2,1,1)_* \kappa_{n,\mathbf{r}}^{(1)}
\end{equation*}
lying in 
\[
H^1\Big(\Q,H^1(Y_1(N)_{\bar{\Q}},\Ls_{r_1}(1))_{\Q_p}\otimes_{\Q_p}H_{n,r_2,r_3}(-1-r)\Big)
\]
where $\pi_2:Y_1(N(n))\to Y_1(N)$ is defined as in \eqref{eq:pi12}.

\subsection{Proof of the tame norm relations}
\label{subsec:tameeulersystem}
We start recalling some results concerning the behaviour of diagonal classes under degeneracy maps.

\begin{lemma}\label{lemma:normrelations1}
Let $n$ be a positive integer and let $q$ be a prime number. Assume that both $n$ and $q$ are coprime to $N$. Write $r_q=q-1$ if $q\nmid n$ and $r_q=q$ if $q\mid n$. Then
\begin{align*}
&(\pr_q,\pr_1,\pr_1)_\ast \kappa_{Nnq,\mathbf{r}} = r_q(T_q,1,1)\kappa_{Nn,\mathbf{r}};\quad (\pr_1,\pr_q,\pr_q)_\ast \kappa_{Nnq,\mathbf{r}} = r_q \cdot q^{r-r_1}(T_q',1,1)\kappa_{Nn,\mathbf{r}}; \\
&(\pr_1,\pr_q,\pr_1)_\ast \kappa_{Nnq,\mathbf{r}} = r_q(1,T_q,1)\kappa_{Nn,\mathbf{r}};\quad (\pr_q,\pr_1,\pr_q)_\ast \kappa_{Nnq,\mathbf{r}} = r_q \cdot q^{r-r_2}(1,T_q',1)\kappa_{Nn,\mathbf{r}}; \\
&(\pr_1,\pr_1,\pr_q)_\ast \kappa_{Nnq,\mathbf{r}} = r_q (1,1,T_q)\kappa_{Nn,\mathbf{r}};\quad (\pr_q,\pr_q,\pr_1)_\ast \kappa_{Nnq,\mathbf{r}} =r_q \cdot q^{r-r_3}(1,1,T_q')\kappa_{Nn,\mathbf{r}}.
\end{align*}
If $q$ is coprime to $n$ we also have
\[
(\pr_1,\pr_1,\pr_1)_\ast \kappa_{Nnq,\mathbf{r}} = (q^2-1)\kappa_{Nn,\mathbf{r}};\quad (\pr_q,\pr_q,\pr_q)_\ast \kappa_{Nnq,\mathbf{r}} = (q^2-1)q^{r}\kappa_{Nn,\mathbf{r}};
\]
while if $q\mid n$ we have
\[
(\pr_1,\pr_1,\pr_1)_\ast \kappa_{Nnq,\mathbf{r}} = q^2\cdot\kappa_{Nn,\mathbf{r}};\quad (\pr_q,\pr_q,\pr_q)_\ast \kappa_{Nnq,\mathbf{r}} = q^{r+2}\kappa_{Nn,\mathbf{r}}.
\]
\begin{proof}
The same arguments proving equations (174) and (176) in \cite{BSVast1} yield these identities, adding the prime $q$ to the level rather than the prime $p$.
\end{proof}
\end{lemma}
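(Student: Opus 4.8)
The plan is to reduce everything to the behaviour of the elements $\DET_{Nn,\mathbf{r}}^{\et}\in H^0_{\et}(Y,\Ss_\mathbf{r}(r))$ under pushforward along the degeneracy maps, since the class $\kappa_{Nn,\mathbf{r}}$ is obtained from $\DET_{Nn,\mathbf{r}}^{\et}$ by applying $s_\mathbf{r}\circ AJ_{\et} = s_\mathbf{r}\circ d_*\circ\mathrm{HS}$, a construction which is functorial in the modular curve. First I would set up, for each of the three coordinates, the commutative square relating the diagonal embedding $d\colon Y_1(Nnq)\hookrightarrow Y_1(Nnq)^3$ to $d\colon Y_1(Nn)\hookrightarrow Y_1(Nn)^3$ via the product degeneracy maps $(\pr_{a},\pr_{b},\pr_{c})$ on the source and a single degeneracy map on the diagonal. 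Projection-formula compatibility for Gysin maps then turns $(\pr_a,\pr_b,\pr_c)_*\kappa_{Nnq,\mathbf{r}}$ into $s_\mathbf{r}\circ AJ_{\et}$ applied to the pushforward of $\DET_{Nnq,\mathbf{r}}^{\et}$ along the appropriate composite $Y_1(Nnq)\to Y_1(Nn)$, twisted by whichever of $\pr_1,\pr_q$ acts on the remaining factors — this is exactly the mechanism used for equations (174) and (176) of \cite{BSVast1}, now with the auxiliary prime $q$ playing the role their $p$ played.

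The next step is the local computation at $q$: one must identify $\mathrm{pr}_{1*}$ and $\mathrm{pr}_{q*}$ (and the double versions $\mathrm{pr}_{11*},\mathrm{pr}_{qq*}$ when $q\mid n$) on the sheaf-level invariant-theoretic symbol $\DET^{\et}$. Here the key inputs are that $\pr_{1*}\circ\pr_1^* $ is multiplication by the degree of $\pr_1$ (which is $q^2-1$ or $q^2$ according to whether $q\nmid n$ or $q\mid n$, since $\pr_1\colon Y_1(Nnq)\to Y_1(Nn)$ has degree $[\Gamma_1(Nn):\Gamma_1(Nnq)]$ adjusted by the $\pi_1$-vs-$\pi_2$ normalization), and that $\DET_{Nnq,\mathbf r}^{\et}$ is compatible with the pullback $\pr_1^*$ of $\DET_{Nn,\mathbf r}^{\et}$ up to these explicit constants $r_q=q-1$ or $q$. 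The Hecke operators $T_q,T_q'$ enter precisely as the compositions $\pr_{1*}\circ\pr_q^*$ (resp. their adjoints), and the powers $q^{r-r_i}$ come from the twist by $\Z_p(-r)$ appearing in the isomorphism $s_\mathbf{r}$ together with the weight-$r_i$ normalization of $\lambda_{q*}$ versus $\lambda_q^*$ on the $i$-th factor, exactly as in the BSV proof of (176) where the $q^{r-r_j}$ factors track the difference between $T_q$ and $T_q'$ on $\Ls_{r_j}$.

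The main obstacle I expect is bookkeeping the normalizations consistently across the three factors and the six (plus four) cases: one has to be careful that $\pr_q$ as defined here is the instance of $\pi_2$ (so it involves the quotient isogeny, hence a $\lambda_q$ and a $\varphi_q$), that the twist $(2-r)$ in $\Ls_{[\mathbf r]}(2-r)$ interacts with each factor's $q$-power weight, and that the mixed pushforwards like $(\pr_q,\pr_1,\pr_q)_*$ combine a $\pi_2$-type map on two coordinates with a $\pi_1$-type on one, producing $(1,T_q',1)$ with the correct $q^{r-r_2}$. Once the single-coordinate statements $(\pr_q,\pr_1,\pr_1)_*\kappa_{Nnq,\mathbf r}=r_q(T_q,1,1)\kappa_{Nn,\mathbf r}$ and $(\pr_1,\pr_1,\pr_1)_*\kappa_{Nnq,\mathbf r}=(\deg\pr_1)\kappa_{Nn,\mathbf r}$ are established, the remaining identities follow by symmetry in the three factors and by composing degeneracy maps, so I would state the first few cases in full and then indicate that the others are obtained \emph{mutatis mutandis}, citing the parallel structure of \cite[proof of (174), (176)]{BSVast1}.
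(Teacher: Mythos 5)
Your proposal follows exactly the approach the paper itself relies on: the paper's proof is a one-line citation to the arguments behind equations (174) and (176) of \cite{BSVast1} with $q$ replacing $p$, and your sketch correctly identifies the functoriality of $d_*\circ\mathrm{HS}$ and $s_{\mathbf{r}}$, the reduction to $\DET^{\et}$, the role of the degree of $\pr_1$ for the diagonal case, and the appearance of Hecke correspondences and twist exponents $q^{r-r_i}$ in the mixed cases.

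One point in your sketch is slightly misattributed and would actually derail the diagonal computation if taken literally. You write that "$\DET_{Nnq,\mathbf r}^{\et}$ is compatible with the pullback $\pr_1^*$ of $\DET_{Nn,\mathbf r}^{\et}$ up to these explicit constants $r_q=q-1$ or $q$." In fact the pullback compatibility is \emph{exact}: $\pr_1^*\DET_{Nn,\mathbf r}^{\et}=\DET_{Nnq,\mathbf r}^{\et}$, because $\pr_1$ does not change the universal elliptic curve, so the sheaves $\Ss_{r_i}$ and the invariant-theoretic section are literally pulled back. Combined with $\pr_{1*}\pr_1^*=\deg\pr_1$ this already gives the diagonal formula $(\pr_1,\pr_1,\pr_1)_*\kappa_{Nnq,\mathbf r}=(\deg\pr_1)\kappa_{Nn,\mathbf r}$; if $\DET$ compatibility carried an extra $r_q$ you would instead get $r_q\cdot(\deg\pr_1)$, contradicting the stated identity. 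The factor $r_q$ in the mixed cases arises from a different place: the map $(\pr_q,\pr_1)\colon Y_1(Nnq)\to Y_1(Nn)^2$ factors through the Hecke correspondence curve (of $\Gamma_1(Nn)\cap\Gamma_0(q)$-type) with a fibre of degree $r_q$, and $r_q$ is precisely this degree — the redundancy between $\Gamma_1(Nnq)$-level and $\Gamma_1(Nn)\cap\Gamma_0(q)$-level. With that correction the rest of your outline (factor the twisted diagonal embedding through $(1\times d_{Nn})\circ(\pr_q,\pr_1)$, apply the projection formula, track twists) is the BSV argument the paper invokes.
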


\begin{lemma}\label{lemma:normrelations2}
Let $n$ be as above and let $q$ be a rational prime coprime to $nN$. Write $s_q=\phi(q^2)=q(q-1)$. Then
\begin{align*}
&(\pr_{qq},\pr_{11},\pr_{11})_*( \kappa_{Nnq^2,\mathbf{r}})=s_q\big((T^2_q,1,1)-(q+1)q^{r_1}(\langle q\rangle,1,1)\big)\kappa_{Nn,\mathbf{r}}; \\
&(\pr_{qq},\pr_{11},\pr_{qq})_*( \kappa_{Nnq^2,\mathbf{r}})=s_q\big(q^{2r-2r_2}(1,T'^{2}_q,1)-(q+1)q^{2r}(1,\langle q\rangle',1)\big)\kappa_{Nn,\mathbf{r}}; \\
&(\pr_{qq},\pr_{qq},\pr_{11})_*( \kappa_{Nnq^2,\mathbf{r}})=s_q\big(q^{2r-2r_3}(1,1,T'^{2}_q)-(q+1)q^{2r}(1,1,\langle q\rangle')\big)\kappa_{Nn,\mathbf{r}}; \\
&(\pr_{qq},\pr_{qq},\pr_{qq})_*( \kappa_{Nnq^2,\mathbf{r}})=(q^2-1)q^{2r+2}\kappa_{Nn,\mathbf{r}}.
\end{align*}
\begin{proof}
The equalities follow applying the equations of Lemma \ref{lemma:normrelations1} twice and using the standard relations between Hecke operators and pushforward via degeneracy maps.
\end{proof}
\end{lemma}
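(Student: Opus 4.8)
The plan is to reduce the four claimed identities of Lemma \ref{lemma:normrelations2} to the first-level norm relations already established in Lemma \ref{lemma:normrelations1} by factoring each degeneracy map $\pr_{qq}$ or $\pr_{11}$ on $Y_1(Nnq^2)$ into a composition of two maps of the type $\pr_q$ or $\pr_1$ through the intermediate curve $Y_1(Nnq)$. Concretely, recall from the discussion following \eqref{eq:pi12} that $\pr_{11}$ (the instance of $\pi_1$) factors as $Y_1(Nnq^2)\to Y_1(Nnq)\to Y_1(Nn)$ with both arrows of $\pr_1$-type, while $\pr_{qq}$ (the instance of $\pi_2$) factors through the compositions involving $\mu$, $\varphi$ and $\check\nu$; concretely one checks that $\pr_{qq}=\pr_q\circ\pr_q'$ for appropriate instances on the two levels. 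First I would make these factorizations explicit in the triple setting $Y_1(Nnq^2)^3$, so that each of the pushforward operators $(\pr_{qq},\pr_{11},\pr_{11})_*$ etc.\ becomes a composition $(\cdots)_*\circ(\cdots)_*$ of two operators each of which is covered by Lemma \ref{lemma:normrelations1}.

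Next I would apply Lemma \ref{lemma:normrelations1} twice. For the first identity: applying it once to the prime $q$ added at level $Nnq$ (where $q\mid nq$, so the relevant multiplicity is $r_q=q$) gives $(\pr_q,\pr_1,\pr_1)_*\kappa_{Nnq^2,\mathbf r}=q\,(T_q,1,1)\kappa_{Nnq,\mathbf r}$ on the intermediate curve; applying it a second time (now $q\nmid n$, so $r_q=q-1$) gives $(\pr_q,\pr_1,\pr_1)_*\kappa_{Nnq,\mathbf r}=(q-1)(T_q,1,1)\kappa_{Nn,\mathbf r}$. Composing, and using that $T_q$ commutes with the degeneracy pushforwards in the expected way, yields a $q(q-1)=s_q$ factor and a naive $(T_q^2,1,1)$ term; the correction term $-(q+1)q^{r_1}(\langle q\rangle,1,1)$ comes from the standard relation expressing the composite degeneracy pushforward $\pr_{q*}\circ\pr_{q*}$ (as opposed to $\pr_{qq*}$ directly) in terms of $T_q^2$ and the diamond operator — i.e.\ the identity $\pr_{1*}\pr_{q*}+\pr_{q*}\pr_{1*}$ type relations at level $q^2$ versus level $q$. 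The same bookkeeping, permuting which of the three factors carries the $q^2$-level structure and inserting the appropriate twists $q^{r-r_i}$ and adjoint operators $T_q'$, produces the second, third and fourth identities; for the last one both sources of correction collapse and one is left with $(q^2-1)q^{2r+2}$, matching the fourth line of Lemma \ref{lemma:normrelations1} applied twice.

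The main obstacle I expect is purely organizational rather than conceptual: one must track carefully (a) the distinction between $\pr_{qq*}$ and the two-step composite $\pr_{q*}\circ\pr_{q*}$, which differ by exactly the $(q+1)q^{(\cdot)}\langle q\rangle$ terms and account for the entire ``non-naive'' part of the formulas; (b) the Tate twists, since passing through $Y_1(Nnq)$ and using the isomorphisms \eqref{eq:iso-q} and the $\varphi_r$, $\lambda_r$ maps introduces the powers $q^{r-r_i}$ and $q^{2r-2r_i}$ exactly as in the single-prime case of \cite{BSVast1}; and (c) the change from $r_q=q$ to $r_q=q-1$ between the two applications, since the first application adds $q$ to a level already divisible by $q$. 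Once these three bookkeeping points are pinned down, each identity follows by a mechanical substitution, exactly as the proof of equations (174)–(176) in \cite{BSVast1} proceeds, now with $q$ in place of $p$ and applied twice. I would present the argument for the first identity in detail and remark that the remaining three are obtained by the evident symmetry and the analogous computation with adjoint operators.
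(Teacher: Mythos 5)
Your overall plan—factor each level-$q^2$ degeneracy map through the intermediate curve $Y_1(Nnq)$, apply Lemma~\ref{lemma:normrelations1} twice, and invoke the Hecke/degeneracy commutation relations—is exactly the paper's route; the paper's proof is the one-line version of yours.

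One point in your write-up is, however, internally inconsistent and worth correcting. Early on you rightly observe that $\pr_{qq}$ is the composite $\pr_q\circ\pr_q'$ through $Y_1(Nnq)$ (and $\pr_{11}=\pr_1\circ\pr_1'$); both $\pr_{qq}$ and the two-step composite are multiplication by $q^2$ on $\mathcal H$ and quotient by the same cyclic $q^2$-subgroup on the moduli side, so $\pr_{qq*}=\pr_{q*}\circ\pr_{q'*}$ on the nose. But in point~(a) of your "obstacles" list you attribute the $(q+1)q^{(\cdot)}\langle q\rangle$ correction to a supposed difference between $\pr_{qq*}$ and $\pr_{q*}\circ\pr_{q*}$. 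There is no such difference. The correction term actually enters at the second application of Lemma~\ref{lemma:normrelations1}: the first application produces the operator $(T_q,1,1)$ \emph{at level $Nnq$}, where $q$ now divides the level, so this $T_q$ is a $U_q$-type operator; commuting it past the second-stage pushforward $(\pr_q,\pr_1,\pr_1)_*$ down to level $Nn$ requires the relation between $U_q$ at level $Nnq$ and the good Hecke operator $T_q$ at level $Nn$, and it is that relation which supplies the $-(q+1)q^{r_i}\langle q\rangle$ terms. This is precisely what the paper's phrase "standard relations between Hecke operators and pushforward via degeneracy maps" is pointing to. Once that attribution is fixed, the mechanical bookkeeping you describe (twists $q^{r-r_i}$, change $r_q=q$ to $r_q=q-1$, symmetry among the three factors) carries the argument through as in the paper.
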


\begin{lemma}\label{lemma:idealFrob}
Let $n$ be as above and let $q$ be a rational prime coprime to $pnN$. If $q$ splits in $K$ as $(q)=\frk{q}\overline{\frk{q}}$, the composition of $\sigma_{cn}:H(\frk{c}\frk{n}^+(n^-))\times H(\overline{\frk{c}}\overline{\frk{n}}^+(n^-))\to H[cn]$ with the (inverse of the) Artin map from class field theory (normalized geometrically) sends:
\begin{align*}
([(q)],[(q)])\mapsto 1,\quad &\quad ([\frk{q}],[\frk{q}])\mapsto \Frob_{\frk{q}}^{-1},\\
([\overline{\frk{q}}],[\overline{\frk{q}}])\mapsto \Frob_{\frk{q}},\quad&\quad ([\frk{q}],[\overline{\frk{q}}])\mapsto 1.
\end{align*}
Similarly $\sigma^\mathbf{c}_{cn}$ sends:
\begin{align*}
([(q)],[(q)])\mapsto 1,\quad &\quad ([\frk{q}],[\overline{\frk{q}}])\mapsto \Frob_{\frk{q}}^{-1},\\
([\overline{\frk{q}}],[\frk{q}])\mapsto \Frob_{\frk{q}},\quad&\quad ([\frk{q}],[\frk{q}])\mapsto 1.
\end{align*}
If $q$ is inert in $K$, we simply have $([(q)],[(q)])\mapsto 1$ for both $\sigma_{cn}$ and $\sigma^\mathbf{c}_{cn}$.
\begin{proof}
This is an easy computation following directly from the definition of $\sigma_{cn}$ and $\sigma^\mathbf{c}_{cn}$ and recalling that in $H[cn]$ we have $\Frob_{\frk{q}}^{-1}\Frob_{\overline{\frk{q}}}^{-1}=1$.
\end{proof}
\end{lemma}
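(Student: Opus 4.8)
The plan is to translate everything into a computation inside Galois groups via geometrically normalized class field theory, and then just unwind the definitions of $\sigma_{cn}$ and $\sigma^{\mathbf{c}}_{cn}$. Recall first that reciprocity provides isomorphisms $\rec_{\frk{m}}\colon H(\frk{m})\xrightarrow{\ \sim\ }\Gal(K(\frk{m})/K)$ and $\rec_{[cn]}\colon H[cn]\xrightarrow{\ \sim\ }\Gal(K[cn]/K)$ sending the class $[\frk{a}]$ of an ideal coprime to the modulus to the geometric Frobenius $\Frob_{\frk{a}}^{-1}$; since $p\geq 5$ and $p\nmid h_K$ these are compatible with the natural projections among the fields $K(\frk{m})$ and with $K(cn)\twoheadrightarrow K[cn]$, with complex conjugation $\mathbf{c}$, and with the canonical identifications $\tau$ (which on Galois groups become the evident decompositions $\Gal(K(\frk{m}\frk{m}')/K)\cong\Gal(K(\frk{m})/K)\times\Gal(K(\frk{m}')/K)$ for coprime $\frk{m},\frk{m}'$, using $K(\frk{m})\cap K(\frk{m}')=K$ as the $p$-part of the Hilbert class field of $K$ is trivial). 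Two consequences will be used repeatedly: the image of $[\frk{q}]$ under $H(\frk{c}\frk{n}^+(n^-))\cong H(\frk{c}\frk{n}^+)\times H(n^-)$ has both components equal to the class of $\frk{q}$, and $\tau_{cn^+}$ applied to the class of one and the same ideal in each of its two slots returns that ideal's class in $H(cn^+)$.

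Next, observing that $\sigma_{cn}$ and $\sigma^{\mathbf{c}}_{cn}$ are homomorphisms of abelian pro-$p$ groups and that $[(q)]=[\frk{q}]\cdot[\overline{\frk{q}}]$, I would reduce to computing on the generators $[\frk{q}]$ and $[\overline{\frk{q}}]$. For $\sigma_{cn}([\frk{q}],[\frk{q}])$: running the input through the decomposition and the map $(\dagger)$ — whose halving step sends $([\frk{q}]_{n^-},[\frk{q}]_{n^-})$ to $[\frk{q}]_{n^-}$, and whose $\tau_{cn^+}$ returns $[\frk{q}]_{cn^+}$ — produces exactly the class of $\frk{q}$ in $H(cn)$, which maps to $[\frk{q}]$ in $H[cn]$, i.e. to $\Frob_{\frk{q}}^{-1}$. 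The identical computation with $\overline{\frk{q}}$ gives $\Frob_{\overline{\frk{q}}}^{-1}$, which equals $\Frob_{\frk{q}}$ in $\Gal(K[cn]/K)$ since $[(q)]=[\frk{q}]\cdot[\overline{\frk{q}}]=1$ there ($q$ being rational), i.e. $\Frob_{\frk{q}}^{-1}\Frob_{\overline{\frk{q}}}^{-1}=1$; multiplicativity then also yields the $([(q)],[(q)])$ entry. For the mixed generator $([\frk{q}],[\overline{\frk{q}}])$ (with $[\frk{q}]$ in the first and $[\overline{\frk{q}}]$ in the second slot), the $H(n^-)$-component coming out of $(\dagger)$ is $\tfrac12\big([\frk{q}]_{n^-}+[\overline{\frk{q}}]_{n^-}\big)=\tfrac12[(q)]_{n^-}$, which maps to $0$ in the ring class quotient $H[n^-]$ as $q\in\Z$; and the $H(cn^+)$-component is $\tau_{cn^+}\big([\frk{q}]_{\frk{c}\frk{n}^+},[\overline{\frk{q}}]_{\overline{\frk{c}}\overline{\frk{n}}^+}\big)$, which — because $[\overline{\frk{q}}]_{\overline{\frk{c}}\overline{\frk{n}}^+}=\mathbf{c}\big([\frk{q}]_{\frk{c}\frk{n}^+}\big)$, the split-prime hypothesis on $c$ and $n^+$ being exactly what makes $cn^+\cl{O}_K=\frk{c}\frk{n}^+\cdot\overline{\frk{c}}\overline{\frk{n}}^+$ a product of coprime ideals swapped by $\mathbf{c}$ — lies in the $\mathbf{c}$-fixed part of $H(cn^+)$, hence in $\ker\!\big(H(cn^+)\to H[cn^+]\big)$: indeed $K[cn^+]$ is exactly the subfield of $K(cn^+)$ fixed by $H(cn^+)^{\mathbf{c}=1}$, since $\mathbf{c}$ acts by inversion on the ring class group and $p\neq 2$. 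Hence $\sigma_{cn}([\frk{q}],[\overline{\frk{q}}])=1$. The assertions for $\sigma^{\mathbf{c}}_{cn}$ then follow immediately from $\sigma^{\mathbf{c}}_{cn}(x,y)=\sigma_{cn}(x,\mathbf{c}(y))$ together with $\mathbf{c}$ exchanging the classes of $\frk{q}$ and $\overline{\frk{q}}$, and the inert case collapses at once to the triviality of $[(q)]$ in $H[cn]$.

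The main obstacle — the only step that is not mechanical substitution — is the bookkeeping around the halving map in $(\dagger)$ together with the projection $H(cn)\twoheadrightarrow H[cn]$: one has to know that the square root is well defined (this is where $p\neq 2$ is used, as flagged when $\sigma_{cn}$ was introduced) and, crucially, that the ``$\mathbf{c}$-symmetric'' contribution $\tau_{cn^+}([\frk{q}],[\overline{\frk{q}}])$ dies after passing to the ring class group, i.e. that $\ker(H(cn^+)\to H[cn^+])$ coincides with the $\mathbf{c}$-fixed part of $H(cn^+)$. This identification, together with the recognition that $[\overline{\frk{q}}]_{\overline{\frk{c}}\overline{\frk{n}}^+}$ is the complex conjugate of $[\frk{q}]_{\frk{c}\frk{n}^+}$, is what forces the mixed generators to $1$; once it is in place, all the remaining entries come out by the direct computation above, exactly as the terse hint (``$\Frob_{\frk{q}}^{-1}\Frob_{\overline{\frk{q}}}^{-1}=1$ in $H[cn]$'') suggests.
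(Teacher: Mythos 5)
Your proof is correct and is exactly the computation the paper declines to write out: you unwind the definitions of $\sigma_{cn}$ and $\sigma^{\mathbf{c}}_{cn}$ and invoke the relation $\Frob_{\frk{q}}^{-1}\Frob_{\overline{\frk{q}}}^{-1}=1$ in $H[cn]$ at precisely the point the paper's hint indicates, adding the one genuinely non-mechanical observation needed for the mixed entry, namely that $\tau_{cn^+}\bigl([\frk{q}]_{\frk{c}\frk{n}^+},[\overline{\frk{q}}]_{\overline{\frk{c}}\overline{\frk{n}}^+}\bigr)$ is $\mathbf{c}$-fixed and that $\mathbf{c}$-fixed elements die in the ring class quotient since $\mathbf{c}$ acts there by inversion and $p$ is odd. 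The only cosmetic slip is the switch to additive notation (``$\tfrac12([\frk{q}]_{n^-}+[\overline{\frk{q}}]_{n^-})$'') for a group written multiplicatively everywhere else; this should be $([\frk{q}]_{n^-}[\overline{\frk{q}}]_{n^-})^{1/2}=([(q)]_{n^-})^{1/2}$.
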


Now we fix $f\in S_k(\Gamma_0(N_f))$ a newform (for some even integer $k\geq 2$) and assume that $g=\theta_{\psi_1}$ and $h=\theta_{\psi_2}$ (for $\psi_1$ and $\psi_2$ Hecke characters as in Section \ref{subsec:llz}). Note that the assumption that $\chi_g\chi_h=1$ implies that $l\equiv m\mod 2$ for the weights of $g$ and $h$. Recall that we assume $l\geq m$.

The level of $g$ and $h$ will be again denoted by $M$ and we assume that $(N_f,M)=1$. We always let $E$ denote a finite extension of $\Q_p$ containing all the relevant coefficients (with ring of integers $\cl{O}$).

Set $\mathbf{r}:=(r_1, r_2, r_3):=(k-2,l-2,m-2)$ and we assume that it is a balanced triple.

\medskip
Let $N=N_f\cdot M$, and (since $N$ will be fixed in what follows) put $Y(n)= Y_1(Nn)$ for every positive integer $n$ (with the convention $Y=Y(1)$). We will assume that $p\nmid N$ unless otherwise stated. 

\medskip
Fix test vectors
\[
\breve{f}\in S_k(\Gamma_0(N))[f],\quad  \breve{g}\in S_l(N,\chi_g)[g],\quad \breve{h}\in S_m(N,\chi_h)[h].
\]
These test vectors determine maps
\begin{align*}
H^1_{\et}(Y_{\overline{\Q}},\Ls_{k-2}(1))_{\cl{O}} &\rightarrow H^1_{\et}(Y_1(N_f)_{\overline{\Q}},\Ls_{k-2}(1))_{\cl{O}} \\
H^1_{\et}(Y(n)_{\overline{\Q}},\Ls_{l-2}(1))_{\cl{O}} &\rightarrow H^1_{\et}(Y_1(Mn)_{\overline{\Q}},\Ls_{l-2}(1))_{\cl{O}} \\
H^1_{\et}(Y(n)_{\overline{\Q}},\Ls_{m-2}(1))_{\cl{O}} &\rightarrow H^1_{\et}(Y_1(Mn)_{\overline{\Q}},\Ls_{m-2}(1)))_{\cl{O}}
\end{align*}

Let $n$ be a squarefree positive integer coprime to $pN$ and recall the notation $\tilde{n}$ of the previous Section \ref{subsec:llz}.

We use the above maps to project the classes $\kappa_{\tilde{n},\mathbf{r}}^{(2)}$ to classes 
\begin{equation*}
\tilde{\kappa}_{n,f,\underline{\psi}}\in H^1\big(\Q, V(f)(1-k/2)\otimes_E H^1(\psi_1,\psi_2,n)_E\big),
\end{equation*}
where $\underline{\psi}=(\psi_1,\psi_2)$.

Finally, via the morphisms $\underline{\nu}_n$ of Corollary \ref{cor:LLZ}, Shapiro's lemma and corestriction from $K[cn]$ to $K[n]$ we obtain classes
\begin{equation}
\tilde{\kappa}_{n,f,\eta_1}\in H^1(K[n], V(f)(1-k/2)\otimes_{E}\hat{\eta}_1^{-1}),    
\end{equation}
where recall that $\eta_1:=\psi_1\psi_2\vert\cdot\vert_{\bb{A}_K^\times}^{(l+m-2)/2}$.

The same procedure using $\underline{\nu}^\mathbf{c}_n$ produces classes
\begin{equation}
\tilde{\kappa}_{n,f,\eta_2}\in H^1(K[n], V(f)(1-k/2)\otimes_{E}\hat{\eta}_2^{-1}),    
\end{equation}
where recall that $\eta_2:=\psi_1\psi_2^c\vert\cdot\vert_{\bb{A}_K^\times}^{(l+m-2)/2}$.

\begin{proposition}\label{prop:normrelations3}
Let $n$ be a positive integer coprime to $p\cdot (l-2)!\cdot N$, and let $q$ be a rational prime coprime to $p\cdot n\cdot N\cdot (l-2)!$.
\begin{enumerate}[(i)]
\item If $q$ splits in $K$ as $(q) = \frk{q}\overline{\frk{q}}$, then
\begin{align*}
&\cor_{K[nq]/K[n]}(\tilde{\kappa}_{nq,f,\eta_1})=\\
&\Bigg(a_q(f)-q^{k/2-1}\cdot \eta_1(\frk{q})\Frob_{\frk{q}}^{-1}-q^{k/2-1}\cdot\eta_1(\overline{\frk{q}})\Frob_{\frk{q}}+\frac{q^r(1-q)}{q^{l+m-2}}\cdot \psi_1(\frk{q})\psi_2(\overline{\frk{q}})\Bigg)(\tilde{\kappa}_{n,f,\eta_1})
\end{align*}
and
\begin{align*}
&\cor_{K[nq]/K[n]}(\tilde{\kappa}_{nq,f,\eta_2})=\\
&\Bigg(a_q(f)-q^{k/2-1}\cdot\eta_2(\frk{q})\Frob_{\frk{q}}^{-1}-q^{k/2-1}\cdot\eta_2(\overline{\frk{q}})\Frob_{\frk{q}}+\frac{q^r(1-q)}{q^{l+m-2}}\cdot \psi_1(\frk{q})\psi_2(\frk{q})\Bigg)(\tilde{\kappa}_{n,f,\eta_2}).
\end{align*}
\item If $q$ is inert in $K$, then for $i\in\{1,2\}$ we have
\[
\cor_{K[nq]/K[n]}(\tilde{\kappa}_{nq,f,\eta_i})=\bigg(a_q(f)^2-\frac{(q+1)}{q}\cdot q^{k-2}(q-1+q^{l-2}+q^{m-2})\bigg)(\tilde{\kappa}_{n,f,\eta_i}).
\]
\end{enumerate}
\begin{proof}
The commutativity of the two diagrams in the statement of corollary \ref{cor:LLZ} allows us to compute
\[
\cor_{K[nq]/K[n]}(\tilde{\kappa}_{nq,f,\eta_1})=\big(\cor_{K[cn]/K[n]}\circ\,\underline{\nu}_n\circ(\pr_{\tilde{n}},1,1)_*\circ (\pr_{\tilde{q},*},\cl{N}_n^{nq})\big)(\tilde{\kappa}_{n,f,\eta_1}).
\]
If $q$ splits, we have $\tilde{q}=q$ and $\cl{N}_n^{nq}=\big(\pr_{1}\otimes 1-\pr_{q}\otimes A\,,\,\pr_{1}\otimes 1-\pr_{q}\otimes B\big)$,
where
\[
A=\frac{\psi_1(\frk{q})[\frk{q}]}{q^{l-1}},\qquad B=\frac{\psi_2(\overline{\frk{q}})[\overline{\frk{q}}]}{q^{m-1}}.
\]
The stated formula follows from an elementary computation where one needs to apply the results of Proposition \ref{prop:LLZ}, Lemma \ref{lemma:normrelations2} and Lemma \ref{lemma:idealFrob}.

If $q$ is inert, we have $\tilde{q}=q^2$ and $\cl{N}_n^{nq}=\big(\pr_{11}\otimes 1-\pr_{qq}\otimes C\,,\,\pr_{11}\otimes 1-\pr_{qq}\otimes D\big)$, where
\[
C=\frac{\psi_1((q))[(q)]}{q^{2l-2}},\qquad D=\frac{\psi_2((q))[(q)]}{q^{2m-2}}.
\]
The stated formula follows again by the aforementioned results. 
Note that thanks to our construction (cf. Remark \ref{phiinfactorrmk}), we avoid the (disturbing) factor $\phi(\tilde{q})$ in front of the norm relations. Such factor would appear if we had not modified the diagonal classes suitably.

The formulas for $\tilde{\kappa}_{n,f,\eta_2}$ follow from analogous computations. 
\end{proof}
\end{proposition}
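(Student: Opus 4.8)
The plan is to peel back the construction of $\tilde{\kappa}_{nq,f,\eta_i}$ and turn the corestriction into a geometric manipulation of diagonal classes. Recall that $\tilde{\kappa}_{nq,f,\eta_i}$ is obtained from $\kappa^{(2)}_{\widetilde{nq},\mathbf r}$ by projecting against the test vectors $\breve f,\breve g,\breve h$, applying the surjection $\underline{\nu}_{nq}$ (resp.\ $\underline{\nu}^{\mathbf c}_{nq}$) of Corollary~\ref{cor:LLZ}, Shapiro's lemma, and corestriction down to $K[nq]$. Since $\widetilde{nq}=\tilde n\cdot\tilde q$ with $\tilde q=q$ when $q$ splits and $\tilde q=q^{2}$ when $q$ is inert, the Galois corestriction $\cor_{K[cnq]/K[cn]}$ matches up — through the second commuting square of Corollary~\ref{cor:LLZ} — with the norm map $\cl N_n^{nq}$ of Section~\ref{subsec:llz} on the CM factors together with the degeneracy pushforward $\pr_{\tilde q,*}$ on the $\breve f$-factor. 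Assembling these compatibilities, I would first reduce the claim to evaluating the explicit composite
\[
\cor_{K[nq]/K[n]}(\tilde{\kappa}_{nq,f,\eta_i})=\Big(\cor_{K[cn]/K[n]}\circ\underline{\nu}_n\circ(\pr_{\tilde n},1,1)_*\circ(\pr_{\tilde q,*},\cl N_n^{nq})\Big)(\tilde{\kappa}_{n,f,\eta_i}).
\]

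For $q$ split I would expand $\cl N_n^{nq}$, which is externally the product of two binomial maps of type (ii), $\pr_{1*}-A\,\pr_{q*}$ on the $\psi_1$-factor and $\pr_{1*}-B\,\pr_{q*}$ on the $\psi_2$-factor, with $A=\psi_1(\frk q)[\frk q]/q^{l-1}$ and $B=\psi_2(\overline{\frk q})[\overline{\frk q}]/q^{m-1}$ (the $\eta_2$-variant uses $\sigma^{\mathbf c}_{cn}$, so $B$ is supported at $\frk q$). Multiplying out produces four degeneracy-map terms, and on each I would invoke Lemma~\ref{lemma:normrelations1} to rewrite the relevant pushforward of $\kappa_{N\widetilde{nq},\mathbf r}$ as a combination of $T_q,T_q'$ acting on $\kappa_{N\tilde n,\mathbf r}$, with the $q$-powers $q^{r-r_i}$ supplied by that lemma. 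Projecting against $\breve f$ sends the $T_q$ on the $\breve f$-factor to $a_q(f)$; the operators landing on the CM factors are converted by Proposition~\ref{prop:LLZ} into the class-group elements $\psi_1(\frk q)[\frk q]$, $\psi_2(\overline{\frk q})[\overline{\frk q}]$; and Lemma~\ref{lemma:idealFrob} substitutes $[\frk q]\mapsto\Frob_{\frk q}^{-1}$, $[\overline{\frk q}]\mapsto\Frob_{\frk q}$, $[(q)]\mapsto 1$. Gathering the normalizations — the twist $V(f)(1-k/2)$ contributing $q^{k/2-1}$, the Tate twist $\cl O(\tfrac{2-l-m}{2})$, the denominators $q^{l-1},q^{m-1}$ — then yields the stated formula, and the $\eta_2$ case is identical with $\sigma^{\mathbf c}_{cn}$.

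For $q$ inert the same scheme runs with $\tilde q=q^{2}$: $\cl N_n^{nq}$ is now, as in case (iii), the external product of $\pr_{11*}-C\,\pr_{qq*}$ and $\pr_{11*}-D\,\pr_{qq*}$ with $C=\psi_1((q))[(q)]/q^{2l-2}$, $D=\psi_2((q))[(q)]/q^{2m-2}$, and the $\breve f$-factor is pushed along $\pr_{qq}$. Multiplying out and applying Lemma~\ref{lemma:normrelations2} (which brings in the squares of $T_q,T_q'$ and the diamond operators $\langle q\rangle,\langle q\rangle'$), two simplifications make the output clean: $[(q)]\mapsto 1$ by Lemma~\ref{lemma:idealFrob}, so no Frobenius survives, and on the CM factors $\phi_{i}(T_q')=0$ (there is no ideal of norm $q$) while $\phi_{i}(\langle q\rangle')=\chi_{\psi_i}(q)[(q)]$, with $\chi_{\psi_1}\chi_{\psi_2}=1$ by Assumption~\ref{ass:selfdual}. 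The $\breve f$-factor contributes $T_q^2\mapsto a_q(f)^2$ and $\langle q\rangle\mapsto 1$ (trivial nebentypus of $f$), and inserting the values $\psi_i((q))$ and collecting Tate twists assembles the remaining terms into $\tfrac{q+1}{q}\,q^{k-2}\big(q-1+q^{l-2}+q^{m-2}\big)$.

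With the conceptual inputs in place, the real difficulty is twofold. First, the careful accounting of \emph{every} normalization factor — the two Tate twists, the $q$-powers $q^{r-r_i}$ and $q^{2r-2r_i}$ coming out of Lemmas~\ref{lemma:normrelations1}--\ref{lemma:normrelations2}, the denominators $q^{\nu_i-1}$ and $q^{2\nu_i-2}$ inside $A,B,C,D$, and the $\phi$-images of the diamond operators — checking that these collapse exactly to the two displayed expressions, uniformly in the balanced triple $\mathbf r$; the inert case is the more delicate one, since Lemma~\ref{lemma:normrelations2} is itself a double application of Lemma~\ref{lemma:normrelations1}. Second, one must run the argument with the modified classes $\kappa^{(2)}=(\pi_2,1,1)_*\kappa^{(1)}$ rather than with $\kappa^{(1)}$ directly: it is precisely Remark~\ref{phiinfactorrmk} that removes the spurious Euler-type factor $\phi(\tilde q)$ which would otherwise contaminate the norm relation.
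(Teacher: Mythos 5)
Your proposal reproduces the paper's own argument: reduction to the explicit composite through the commuting squares of Corollary~\ref{cor:LLZ}, expansion of $\cl{N}_n^{nq}$, and the application of Proposition~\ref{prop:LLZ}, Lemmas~\ref{lemma:normrelations1}--\ref{lemma:normrelations2} and Lemma~\ref{lemma:idealFrob}, with Remark~\ref{phiinfactorrmk} removing the extraneous $\phi(\tilde q)$ factor from the modified classes. If anything you are slightly more precise than the paper's prose in invoking Lemma~\ref{lemma:normrelations1} for the split case (where $\tilde q=q$ and the degeneracy maps drop the level by a single $q$) and Lemma~\ref{lemma:normrelations2} only for the inert case (where $\tilde q=q^2$).
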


In particular, restricting to positive integers $n$ as above that are divisible only by primes $q$ splitting in $K$, this leads to the following result.

\begin{theorem}\label{principal:tame}
Let $\mathcal{S}$ be the set of squarefree products of primes $q$ which split in $K$ and are coprime to $p\cdot N\cdot (l-2)!$. Assume that $p\geq\max\{5,l+1\}$ and that $H^1(K[n],T(f))$ is $\cl{O}$-free for all $n\in\mathcal{S}$. Then there exists two collections of classes
\[
\big\{\kappa_{n,f,\eta_1}\in H^1(K[n],T(f)(1-k/2)\otimes\hat{\eta}_1^{-1})\mid n\in\mathcal{S}\big\}\qquad \big\{\kappa_{n,f,\eta_2}\in H^1(K[n],T(f)\otimes\hat{\eta}_2^{-1})\mid n\in\mathcal{S}\big\}
\]
such that whenever $n, nq\in\mathcal{S}$ with $q$ a prime, we have for $i\in\{1,2\}$
\begin{equation}\label{eq:norm-split}
{\rm cor}_{K[nq]/K[n]}(\kappa_{nq,f,\eta_i})=P_{i,\frk{q}}({\rm Fr}_{\frk{q}}^{-1})\kappa_{nq,f,\eta_i},
\end{equation}
where $\mathfrak{q}$ is any of the primes of $K$ above $q$, and $P_{i,\frk{q}}(X)=\det\big(1-X\cdot {\Frob}_{\frk{q}}^{-1}\vert (V^*(f)(1)\otimes\hat{\eta}_i)\big)$.
Moreover, $\kappa_{n,f,\eta_1}=\tilde{\kappa}_{n,f\underline{\psi}}$ and $\kappa_{n,f,\eta_2}=\tilde{\kappa}^\mathbf{c}_{n,f\underline{\psi}}$ whenever $K[n]/K$ is unramified outside $pN$.
\end{theorem}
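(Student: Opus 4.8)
The plan is to produce the two collections $\{\kappa_{n,f,\eta_i}\}$ by a standard renormalization of the classes $\tilde\kappa_{n,f,\eta_i}$ constructed above, following the bookkeeping of the proof of the tame norm relations in \cite{CD2023} (and the analogous argument in \cite{LLZ2015}). The classes $\tilde\kappa_{n,f,\eta_i}$ of \eqref{eq:norm-split}'s predecessors already satisfy, by Proposition \ref{prop:normrelations3}(i), corestriction relations of the shape ``$\cor(\tilde\kappa_{nq,f,\eta_i}) = Q_{i,\frk q}(\Frob_{\frk q}^{-1})(\tilde\kappa_{n,f,\eta_i})$'' for split $q$, where $Q_{i,\frk q}$ is the degree-$3$ polynomial obtained by expanding the four-term expression in the Proposition. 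The first step is to observe that this degree-$3$ polynomial is \emph{not} quite the Euler factor $P_{i,\frk q}(X) = \det(1-X\Frob_{\frk q}^{-1}\mid V^*(f)(1)\otimes\hat\eta_i)$, which has degree $2$: the discrepancy is the well-known extra linear factor coming from the fact that $V(f)$ restricted to $G_K$ and twisted appropriately has an ``extra'' eigenvalue contribution. One checks by a direct computation with Frobenius eigenvalues — using that $a_q(f)$ is the trace of $\Frob_q$ on $V(f)$, that $\eta_i(\frk q),\eta_i(\overline{\frk q})$ are (up to the normalizing power of $q$) the eigenvalues of $\Frob_{\frk q}$ on $\hat\eta_i^{-1}$, and the relation $\psi_1(\frk q)\psi_2(\overline{\frk q}) = $ (a power of $q$) $\cdot\,\eta_2(\frk q)\eta_1(\overline{\frk q})$ or similar — that $Q_{i,\frk q}(X) = (1 - \lambda_{\frk q} X)\cdot P_{i,\frk q}(X)$ for an explicit unit-or-Weil-number $\lambda_{\frk q}$ depending on $n,q$.

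The second step is to absorb the spurious factors $(1-\lambda_{\frk q}X)$ into a renormalization. For $n = \prod_{q\mid n} q \in\mathcal S$ set $\kappa_{n,f,\eta_i} := \big(\prod_{q\mid n}(1 - \lambda_{\frk q}\Frob_{\frk q}^{-1})\big)^{-1}\cdot\tilde\kappa_{n,f,\eta_i}$, interpreting this as applying the operator $\prod_{q\mid n}(1-\lambda_{\frk q}\Frob_{\frk q}^{-1})^{-1}$; one must check that this operator acts invertibly on $H^1(K[n], T(f)(1-k/2)\otimes\hat\eta_i^{-1})$, which is where the hypothesis that $H^1(K[n],T(f))$ is $\cl O$-free enters — freeness lets one check invertibility after inverting $p$ and then descend, or alternatively one argues that $1-\lambda_{\frk q}\Frob_{\frk q}^{-1}$ is an automorphism of the relevant induced module because $\lambda_{\frk q}$ has absolute value different from $1$ (a Weil-number estimate, using $p\geq\max\{5,l+1\}$ to keep weights in range). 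With this renormalization in place, the corestriction relation \eqref{eq:norm-split} for $\kappa_{n,f,\eta_i}$ with exactly the degree-$2$ Euler factor $P_{i,\frk q}$ follows formally from the relation for $\tilde\kappa_{n,f,\eta_i}$: the $(1-\lambda_{\frk q}\Frob_{\frk q}^{-1})$ factor on the right cancels against the renormalizing denominator, and the denominators for the primes dividing $n$ match up on both sides since $\Frob$ and corestriction commute.

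The third step is the integrality claim and the final identification. For integrality, the classes $\tilde\kappa_{n,f,\eta_i}$ live in the $E$-cohomology, but they arise as images of the genuinely integral diagonal classes $\kappa_{\tilde n,\mathbf r}$ (cf. \eqref{detrdef}) under maps that are integral away from $(l-2)!$, which is invertible by the coprimality hypothesis on $n$; combined with the $\cl O$-freeness of $H^1(K[n],T(f))$ one obtains that $\tilde\kappa_{n,f,\eta_i}$, and hence $\kappa_{n,f,\eta_i}$ after the renormalization by a product of automorphisms of the lattice, lands in $H^1(K[n],T(f)(1-k/2)\otimes\hat\eta_i^{-1})$. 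Finally, when $K[n]/K$ is unramified outside $pN$ the renormalizing factors are trivial (there are no ``bad'' primes $q$ contributing, or rather the construction of $\tilde\kappa_{n,f,\underline\psi}$ and $\tilde\kappa^\mathbf c_{n,f,\underline\psi}$ via $\underline\nu_n$, Shapiro and corestriction is literally the one producing $\kappa_{n,f,\eta_1},\kappa_{n,f,\eta_2}$), giving the asserted equality $\kappa_{n,f,\eta_1}=\tilde\kappa_{n,f,\underline\psi}$, $\kappa_{n,f,\eta_2}=\tilde\kappa^\mathbf c_{n,f,\underline\psi}$.

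I expect the main obstacle to be the second step: verifying cleanly that the extra factor $(1-\lambda_{\frk q}X)$ acts invertibly and that the renormalization is consistent over the tower $\{K[n]\}_{n\in\mathcal S}$ (i.e. that the renormalizing operators are compatible under corestriction, so that the new classes still form a norm-coherent-up-to-Euler-factor system). The Frobenius-eigenvalue computation identifying $Q_{i,\frk q} = (1-\lambda_{\frk q}X)P_{i,\frk q}$ is routine but must be done carefully keeping track of the twist $(1-k/2)$ and the infinity types of $\eta_1,\eta_2$; the genuinely delicate point is the freeness/invertibility bookkeeping, for which the hypothesis on $H^1(K[n],T(f))$ is tailor-made.
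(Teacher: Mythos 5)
The proposal's first step contains a factual error that undermines the entire strategy. You claim that the norm-relation factor from Proposition~\ref{prop:normrelations3}(i) is "the degree-$3$ polynomial $Q_{i,\frk q}$" and that it factors as $Q_{i,\frk q}(X)=(1-\lambda_{\frk q}X)\,P_{i,\frk q}(X)$. Neither half of this is right. The corestriction relation involves \emph{both} $\Frob_{\frk q}^{-1}$ and $\Frob_{\frk q}$, so the operator is a Laurent polynomial of the form $a_q(f)+C - q^{k/2-1}\eta_1(\frk q)X - q^{k/2-1}\eta_1(\overline{\frk q})X^{-1}$ (writing $X=\Frob_{\frk q}^{-1}$), not a degree-$3$ polynomial. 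Clearing the denominator by multiplying by $X$ gives a degree-$2$ polynomial, which therefore has the same degree as the Euler factor $P_{i,\frk q}$ — there is no room for a spurious linear factor $(1-\lambda_{\frk q}X)$. Moreover, direct computation shows $X\cdot Q_{1,\frk q}(X)$ is not even a scalar multiple of $P_{1,\frk q}(X)$: the constant term of $X\,Q_{1,\frk q}$ is $-q^{k/2-1}\eta_1(\overline{\frk q})$ and the leading coefficient is $-q^{k/2-1}\eta_1(\frk q)$, while for $P_{1,\frk q}(X)=1-\frac{a_q(f)\eta_1(\frk q)}{q^{k/2}}X+\frac{\eta_1(\frk q)^2}{q}X^2$ the ratio of leading to constant coefficient is $\eta_1(\frk q)^2/q$; proportionality would require $\eta_1(\frk q)\eta_1(\overline{\frk q})=q$, but in fact $\eta_1(\frk q)\eta_1(\overline{\frk q})=1$ since $\eta_1$ is an anticyclotomic character. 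So the two polynomials are genuinely incommensurable, and a renormalization by $\prod_{q\mid n}(1-\lambda_{\frk q}\Frob_{\frk q}^{-1})^{-1}$ cannot produce classes satisfying the desired norm relations: the whole second and third steps, which depend on that factorization, fall apart.

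The paper's actual route is different and is the standard one for JNS-style tame norm relations: it first invokes Rubin's \emph{Lemma 9.6.3} (with $f_{\frk q}=Q_{1,\frk q}$, $u_{\frk q}=-\eta_1(\frk q)/q^{k/2}$, $d=1$) to convert the Laurent-polynomial corestriction relation into one involving an honest polynomial $\tilde P_{1,\frk q}(X)$, and then observes the \emph{congruence} $\tilde P_{1,\frk q}(X)\equiv P_{1,\frk q}(X)\pmod{q-1}$, which is precisely the hypothesis of Rubin's \emph{Lemma 9.6.1} allowing one to replace the auxiliary polynomials by the genuine Euler factors $P_{i,\frk q}$. The relationship between the naive norm-relation factor and the Euler factor is a congruence modulo $q-1$, not a divisibility, and no amount of invertible renormalization can bridge that — the necessary flexibility comes from Rubin's inductive construction. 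Your observations on denominators (controlled by $(l-2)!$ and the assumption $p>l$) and on the $\cl O$-freeness hypothesis (used to lift the $E$-coefficient classes uniquely to $T(f)$-coefficients and to invoke Rubin's lemmas at the level of lattices) are aligned with the paper, as is the remark that the identification $\kappa_{n,f,\eta_i}=\tilde\kappa_{n,f,\underline\psi}$ (resp.\ $\tilde\kappa^{\mathbf c}_{n,f,\underline\psi}$) for $K[n]/K$ unramified outside $pN$ amounts to the construction not touching the bottom class; but these are peripheral to the central mechanism, which your proposal misidentifies.
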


\begin{proof}
We prove the result for $i=1$ (the proof for $i=2$ is identical). We start observing that the possible denominators for the classes $\tilde{\kappa}_{n,f,\eta_1}$ are divisors of $(l-2)!$ (recall that we are assuming that $l\geq m$), as it follows from the discussion after Equation \eqref{isosr}. Since we are forced to assume that $p>l$ by Theorem \ref{thm:LLZ}, it follows that under our hypothesis the classes $\tilde{\kappa}_{n,f,\eta_1}$ can be lifted uniquely to classes with coefficients in $T(f)(1-k/2)\otimes_{\cl{O}}\hat{\eta_1}^{-1}$. 
One easily computes that
\[
P_{1,\frk{q}}(X)=1-\frac{a_q(f)\eta_1(\frk{q})}{q^{k/2}}X+\frac{\eta_1(\frk{q})^2}{q}X^2\,.
\]
We set
\[
Q_{1,\frk{q}}(X)=a_q(f)+\frac{q^r(1-q)}{q^{l+m-2}}\psi_1(\frk{q})\psi_2(\overline{\frk{q}})-q^{k/2-1}
\cdot\eta_1(\frk{q})X^{-1}-q^{k/2-1}\cdot\eta_1(\overline{\frk{q}})X\,.
\]
Applying \cite[Lemma 9.6.3]{Rub2000} (with $f_\frk{q}=Q_{1,\frk{q}}$, $u_\frk{q}=-\eta_1(\frk{q})/q^{k/2}$ and $d=1$ in the notation of \cite{Rub2000}) to the previous Proposition \ref{prop:normrelations3}, one obtains a collection of classes satisfying norm relations for the polynomial
\[
\tilde{P}_{1,\frk{q}}(X)=P_{1,\frk{q}}(X)+\frac{1-q}{q}+\frac{q-1}{q^{(l+m+2)/2}}\psi_1(\frk{q})\psi_2(\overline{\frk{q}})\eta_1(\frk{q})X\,.
\]
Since (visibly) $\tilde{P}_{1,\frk{q}}(X)\equiv P_{1,\frk{q}}(X)\mod (q-1)$, we can apply \cite[Lemma 9.6.1]{Rub2000} to obtain a collection of classes $\big\{\kappa_{n,f,\eta_1}\in H^1(K[n],T(f)\otimes\hat{\eta}_1^{-1})\mid n\in\mathcal{S}\big\}$ that satisfies the stated norm relations and properties.
\end{proof}

\begin{rmk}
In the inert case, writing $\frk{q}=(q)$ we have
\[
P_{1,\frk{q}}(X)=\det\big(1-X\cdot {\Frob}_{\frk{q}}^{-1}\vert (V^*(f)(1)\otimes\hat{\eta}_1)\big)=1-\frac{a_q(f)^2}{q^k}X+\frac{2X}{q}+\frac{X^2}{q^2}
\]
and, as in the proof of Theorem~\ref{principal:tame}, we find the congruence (recall that $k$ and $l+m$ are even integers)
\[
-P_{1,\frk{q}}(\Frob_{\frk{q}}^{-1})\equiv a_q(f)^2-\frac{(q+1)^2}{q}\equiv a_q(f)^2-\frac{(q+1)}{q}\cdot q^{k-2}(q-1+q^{l-2}+q^{m-2}) \mod (q^2-1)
\]
as endomorphisms of $H^1(K[n],V^*(f)(1)\otimes\hat{\eta}_1)$ (and similarly for $P_{2,\frk{q}})$. This suggests the possibility of incorporating inert primes as well in our collection of classes.
\end{rmk}

\subsection{Selmer groups and local conditions at \texorpdfstring{$p$}{p}}
Now assume that $p$ is inert in $K$ and we identify the completion of $K$ at $p$ with $\Q_{p^2}$ (the unique degree $2$ unramified extension of $\Q_p$ in our fixed algebraic closure $\bar{\Q}_p$). We also assume that $f\in S_k(\Gamma_0(N_f))$ (with $k\geq 2$ even integer) is ordinary at $p$. Then the restriction of $V(f)$ to a decomposition group at $p$ (which we identify with $G_{\Q_p}$ via our fixed embedding $\iota_p$) admits a filtration
\begin{equation*}
0 \longrightarrow V(f)^+ \longrightarrow V(f) \longrightarrow V(f)^- \longrightarrow 0
\end{equation*}
where $V(f)^\pm$ is one-dimensional and $V(f)^-$ is unramified with $\Frob_p$ acting as multiplication by $\alpha_f$, the unit root of the Hecke polynomial of $f$ at $p$.

Let $\chi$ denote an anticyclotomic Hecke character of $K$ of $\infty$-type $(-j,j)$ with $j\in\Z_{\geq 0}$ and with conductor coprime to $p$.

\begin{nota}
We define $V_{f,\chi}:=V(f)(1-k/2)\otimes \hat{\chi}^{-1}$ (and similarly for $T_{f,\chi}$), where, as before, $\hat{\chi}$ is the $p$-adic Galois character attached to the $p$-adic avatar of $\chi$.
\end{nota}

We define $G_{\Q_{p^2}}$-stable subspaces
\begin{equation}\label{eq:bal-Sh}
\cl{F}_p^{+,\ord}(V_{f,\chi}):=V(f)^+(1-k/2)\otimes\hat{\chi}^{-1},\quad
\cl{F}_p^{+,\rel}(V_{f,\chi}):=V_{f,\chi},
\end{equation}
and, for $?\in\{\ord,\rel\}$, we let $\cl{F}_p^{-,?}(V_{f,\chi}):=V_{f,\chi}/\cl{F}_p^{+,?}(V_{f,\chi})$.

Let $L/K$ be a finite extension, and for every finite prime $v$ of $L$ and for $?\in\{\rel,\ord\}$, put
\[
H^1_?(L_v,V_{f,\chi}):=
\begin{cases}
{\rm ker}\big(H^1(L_v,V_{f,\chi})\rightarrow H^1(L_v,\cl{F}_p^{-,?}(V_{f,\chi}))\big)&\textrm{if $v\mid p$},\\
{\rm ker}\big(H^1(L_v,V_{f,\chi})\rightarrow H^1(L_v^{\rm nr},V_{f,\chi})\big)&\textrm{if $v\nmid p$,}
\end{cases}
\]
where $L_v^{\rm nr}$ is the maximal unramified extension of $L_v$. 
We then let $H^1_?(L_v,T_{f,\chi})$ be the inverse image of $H^1_?(L_v,V_{f,\chi})$ under the natural map $H^1(L_v,T_{f,\chi})\rightarrow H^1(L_v,V_{f,\chi})$, and let ${\rm Sel}_?(L,V_{f,\chi})\subset H^1(L,V_{f,\chi})$ (resp. ${\rm Sel}_?(L,T_{f,\chi})\subset H^1(L,T_{f,\chi})$) be the \emph{ordinary} (for $?=\ord$) or \emph{relaxed} (for $?=\rel$) Greenberg Selmer groups cut out by these local conditions. Explicitly,
\[
\Sel_?(L,V_{f,\chi}):={\rm ker}\Bigg(H^1(L,V_{f,\chi})\to\prod_v\frac{H^1(L_v,V_{f,\chi})}{H^1_?(L_v,V_{f,\chi})}\Bigg),
\]
where $v$ runs over the finite places of $L$.

Following \cite{BK1990}, we recall the definition of the Bloch-Kato Selmer group $\Sel_{\BK}(L,V_{f,\chi})$
\[
\Sel_{\BK}(L,V_{f,\chi}):={\rm ker}\Bigg(H^1(L,V_{f,\chi})\to\prod_v\frac{H^1(L_v,V_{f,\chi})}{H^1_f(L_v,V_{f,\chi})}\Bigg),
\]
where again $v$ runs over the finite places of $L$ and
\[
H^1_f(L_v,V_{f,\chi}):=
\begin{cases}
{\rm ker}\big(H^1(L_v,V_{f,\chi})\rightarrow H^1(L_v,V_{f,\chi}\otimes\bf{B}_{\cris})\big)&\textrm{if $v\mid p$},\\
{\rm ker}\big(H^1(L_v,V_{f,\chi})\rightarrow H^1(L_v^{\rm nr},V_{f,\chi})\big)&\textrm{if $v\nmid p$,}
\end{cases}
\]
with $\bf{B}_{\cris}$ denoting Fontaine's crystalline period ring. As usual, we define local conditions $H^1_f(L_v,T_{f,\chi})$ by propagation and obtain the integral Bloch-Kato Selmer group $\Sel_{\BK}(L,T_{f,\chi})$.

It is obviously true that $\Sel_{\BK}(L,V_{f,\chi})\subseteq\Sel_{\rel}(L,V_{f,\chi})$ for any finite extension $L$ of $K$.

\begin{lemma}
\label{lemma: descriptionSelmer}
If $\chi$ has $\infty$-type $(-j,j)$ with $0\leq j<k/2$, then $\Sel_{\ord}(K,V_{f,\chi})=\Sel_{\BK}(K,V_{f,\chi})$ and $\Sel_{\ord}(K[n],V_{f,\chi})=\Sel_{\BK}(K[n],V_{f,\chi})$ for all $n\geq 1$ coprime to $p$.
\begin{proof}
Write $V=V(f)(1-k/2)\otimes\Ind_K^\Q\hat{\chi}^{-1}$ till the end of the proof. Shapiro's lemma affords an identification $H^1(K,V_{f,\chi})=H^1(\Q,V)$, under which $H^1_{\ord}(\Q_{p^2},V_{f,\chi})$ is identified with
\[
{\rm Im}\Big(H^1\big(\Q_p,V(f)^+(1-k/2)\otimes\Ind_K^\Q\hat{\chi}^{-1}\big)\to H^1(\Q_p,V)\Big).
\]
It is then an exercise to see that $V^+:=V(f)^+(1-k/2)\otimes\Ind_K^\Q\hat{\chi}^{-1}$ is a $G_{\Q_p}$-stable subspace whose Hodge-Tate weights are positive (they are $k/2-j$ and $k/2+j$), such that the quotient $V/V^+$ has non-positive Hodge-Tate weights (they are $1-k/2-j$ and $1-k/2+j$). The lemma follows directly from \cite[Lemma 2]{Fla1990} and the fact that
$H^1_e(\Q_p,V)=H^1_f(\Q_p,V)=H^1_g(\Q_p,V)$ (this last statement being a consequence of the easily checked fact that ${\bf D}_{\cris}(\Q_p,V)^{\varphi=1}=\{0\}$, cf. \cite[Corollary 3.8.4]{BK1990}). The equality $\Sel_{\ord}(K[n],V_{f,\chi})=\Sel_{\BK}(K[n],V_{f,\chi})$ for $n\geq 1$ coprime to $p$ follows in the same way, observing that $p$ splits completely in the ring class field of $K$ of conductor $n$, and thus in $K[n]$ as well. Repeating the above proof after localization at every finite place of $v$ of $K[n]$ dividing $p$, one obtains the result.
\end{proof}
\end{lemma}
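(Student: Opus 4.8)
The plan is to observe first that $\Sel_{\ord}$ and $\Sel_{\BK}$ are defined by the \emph{same} local conditions at every finite place $v\nmid p$ (in both cases the unramified condition $\ker(H^1(L_v,V_{f,\chi})\to H^1(L_v^{\mathrm{nr}},V_{f,\chi}))$), so the entire content of the lemma is the equality of the local conditions at the places of $L$ above $p$, for $L=K$ and for $L=K[n]$ with $(n,p)=1$.

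I would first treat the case $L=K$. Since $p$ is inert there is a unique prime $\mathfrak p=(p)$ over $p$, with completion $\Q_{p^2}$. Applying Shapiro's lemma I identify $H^1(K,V_{f,\chi})=H^1(\Q,V)$ with $V:=V(f)(1-k/2)\otimes\Ind_K^\Q\hat\chi^{-1}$, under which the local condition at $p$ lives inside $H^1(\Q_p,V)$ and the ordinary condition $H^1_{\ord}(\Q_{p^2},V_{f,\chi})$ is identified with the image of $H^1(\Q_p,V^+)\to H^1(\Q_p,V)$, where $V^+:=V(f)^+(1-k/2)\otimes\Ind_K^\Q\hat\chi^{-1}$ is a $G_{\Q_p}$-stable subspace. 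The key point is then to check, using $0\le j<k/2$, that $V^+$ is de Rham with all Hodge--Tate weights strictly positive (they should come out to be $k/2-j$ and $k/2+j$) while $V/V^+$ has all Hodge--Tate weights $\le 0$ (namely $1-k/2-j$ and $1-k/2+j$). Granting this, the standard description of the Bloch--Kato subspace for ordinary-type representations (Flach's lemma, \cite[Lemma 2]{Fla1990}) identifies $H^1_f(\Q_p,V)$ with exactly the image of $H^1(\Q_p,V^+)$, so the ordinary and Bloch--Kato conditions at $p$ agree; combined with the equality of the conditions away from $p$ this gives $\Sel_{\ord}(K,V_{f,\chi})=\Sel_{\BK}(K,V_{f,\chi})$. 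Along the way one records that $H^1_e(\Q_p,V)=H^1_f(\Q_p,V)=H^1_g(\Q_p,V)$, which follows from $\mathbf D_{\cris}(\Q_p,V)^{\varphi=1}=0$ (a Frobenius weight count) together with \cite[Corollary 3.8.4]{BK1990}, so there is no ambiguity in which Bloch--Kato condition is meant.

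For $L=K[n]$ with $(n,p)=1$ I would note that $\mathfrak p=(p)$ is principal with a rational-integer generator, hence trivial in the ring class group of conductor $n$, so $\mathfrak p$ splits completely in the ring class field of conductor $n$ and therefore in $K[n]$. Consequently every place $v\mid p$ of $K[n]$ satisfies $K[n]_v\cong\Q_{p^2}$ and $V_{f,\chi}\vert_{G_{K[n]_v}}$ is literally the representation analysed over $K_{\mathfrak p}$, cut by the same subspace $\cl{F}_p^{+,\ord}$. Rerunning the local argument of the previous paragraph at each such $v$ then yields the coincidence of the two families of local conditions, hence $\Sel_{\ord}(K[n],V_{f,\chi})=\Sel_{\BK}(K[n],V_{f,\chi})$.

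The main obstacle I anticipate is the Hodge--Tate bookkeeping in the second paragraph: one must fix the normalizations for $V(f)$, for the twist by $1-k/2$, and for the Hodge--Tate weights of the anticyclotomic character $\hat\chi^{-1}$ at the two embeddings of $\Q_{p^2}$, and then verify that $0\le j<k/2$ is precisely the inequality forcing $V^+$ to have strictly positive and $V/V^+$ non-positive Hodge--Tate weights. Once that is pinned down, the rest is an invocation of Flach's lemma and a routine $\varphi=1$ vanishing check.
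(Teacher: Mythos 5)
Your proposal is correct and follows essentially the same path as the paper's own proof: Shapiro's lemma to move to $\Q_p$, the same Hodge--Tate weight bookkeeping (weights $k/2\pm j$ for $V^+$ and $1-k/2\pm j$ for $V/V^+$, with $0\le j<k/2$ giving exactly the right sign separation), Flach's Lemma~2, the equality $H^1_e=H^1_f=H^1_g$ via $\mathbf D_{\cris}(\Q_p,V)^{\varphi=1}=0$, and finally the complete splitting of $p$ in $K[n]$ for the ring class field case. The only cosmetic difference is that you state up front that the away-from-$p$ conditions coincide by definition, which the paper leaves implicit.
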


Now we go back to the setting of Section \ref{subsec:tameeulersystem}, from which we borrow the notation.

\begin{proposition}\label{prop:in-Sel}
For every $n\in\mathcal{S}$ and for $i=1,2$ the class $\kappa_{n,f,\eta_i}$ lies in the Bloch-Kato Selmer group $\Sel_{\BK}(K[n],V_{f,\eta_i})$. In particular, 
the class $\kappa_{n,f,\eta_2}$ lies in $\Sel_{\ord}(K[n],V_{f,\eta_2})$.
\begin{proof}
Fix $n\in\mathcal{S}$ and $v$ a finite prime of $K[n]$. If $v\nmid p$, then 
it follows from the Weil conjectures that $V_{f,\eta_i}$ is pure of weight $-1$, and hence
\begin{equation}\label{eq:ur=0}
H^1_f(K[n]_v,V_{f,\eta_i})={\rm ker}\bigl(H^1(K[n]_v,V_{f,\eta_i})\rightarrow H^1(K[n]_v^{\rm nr},V_{f,\eta_i})\bigr)=0.
\end{equation}
By \cite[Corollary 1.3.3(i)]{Rub2000} and local Tate duality (using the fact that the $G_K$-representations $V_{f,\eta_i}$ are conjugate self-dual), it follows that
\[
H^0(K[n]_v,V_{f,\eta_i})=H^2(K[n]_{\overline{v}},V_{f,\eta_i})=0.
\]
Repeating the argument switching the roles of $v$ and $\overline{v}$, from (\ref{eq:ur=0}) and \cite[Corollary 1.3.3(ii)]{Rub2000} we obtain
\[
H^1(K[n]_v,V_{f,\eta_i})=H^1_f(K[n]_v,V_{f,\eta_i})=0. 
\]
It follows that the inclusion ${\rm res}_v(\kappa_{n,f,\eta_i})\in H^1_f(K[n]_v,V_{f,\eta_i})$ is automatic for $i\in\{1,2\}$.

Now suppose $v\mid p$. As noted in \cite[Proposition 3.2]{BSVast1}, the classes $\kappa_{Nn,\mathbf{r}}$ are geometric at $p$. This applies to the classes $\kappa_{N,\Delta_n,\mathbf{r}}$ as well. Hence, the classes ${\rm res}_v(\kappa_{n,f,\eta_i})\in H^1(K[n]_v,V_{f,\eta_i})$ land in $H^1_g(K[n]_v,V_{f,\eta_i})$. As remarked in the proof of the above Lemma \ref{lemma: descriptionSelmer},
$H^1_g(K[n]_v,V_{f,\eta_i})$ agrees with the Bloch-Kato subspace $H^1_f(K[n]_v,V_{f,\eta_i})$ in our case, so that indeed the class $\kappa_{n,f,\eta_i}$ lies in the Bloch-Kato Selmer group $\Sel_{\BK}(K[n],V_{f,\eta_i})$ for $i\in\{1,2\}$. 

We have that $\eta_2$ has $\infty$-type $(-(l-m)/2,(l-m)/2)$, so that (thanks to the balanced condition on $(k,l,m)$) the above Lemma \ref{lemma: descriptionSelmer} applies and we have $\Sel_{\BK}(K[n],V_{f,\eta_2})=\Sel_{\ord}(K[n],V_{f,\eta_2})$.
\end{proof}
\end{proposition}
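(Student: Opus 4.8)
The plan is to verify, one finite place $v$ of $K[n]$ at a time, that the restriction of $\kappa_{n,f,\eta_i}$ lies in the Bloch--Kato local condition $H^1_f(K[n]_v,V_{f,\eta_i})$, treating the places $v\nmid p$ and $v\mid p$ by completely different arguments, and then to upgrade the conclusion to $\Sel_{\ord}$ when $i=2$ by invoking Lemma~\ref{lemma: descriptionSelmer}.

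For a place $v\nmid p$ I would show that the ambient group $H^1(K[n]_v,V_{f,\eta_i})$ already vanishes, so that the local condition at $v$ is automatically satisfied. The input is purity: by the Weil conjectures $V(f)$ is pure of weight $k-1$, hence $V(f)(1-k/2)$ is pure of weight $-1$, and since the Galois character $\hat\eta_i^{-1}$ is pure of weight $0$, the whole representation $V_{f,\eta_i}$ is pure of weight $-1$. As $-1\neq 0$, the geometric Frobenius at $v$ admits no eigenvalue equal to $1$ on $V_{f,\eta_i}^{I_v}$, so $H^0(K[n]_v,V_{f,\eta_i})=0$ and the unramified subgroup $H^1_f(K[n]_v,V_{f,\eta_i})$ is zero as well. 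Using that $V_{f,\eta_i}$ is conjugate self-dual over $G_K$, local Tate duality then identifies $H^2(K[n]_v,V_{f,\eta_i})$ with the dual of $H^0(K[n]_{\bar v},V_{f,\eta_i})$, which also vanishes by purity; feeding $H^0=H^2=0$ into the local Euler characteristic formula (as packaged in \cite[Corollary~1.3.3]{Rub2000}) forces $H^1(K[n]_v,V_{f,\eta_i})=0$, and exchanging the roles of $v$ and $\bar v$ covers every place above a rational prime $\neq p$.

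For a place $v\mid p$ the key fact is that the geometric origin of the diagonal classes is preserved by the whole construction. By \cite[Proposition~3.2]{BSVast1} the classes $\kappa_{Nn,\mathbf{r}}$ are geometric at $p$, and the same is true of the variants $\kappa_{N,\Delta_n,\mathbf{r}}$ of Section~\ref{subsec: improved diagonal classes}, since they are obtained from $\kappa_{Nn,\mathbf{r}}$ by push-forward along the finite map $(\mu_n,q_n)$ followed by K\"unneth projection, all within the étale cohomology of smooth varieties of good reduction at $p$. Every further operation producing $\kappa_{n,f,\eta_i}$ — the degeneracy push-forward $(\pi_2,1,1)_\ast$, projection through the test vectors, K\"unneth decomposition, the surjections $\underline{\nu}_n$ and $\underline{\nu}_n^{\mathbf{c}}$ of Corollary~\ref{cor:LLZ}, Shapiro's lemma, and corestriction from $K[cn]$ to $K[n]$ — commutes with restriction to a decomposition group at $p$, so one gets $\mathrm{res}_v(\kappa_{n,f,\eta_i})\in H^1_g(K[n]_v,V_{f,\eta_i})$. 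Finally, exactly as in the proof of Lemma~\ref{lemma: descriptionSelmer}, one checks $\mathbf{D}_{\cris}(K[n]_v,V_{f,\eta_i})^{\varphi=1}=0$ and the analogous vanishing for the Tate dual, so that $H^1_e=H^1_f=H^1_g$ at $v$; hence $\mathrm{res}_v(\kappa_{n,f,\eta_i})\in H^1_f(K[n]_v,V_{f,\eta_i})$. Combining the two cases gives $\kappa_{n,f,\eta_i}\in\Sel_{\BK}(K[n],V_{f,\eta_i})$ for $i=1,2$.

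It then remains, for $i=2$, to replace $\Sel_{\BK}$ by $\Sel_{\ord}$. From the $\infty$-types $(1-l,0)$ and $(1-m,0)$ of $\psi_1$ and $\psi_2$ one computes that $\eta_2=\psi_1\psi_2^{c}\lvert\cdot\rvert^{(l+m-2)/2}$ has $\infty$-type $(-(l-m)/2,(l-m)/2)$; the balanced condition $r_2<r_1+r_3$ reads $l-m<k-2$, so $0\le(l-m)/2<k/2$, and Lemma~\ref{lemma: descriptionSelmer} yields $\Sel_{\BK}(K[n],V_{f,\eta_2})=\Sel_{\ord}(K[n],V_{f,\eta_2})$ since $n$ is coprime to $p$. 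I expect the genuinely delicate step to be the analysis at $v\mid p$: one has to make sure that being ``geometric at $p$'' truly survives the entire chain of cohomological operations defining $\kappa_{n,f,\eta_i}$, and that the identification $H^1_g=H^1_f$ is available for these specific $G_K$-twists; the rest is routine bookkeeping with purity and local duality.
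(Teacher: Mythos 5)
Your proposal is correct and follows essentially the same route as the paper: vanishing of $H^1(K[n]_v,V_{f,\eta_i})$ for $v\nmid p$ via purity, conjugate self-duality, local Tate duality and Rubin's Corollary~1.3.3; the geometric-at-$p$ property of the diagonal classes plus $H^1_e=H^1_f=H^1_g$ for $v\mid p$; and the $\infty$-type computation together with the balanced condition to pass from $\Sel_{\BK}$ to $\Sel_{\ord}$ for $i=2$. One small slip: under the paper's normalization $V(f)=V_{N_f}(f)$ is the \emph{dual} of Deligne's representation, hence pure of weight $1-k$ (not $k-1$); the twist $V(f)(1-k/2)$ then has weight $(1-k)-2(1-k/2)=-1$ as you ultimately assert, so the conclusion is unaffected.
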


Till the end of the section, let us denote by $L$ a finite extension of $K$ containing the Fourier coefficients of $f$ and the values of the Hecke character $\chi$. We also let $\mathfrak{P}$ be the prime of $L$ above $p$ induced by the fixed choice of embedding $\iota_p:\bar{\Q}\hookrightarrow\bar{\Q}_p$ and we think of our field of coefficients $E$ as $E=L_{\mathfrak{P}}$ (the completion of $L$ at $\mathfrak{P}$). 

\begin{defi}
\label{def: big image}
We say that $f$ has big image at $\mathfrak{P}$ if the image of $G_{\Q}$ in $\Aut_{\cl{O}}T(f)$ contains a conjugate of $\SL_2(\Z_p)$. We also say that $f$ has big image at $p$ if we do not want to be explicit about the choice of $L$ and $\mathfrak{P}$.    
\end{defi}

\begin{lemma}
Assume that the modular form $f$ is not of CM type. Then the $G_K$-representation $V:=V_{f,
\chi}$ (with $\cl{O}$-lattice $T=T_{f,\chi}$) satisfies the following properties:
\begin{enumerate}[(i)]
    \item $V$ is absolutely irreducible as a representation of $G_K$;
    \item there exists $\sigma\in\Gal(\bar{K}/K^{\mathrm{ ab}})$ such that $\dim_E\big(V/(\sigma-1)V\big)=1$;
    \item there exists $\gamma\in\Gal(\bar{K}/K^{\mathrm{ab}})$ such that $V^{\gamma=1}=0$.
\end{enumerate}
If, moreover, $f$ has big image at $\mathfrak{P}$, the $\cl{O}$-module $H^1(K[n],T)$ is $\cl{O}$-free for all $n\geq 1$.
\begin{proof}
This is a direct consequence of \cite[Propositions 7.1.4 and 7.1.6]{LLZ2015}
\end{proof}
\end{lemma}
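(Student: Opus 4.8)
The plan is to deduce (i)--(iii) from the classical (Ribet-type) description of the image of $\rho_f$ for a non-CM newform, and to deduce the freeness statement from a mod-$p$ irreducibility argument; this is how \cite[Propositions 7.1.4 and 7.1.6]{LLZ2015} proceed, and I indicate below the shape of the argument. The basic reduction is that the twisting factors become invisible over $K^{\mathrm{ab}}$: the $p$-adic Galois character $\hat\chi^{-1}$ factors through $\Gal(K^{\mathrm{ab}}/K)$ by construction, and the cyclotomic character factors through $\Gal(\Q^{\mathrm{ab}}/\Q)\subseteq\Gal(K^{\mathrm{ab}}/K)$; hence, writing $H=\Gal(\bar K/K^{\mathrm{ab}})$, the representations $V_{f,\chi}$ and $V(f)$ restrict to isomorphic representations of $H$, and $\det V_{f,\chi}|_H$ is trivial, so the image of $H$ in $\Aut_E V_{f,\chi}$ lies in the relevant copy of $\SL_2$.

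For (i), one notes that $V(f)$ is absolutely irreducible as a $G_\Q$-representation because $f$ is non-CM, and that it stays so after restriction to the index-two subgroup $G_K$ unless $\rho_f\cong\rho_f\otimes\varepsilon_K$, i.e.\ unless $f$ has CM by $K$; this is excluded, and twisting by the character $\hat\chi^{-1}$ does not affect absolute irreducibility. For (ii) and (iii), a theorem of Ribet gives that the image of $G_K$ in $\Aut_{\cl O}T(f)$ is open in $\GL_2(\Z_p)$, so its closed commutator subgroup is open in $\SL_2(\Z_p)$ (here one uses $p\ge 5$); since $H=G_{K^{\mathrm{ab}}}$ surjects onto this commutator subgroup, the image of $H$ contains an open subgroup $U$ of $\SL_2(\Z_p)$. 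One then exhibits explicit elements of $U$: a nontrivial unipotent $\sigma$ (for instance $\smallmat{1}{p^N}{0}{1}$ with $N$ large), for which $\sigma-1$ has rank one and hence $\dim_E V/(\sigma-1)V=1$, giving (ii); and a regular semisimple $\gamma$ of trace $\neq 2$ (for instance a diagonal matrix with entries $u,u^{-1}$, $u\equiv 1$ mod $p^N$, $u\neq 1$), which does not admit $1$ as an eigenvalue and hence satisfies $V^{\gamma=1}=0$, giving (iii). These elements are transported to $\Aut_E V_{f,\chi}$ through the identification of the previous paragraph.

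For the last assertion we additionally assume $f$ has big image at $\mathfrak{P}$, so that $\bar\rho_f(G_\Q)\supseteq\SL_2(\F_p)$. Each $K[n]$ is Galois over $\Q$ with (generalized dihedral, hence) solvable Galois group, so $\bar\rho_f(G_{K[n]})$ is normal in $\bar\rho_f(G_\Q)$ with solvable quotient; intersecting with $\SL_2(\F_p)$ and using that for $p\ge 5$ the only proper normal subgroups of $\SL_2(\F_p)$ are $\{1\}$ and $\{\pm 1\}$, both with non-solvable quotient since $\mathrm{PSL}_2(\F_p)$ is simple, forces $\bar\rho_f(G_{K[n]})\supseteq\SL_2(\F_p)$. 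Therefore the $G_{K[n]}$-module $T/\varpi T$, which is $\bar\rho_f$ tensored with a character, is two-dimensional and absolutely irreducible, so $(T/\varpi T)^{G_{K[n]}}=0$; the long exact cohomology sequence attached to $0\to T\xrightarrow{\varpi}T\to T/\varpi T\to 0$ then shows that $H^1(K[n],T)[\varpi]$ is a quotient of $H^0(K[n],T/\varpi T)=0$, so $H^1(K[n],T)$ is torsion-free, and being finitely generated over the discrete valuation ring $\cl O$ it is free.

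The genuinely delicate part is the group theory behind the two passages from "big image of $G_\Q$ (resp.\ of $G_K$)" to "big image of $G_{K^{\mathrm{ab}}}$", and from "big image of $G_\Q$ modulo $p$" to "big image of $G_{K[n]}$ modulo $p$": this is precisely where the solvability of the relevant extensions over $K$ and the (quasi-)simplicity of $\mathrm{PSL}_2$ enter, and it is carried out in \cite[Propositions 7.1.4 and 7.1.6]{LLZ2015}.
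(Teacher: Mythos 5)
Your proof is correct and follows the same route as the paper, which simply defers to \cite[Propositions 7.1.4 and 7.1.6]{LLZ2015}; what you have done is reconstruct the arguments of those propositions (the reduction of the twist to an identity over $K^{\mathrm{ab}}$, the Ribet-type big image theorem combined with the commutator-subgroup argument and the explicit unipotent/regular semisimple elements for (ii) and (iii), and the solvability-of-$\Gal(K[n]/\Q)$ versus simplicity-of-$\mathrm{PSL}_2(\F_p)$ argument followed by the $\varpi$-torsion long exact sequence for the freeness statement), and they are faithful to the source.
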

Note that it is well-known that if $f$ is not of CM type, then it has big image at almost all finite primes of $L$.

\medskip
Recall that, since we assume that $p\nmid h_K$ ($h_K$ being the class number of $K$), we have $K[1]=K$. 

\begin{nota}
\label{nota: bottom}
With the notation of Section \ref{subsec:tameeulersystem}, we write $\kappa_{f,\eta_i}:=\kappa_{1,f,\eta_i}$.
\end{nota}

\begin{rmk}
Note that (by Theorem \ref{principal:tame}) it follows that $\kappa_{f,\eta_i}=\tilde{\kappa}_{1,f,\eta_i}$.    
\end{rmk}

\medskip
Forthcoming work of Jetchev-Nekov\'{a}\v{r}-Skinner (cf. \cite[Section 8.1]{ACR2023} for an overview) develops the theory of split anticyclotomic Euler systems (or \emph{JNS Euler systems}). This should lead to the following result.

\begin{corollary}
\label{cor: rank1}
In the setting of Theorem \ref{principal:tame}, assume moreover that $p$ is inert in $K$ and that $f$ is not of CM type and has big image at $p$. Then:
\begin{itemize}
    \item [(i)] If $\kappa_{f,\eta_1}\neq 0$, then $\dim_E \Sel_{\BK}(K,V_{f,\eta_1})=1$;
    \item [(ii)] If $\kappa_{f,\eta_2}\neq 0$, then $\dim_E\Sel_{\BK}(K,V_{f,\eta_2})=1$.
\end{itemize}
\end{corollary}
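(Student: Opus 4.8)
The plan is to deduce the Corollary from the anticyclotomic Euler system machinery of Jetchev--Nekov\'{a}\v{r}--Skinner (JNS), surveyed in \cite[Section 8.1]{ACR2023}: given a norm-compatible family of Selmer classes indexed by the ring class fields $K[n]$, $n\in\mathcal{S}$, whose bottom member over $K[1]=K$ is nonzero, their method produces an upper bound of $1$ for the dimension of the relevant Selmer group, while the lower bound $1$ is immediate from the nonzero class. So the task reduces to checking that the family $\{\kappa_{n,f,\eta_i}\}_{n\in\mathcal{S}}$ of Theorem \ref{principal:tame} meets all of the hypotheses of that machinery.

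First I would assemble the structural inputs. Fix $i\in\{1,2\}$ and set $V=V_{f,\eta_i}$, $T=T_{f,\eta_i}$. As recalled in the proof of Proposition \ref{prop:in-Sel}, $V$ is conjugate self-dual as a $G_K$-representation. Since $f$ is not of CM type and has big image at $p$, the lemma preceding this corollary (via \cite[Propositions 7.1.4 and 7.1.6]{LLZ2015}) supplies exactly the big-image hypotheses the argument requires: $V$ is absolutely irreducible over $G_K$, there is $\sigma\in\Gal(\bar K/K^{\mathrm{ab}})$ with $\dim_E V/(\sigma-1)V=1$ and $\gamma\in\Gal(\bar K/K^{\mathrm{ab}})$ with $V^{\gamma=1}=0$, and $H^1(K[n],T)$ is $\cl{O}$-free for all $n\geq 1$. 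Because $p\nmid h_K$ we have $K[1]=K$, so the bottom class $\kappa_{f,\eta_i}=\kappa_{1,f,\eta_i}$ genuinely lives over $K$. Next, Theorem \ref{principal:tame} provides the Euler system norm relation $\cor_{K[nq]/K[n]}(\kappa_{nq,f,\eta_i})=P_{i,\frk{q}}(\Fr_{\frk{q}}^{-1})\,\kappa_{n,f,\eta_i}$ for $n,nq\in\mathcal{S}$, with $P_{i,\frk{q}}(X)=\det(1-X\cdot\Frob_{\frk{q}}^{-1}\mid V^*(f)(1)\otimes\hat{\eta}_i)$, so $\{\kappa_{n,f,\eta_i}\}_{n\in\mathcal{S}}$ is precisely a split anticyclotomic Euler system for $T_{f,\eta_i}$ along the tower of ring class fields $K[n]$. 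Finally, Proposition \ref{prop:in-Sel} shows each class lies in $\Sel_{\BK}(K[n],V_{f,\eta_i})$; for $i=2$ this coincides with the ordinary Greenberg Selmer group by Lemma \ref{lemma: descriptionSelmer}, while for $i=1$ one works equivalently with the relaxed condition at $p$, so that in either case the local condition at the unique prime of $K$ above $p$ is of the propagated type the Euler system method handles.

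With these inputs in hand I would invoke the JNS bounding theorem to conclude that $\kappa_{f,\eta_i}\neq 0$ forces $\dim_E\Sel_{\BK}(K,V_{f,\eta_i})=1$, which is the assertion. I expect the main difficulty to be bookkeeping rather than mathematics: because the JNS theory is still in preparation, the argument is a reduction to a not-yet-published black box, and the delicate point is to match precisely the hypotheses of their main theorem --- residual absolute irreducibility and $p$-distinguishedness (Assumption \ref{introass}(ii)), the big-image conditions above, $K[1]=K$, and whatever genericity is demanded of the Euler factors $P_{i,\frk{q}}$ at split primes --- with what we have arranged. I would also emphasize that the inert hypothesis on $p$ is no obstruction here: we only move in the horizontal (ring class field) direction, where the relevant primes $q\in\mathcal{S}$ are split in $K$, and the inert condition would bite only if one additionally wanted the vertical anticyclotomic $\Z_p$-direction, which is not needed for this rank-one statement. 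A fully self-contained alternative would be a Kolyvagin-style derivative-class argument over $K[n]$ with $n$ a product of admissible primes split in $K$, but this would amount to reproving the JNS machine.
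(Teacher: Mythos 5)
Your proposal is correct and takes the same route the paper does: the paper itself gives no proof, instead stating just before the corollary that the forthcoming JNS theory of split anticyclotomic Euler systems ``should lead to'' the result, and you are spelling out exactly the hypotheses (big image from the preceding lemma, conjugate self-duality, $K[1]=K$, the norm relations of Theorem \ref{principal:tame}, membership in the Selmer group from Proposition \ref{prop:in-Sel}, and the comparison of local conditions at $p$) that feed into that black box. Your remark that the inert condition only obstructs the vertical direction, which is not needed here, also matches the paper's framing in the introduction.
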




\section{The relevant \texorpdfstring{$p$}{p}-adic \texorpdfstring{$L$}{L}-functions}
\label{sec: padicL}
In this section we introduce the $p$-adic $L$-functions that will play a role in the sequel of the paper. Most of the results come from our previous work \cite{Mar2024a}.

\subsection{Hida families}\label{subsubsec:Hida}
Let $\Lambda =  \mathbb Z_p[\![1+p\mathbb Z_p]\!]$ and let $\cW=\rm{Spf} (\Lambda)^{\rm rig}$
be the weight space. For any finite extension $E$ of $\bb{Q}_p$ with ring of integers $\cl{O}=\cl{O}_E$, we write $\Lambda_E:=\cl{O}_E[\![1+p\mathbb Z_p]\!]$. 

\medskip
We have $\cW(E)=\Hom_{\text{cont}}(1+p\bb{Z}_p,E^\times)$. Points of the form $\nu_{k,\epsilon}(n)= \epsilon(n)n^{k-2}$, where $k\in\Z_{\geq 1}$ and $\epsilon$ is a finite order character, will be called \emph{arithmetic}. We refer to $k$ as the \emph{weight} of $\nu_{k,\epsilon}$. Arithmetic points of the form $\nu_{k,\epsilon}$ with $k\geq 2$ will be called \emph{classical}.

More generally, let $\cl{R}$ be a noetherian local ring which is a flat $\Lambda$-algebra and let $\cW_{\cl{R}}=\rm{Spf}(\cl{R})^{\rm rig}$. A point $x\in\cW_{\cl{R}}(E)$ will be called \emph{arithmetic} if it lies above an arithmetic point $\nu_{k,\epsilon}$ and \emph{classical} if it lies above a classical point. Again, we refer to $k$ as the \emph{weight} of $x$.

Let $N$ be a positive integer coprime to $p$. A \emph{Hida family} of tame level $N$ and character $\chi:(\bb{Z}/Np\bb{Z})^\times\rightarrow E^\times$ is a formal $q$-expansion
\[
\Hf = \sum_{n\geq 1} a_n(\hf) q^n \in \Lambda_{\hf}[\![q]\!],
\]
where $\Lambda_\Hf$ is a finite flat $\Lambda$-algebra, such that, for any classical point $x$ lying over some $\nu_{k,\epsilon}$, the corresponding specialization is a $p$-ordinary eigenform $\Hf_x\in S_k(Np^s,\chi\epsilon\omega^{2-k})$. As above, we have denoted by $k$ the weight of $x$ and we can take $s=\max\lbrace 1, \ord_p(\cond(\epsilon))\rbrace$. We say that a Hida family $\hf$ is \emph{primitive} if the specializations $\Hf_x$ at arithmetic points $x$ are $p$-stabilized newforms. We say that it is \emph{normalized} if $a_1(\hf)=1$.

Let $\Hf$ be a normalized primitive Hida family of tame level $N_\Hf$. For each arithmetic point $x\in \cW_{\Lambda_\hf}(\overline{\bb{Q}}_p)$, let $\Hf_x$ denote the specialization of $\Hf$ at $x$ and let $f_x$ be the corresponding newform. There exists a locally-free rank-two $\Lambda_\Hf$-module $\mathbb{V}_\Hf$ equipped with a continuous action of $G_\bb{Q}$ such that, for any arithmetic point $x\in \cW_{\Lambda_\Hf}(E)$, the corresponding specialization $\bb{V}_\Hf\otimes_{\Lambda_\Hf,x}E$ recovers the $G_\bb{Q}$-representation $V(f_x)$ attached to $f_x$. In particular, the representation $\bb{V}_\Hf$ is unramified at any prime $q\nmid p\cdot N_\Hf$ and $\Tr(\Fr_q)=a_q(\Hf)$. We refer to $\bb{V}_\Hf$ as the \emph{big Galois representation attached to $\Hf$}. If for some (equivalently all) arithmetic point $x_0\in \cW_{\Lambda_\Hf}(\overline{\bb{Q}}_p)$ the $G_\bb{Q}$-representation $T(f_{x_0})$ attached to $f_{x_0}$ is residually irreducible, then $\bb{V}_\Hf$ is a free $\Lambda_\Hf$-module.

\subsection{Families of theta series of infinite \texorpdfstring{$p$}{p}-slope}
\label{subsection: theta families}
In \cite[Section 2]{Mar2024a}, we defined \emph{generalized} $\Lambda$-adic eigenforms as suitable formal $q$-expansions in $\cl{R}[\![q]\!]$ where $\cl{R}$ is a noetherian local ring which is an integral domain and a flat $\Lambda$-algebra (but it is not necessarily finite as $\Lambda$-algebra). Such formal $q$-expansions satisfy the property that there exists a \emph{large} subset $\Omega$ of integral classical weights in $\cl{W}_{\cl{R}}$ such that for every $x\in\Omega$, the specialization at $x$ is the $q$-expansion of a classical eigenform (we drop the ordinarity condition).

\medskip
The motivation to introduce such a notion was afforded by the construction of generalized $\Lambda$-adic eigenforms interpolating theta series attached to Hecke characters of a quadratic imaginary field $K$ where $p$ is inert.
We identify the completion of $K$ at $p$ with $\Q_{p^2}$ (with ring of integers $\Z_{p^2}$). Recall the decomposition
\[
\Z_{p^2}^\times=\mu_{p^2-1}\times (1+p\Z_{p^2})
\]
induced by the Teichm\"{u}ller lift. As before, we keep assuming that $p\nmid h_K$ (where $h_K$ is the class number of $K$). 

\medskip
We let $K_\infty$ denote the (unique) $\Z_p^2$-extension of $K$ and we let $\Gamma_\infty:=\Gal(K_\infty/K)$.

\medskip
We refer to \cite[Section 2.1]{Mar2024a} (note that the conventions for class field theory and infinity type are not the same as in this article) for the construction of the (unique) Hecke character $\psi_0$ (denoted $\langle\lambda\rangle$ in \cite{Mar2024a}) of $K$ of $\infty$-type $(-1,0)$ and conductor $(p)$ whose $p$-adic avatar factors through $\Gamma_\infty$ and yields an identification $\Gamma_\infty\cong 1+p\Z_{p^2}$. Existence and uniqueness of such a character follow from our assumption on the class number of $K$

\medskip
Given an algebraic Hecke character $\psi$ of $K$ of $\infty$-type $(1-\nu_0,0)$ (for some $\nu\in\Z_{\geq 1}$) and conductor $\frk{f}$ (coprime to $p$), we can write $\psi=\xi\cdot\psi_0^{\nu_0-1}$ as a Hecke character modulo $\frk{f}p$, with $\xi$ a ray class character of conductor dividing $\frk{f}p$. It is then possible to obtain $p$-adic families of theta series of infinite $p$-slope passing through $\theta_{\psi}$ as follows.

The coefficient ring for such a family will be the $\Lambda$-algebra $\Lambda_\infty:=\cl{O}_E[\![\Gamma_\infty]\!]$, with $\Lambda$-algebra structure induced by the natural inclusion $1+p\Z_p\subset 1+p\Z_{p^2}\cong\Gamma_\infty$.

\begin{defi}
\label{defGHida}
We define
\[
\pmb{\theta}_\xi:=\sum_{(\mathfrak{a},\frk{f}p=1)}\xi\psi_0(\mathfrak{a})[\mathfrak{a}]\cdot q^{N_{K/\Q}(\mathfrak{a})}\in\Lambda_\infty[\![q]\!].
\]
Here $[\frk{a}]$ denotes the projection to $\Gamma_\infty$ of the element of the ray class group of conductor $\frk{f}p^\infty$ which corresponds to the class of the ideal $\frk{a}$ via class field theory. 
\end{defi}

Specializing such a $q$-expansion at points of $\cl{W}_{\Lambda_\infty}$ which correspond to the characters $\Gamma_\infty\to\C_p^\times$ of the form $\gamma\mapsto\hat{\psi}_0(\gamma)^{\nu-2}$ for integers $\nu\geq 1$, one obtains the $q$-expansion of the theta series associated with the character $\xi\psi_0^{\nu-1}$, which is an eigenform of level $N_{K/\Q}(\frk{f})\cdot D_K\cdot p^2$ and weight $\nu$. In particular, for $\nu=\nu_0$, we obtain the $p$-depletion $\theta_\psi^{[p]}$ of $\theta_\psi$. This essentially shows that $\boldsymbol{\theta}_\xi$ is a generalized $\Lambda$-adic eigenform (cf. \cite[Lemma 4.21]{Mar2024a}).

\subsection{Factorization of triple product \texorpdfstring{$p$}{p}-adic \texorpdfstring{$L$}{L}-functions}
\label{factorizationsectionintro}
Assume that $p\geq 5$ and let $\Hf$ be a Hida family of tame level $N_\Hf$ with trivial tame character and coefficients in $\Lambda_\Hf$. We can and will assume that $\Lambda_\Hf$ is a finite flat $\Lambda_E$-algebra. We also impose the following assumption on $\Hf$.

\begin{ass}[CR]
\label{CRass}
The residual Galois representation $\bar{\bb{V}}_\Hf$ of the big Galois representation $\bb{V}_\Hf$ attached to $\Hf$ is absolutely irreducible and $p$-distinguished.
\end{ass}

Fix $K/\Q$ a quadratic imaginary field of odd discriminant $-D_K$ and two ray class characters $\xi_1$ and $\xi_2$ of $K$, that we can view as valued in $E$.

\begin{ass}
\label{CMass}
The following assumptions are in force.
\begin{enumerate}[(i)]
    \item $p$ is inert in $K$.
    \item $N_\Hf$ is squarefree, coprime to $D_K$ and with an odd number of prime divisors which are inert in $K$ (we are in the so-called \emph{definite} case).
    \item $\xi_i$ has conductor dividing $(cp)$, with $c\in\Z_{\geq 1}$, $(c, p\cdot D_K\!\cdot\! N_\Hf)=1$, $c$ not divisible by primes inert in $K$.
    \item $\xi_1$ and $\xi_2$ are not induced by Dirichlet characters and the central characters of $\xi_1$ and $\xi_2$ are inverse to each other, so that $\alpha:=\xi_1\xi_2$ and $\beta:=\xi_1\xi_2^\mathbf{c}$ are ring class characters of $K$ (here $\langle \mathbf{c}\rangle =\Gal(K/\Q)$).
\end{enumerate}    
\end{ass}

\medskip

We let $\Hg:=\pmb{\theta}_{\xi_1}$ and $\Hh:=\pmb{\theta}_{\xi_2}$. In \cite[Section 5]{Mar2024a} we defined an improved generalized triple product $p$-adic $L$-function, denoted $\mathcal{L}_p^f(\Hf,\Hg,\Hh)$, whose square interpolates the (algebraic part of the) central values for the triple product $L$-function attached to triples of specializations of $(\Hf,\Hg,\Hh)$ in the $\Hf$-dominant range (i.e., the weight of the specialization of $\Hf$ is at least equal to the sum of the weights of the specializations of $\Hg$ and $\Hh$).

The $p$-adic $L$-function $\mathcal{L}_p^f(\Hf,\Hg,\Hh)$ is an element of the ring
\[
\cl{R}_{\Hf\Hg\Hh}:=\Lambda_\Hf\hat{\otimes}_{\cl{O}_E}\Lambda_\infty\hat{\otimes}_{\cl{O}_E}\Lambda_\infty
\]
and depends on a choice of \emph{test vectors} for the families $\Hg$ and $\Hh$ and of a generator of the so-called congruence ideal of $\Hf$ (Assumption \ref{CRass} ensures indeed that the congruence ideal of $\Hf$ is principal).

\begin{rmk}
\label{rmk: notseriousass}
Typically in point (ii) of Assumption \ref{CMass} one only asks that $N_{\Hf}$ admits a factorization $N_{\Hf}=N_{\Hf}^+ N_{\Hf}^-$ with $N_{\Hf}^+$ (resp. $N_{\Hf}^-$) only divisible by primes split (resp. inert) in $K$ and with $N_{\Hf}^-$ squarefree with an odd number of prime divisors. We have inherited the stronger condition that $N_{\Hf}$ is squarefree from \cite{Mar2024a}, where this is used merely to simplify the shape of the \emph{test vectors} that appear in the construction of the improved triple product $p$-adic $L$-function. A similar discussion applies to the point (iii) of Assumption \ref{CMass}, concerning the fact that $\xi_i$ are supposed to have conductor divisible only by primes split in $K$. With some extra work and notation we could certainly allow $N_\Hf^+$ non squarefree and the conductor of $\xi_i$ divisible by primes inert in $K$.
\end{rmk}

Let $H_n$ denote the ring class field of $K$ of conductor $cp^n$ for every $n\in\Z_{\geq 0}$ and let $H_\infty$ be the union of all the $H_n$'s. Let $\mathscr{G}_\infty:=\Gal(H_\infty/K)$. We can identify the maximal $\Z_p$-free quotient $\Gamma^-$ of $\mathscr{G}_\infty$ with the Galois group of the anticyclotomic $\Z_p$-extension of $K$ and there is an exact sequence $0\to\Delta_c\to\mathscr{G}_\infty\to\Gamma^-\to 0$ of abelian groups with $\Delta_c$ a finite group and $\Gamma^-\cong\Z_p$. We fix a non-canonical isomorphism $\mathscr{G}_\infty\cong\Delta_c\times\Gamma^-$ once and for all.

Then $\alpha$ (resp. $\beta$) factors through $\mathscr{G}_\infty$ and we write it as $(\alpha_t,\alpha^-)$ (resp. $(\beta_t,\beta^-)$) according to the fixed isomorphism $\mathscr{G}_\infty\cong\Delta_c\times\Gamma^-$.

For $k\in\Z_{\geq 2}\cap 2\Z$, let $\mathfrak{X}^{\mathrm{crit}}_{p,k}$ denote the set of continuous characters $\hat{\nu}:\Gamma^-\to\C_p^{\times}$ such that the associated algebraic Hecke character $\nu:\bb{A}_K^\times/K^\times\to\C^\times$ has infinity type $(j,-j)$ with $|j|<k/2$. 

\medskip
The main result of \cite[Section 5]{Mar2024a} is the following factorization theorem for the \emph{anticyclotomic projection} $\mathcal{L}_{p,ac}^f(\Hf,\Hg,\Hh)$ of $\mathcal{L}_p^f(\Hf,\Hg,\Hh)$, which is obtained mapping $\mathcal{L}_p^f(\Hf,\Hg,\Hh)$ to an element of $\Lambda_{\Hf}\hat{\otimes}_{\cl{O}_E}\cl{O}_E[\![\Gamma^-]\!]\hat{\otimes}_{\cl{O}_E}\cl{O}_E[\![\Gamma^-]\!]$ as follows.

\medskip
We consider the automorphism $\sigma$ of $\Lambda_\infty\hat{\otimes}_{\cl{O}_E}\Lambda_\infty$ given by the assignment
\[
[\gamma]\otimes[\delta]\mapsto[\gamma^{1/2}\delta^{1/2}]\otimes[\gamma^{1/2}\delta^{-1/2}]
\]
on group-like elements (note it is important that $p\neq 2$ for this to be a well-defined automorphism). The natural projection $\Gamma_\infty\twoheadrightarrow\Gamma^-$ can be described as $\gamma\mapsto\gamma^{1/2}(\gamma^\mathbf{c})^{-1/2}$. Accordingly, we get a morphism $\tau\colon\Lambda_\infty\twoheadrightarrow\cl{O}_E[\![\Gamma^-]\!]$. Then $\mathcal{L}_{p,ac}^f(\Hf,\Hg,\Hh)$ is obtained applying $(1_{\Lambda_\Hf},(\tau,\tau)\circ\sigma)$ to $\mathcal{L}_p^f(\Hf,\Hg,\Hh)$.

\begin{theorem}[cf. Theorem 5.25 of \cite{Mar2024a}]
\label{factorizationthmintro}
In the above setting, it holds:
\[
\mathcal{L}^f_{p,ac}(\Hf,\Hg,\Hh)=\pm\mathscr{A}_{\Hf\Hg\Hh}\cdot\Big(\alpha^-\big(\Theta_\infty^{\mathrm{Heeg}}(\Hf,\alpha_t)\big)~\hat{\otimes}~\beta^-\big(\Theta_\infty^{\mathrm{Heeg}}(\Hf,\beta_t)\big)\Big).
\]
\end{theorem}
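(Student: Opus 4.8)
The proof is carried out in \cite[Section~5]{Mar2024a}; here I outline the strategy. The conceptual input is Artin formalism. For a classical specialization $f_x$ of $\Hf$ and for theta specializations $\theta_{\psi_1},\theta_{\psi_2}$ of $\Hg=\pmb{\theta}_{\xi_1}$, $\Hh=\pmb{\theta}_{\xi_2}$ (so that $\psi_i$ has tame part $\xi_i$ and $\infty$-type matching the chosen point), the decomposition of Artin motives $\Ind_K^\Q(\psi_1)\otimes\Ind_K^\Q(\psi_2)\cong\Ind_K^\Q(\psi_1\psi_2)\oplus\Ind_K^\Q(\psi_1\psi_2^{\mathbf{c}})$ gives a factorization of complex $L$-functions
\[
L(f_x\otimes\theta_{\psi_1}\otimes\theta_{\psi_2},s)=L(f_x/K,\psi_1\psi_2,s)\cdot L(f_x/K,\psi_1\psi_2^{\mathbf{c}},s)
\]
at all the relevant central points. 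The square of $\mathcal{L}_p^f(\Hf,\Hg,\Hh)$ interpolates the algebraic parts of the left-hand side over the $\Hf$-dominant classical points, while the squares of the Chida--Hsieh theta elements $\Theta_\infty^{\mathrm{Heeg}}(\Hf,\alpha_t)$ and $\Theta_\infty^{\mathrm{Heeg}}(\Hf,\beta_t)$ (in the sense of \cite{CH2018}) interpolate the algebraic parts of the two factors on the right. So the plan is: match interpolation formulae on a Zariski-dense set of points, obtain an identity of \emph{squares} inside an integral domain, and extract a square root.

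The first concrete step is to unwind the change of variables $(\tau,\tau)\circ\sigma$. Since $p\neq 2$, the automorphism $\sigma$ of $\Lambda_\infty\hat{\otimes}_{\cl{O}_E}\Lambda_\infty$ and the projections $\tau\colon\Lambda_\infty\twoheadrightarrow\cl{O}_E[\![\Gamma^-]\!]$ are well defined, and they send a classical point of $\cl{W}_{\Lambda_\infty}\times\cl{W}_{\Lambda_\infty}$ (a pair of weights/nebentypes for $(\Hg,\Hh)$) to a pair of anticyclotomic characters $\hat{\nu}_1,\hat{\nu}_2\in\mathfrak{X}^{\mathrm{crit}}_{p,k_x}$. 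Tracking the definition of $\pmb{\theta}_{\xi_i}$ (which builds in the character $\psi_0$) through this recipe, one finds that $\psi_1\psi_2$ becomes $\alpha_t$ times an anticyclotomic character and $\psi_1\psi_2^{\mathbf{c}}$ becomes $\beta_t$ times an anticyclotomic character, with $\alpha=\xi_1\xi_2$, $\beta=\xi_1\xi_2^{\mathbf{c}}$; the half-powers in $\sigma$ absorb exactly the normalizing twists by $|\cdot|^{(l+m-2)/2}$ that already appeared in the characters $\eta_1,\eta_2$ introduced earlier. This identifies the interpolation loci on the two sides.

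The second step is the numerical comparison. One plugs these characters into the $\Hf$-dominant interpolation formula for $\mathcal{L}_p^f(\Hf,\Hg,\Hh)$ from \cite[Section~5]{Mar2024a} on one hand, and into the Chida--Hsieh interpolation formula for $(\Theta_\infty^{\mathrm{Heeg}})^2$ on the other. Both produce the same central $L$-value (by the factorization above) multiplied by explicit archimedean $\Gamma$-factors, local factors at $p$, epsilon-factors, and a period. One checks that the ratio of the two families of fudge factors is independent of the $(\Hg,\Hh)$-variables, depends only on $\Hf$ (and on the fixed test vectors and the chosen generator of the congruence ideal of $\Hf$), and is interpolated by a single element $\mathscr{A}_{\Hf\Hg\Hh}\in\Frac(\Lambda_\Hf)$. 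Having matched squares on a dense set, by density and the domain property of $\cl{R}_{\Hf\Hg\Hh}$ one obtains $\mathcal{L}_{p,ac}^f(\Hf,\Hg,\Hh)^2=\mathscr{A}_{\Hf\Hg\Hh}^2\cdot\big(\alpha^-(\Theta_\infty^{\mathrm{Heeg}}(\Hf,\alpha_t))\,\hat{\otimes}\,\beta^-(\Theta_\infty^{\mathrm{Heeg}}(\Hf,\beta_t))\big)^2$ as an identity, and a square root yields the statement up to a global sign.

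The main obstacle is the numerical comparison at $p$. Because $\Hg$ and $\Hh$ have infinite $p$-slope, the local zeta integrals at $p$ entering the computation of the triple product $p$-adic $L$-function are not of the usual ordinary shape; one must instead use the explicit description of the improved $p$-adic $L$-function built from the $p$-depleted theta series $\theta_\psi^{[p]}$ and reconcile the resulting $p$-adic multiplier with the one appearing in \cite{CH2018}. It is precisely this mismatch that confines the argument to the $\Hf$-dominant range and that makes the identification of $\mathscr{A}_{\Hf\Hg\Hh}$ delicate. A secondary but genuine point is the sign: the equality of squares leaves a global $\pm$, which one removes by evaluating both sides at one well-chosen classical point where the central $L$-value is nonzero and the theta elements are explicitly computable.
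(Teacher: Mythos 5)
The paper does not prove this theorem; it states it as a direct citation of \cite[Theorem 5.25]{Mar2024a}, so there is no in-paper argument to compare against. Your outline correctly captures the expected strategy of the cited proof: factor the complex triple-product $L$-function via Artin formalism into two Rankin--Selberg $L$-functions over $K$, match the interpolation formulae of the squares on a Zariski-dense set of $\Hf$-dominant classical points, absorb the ratio of fudge factors into an element $\mathscr{A}_{\Hf\Hg\Hh}$ depending only on $\Hf$ and the fixed test vectors, and extract a square root in the integral domain, which leaves the global $\pm$ that indeed remains in the statement (you mention removing the sign at the end, but note the theorem retains it). Your identification of where the real work lies --- reconciling the local $p$-adic multipliers coming from the $p$-depleted theta series with those in Chida--Hsieh's interpolation formula, and the resulting restriction to the $\Hf$-dominant range --- also matches the paper's own description of the content of Remark 4.6(iii) and \cite[Proposition 5.23]{Mar2024a}. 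One small correction: when you transport the classical points through $(\tau,\tau)\circ\sigma$, the half-powers of $\sigma$ do not merely ``absorb'' the twist by $|\cdot|^{(l+m-2)/2}$; rather, $\sigma$ implements the passage from the ``$(\Hg,\Hh)$-coordinates'' on $\Gamma_\infty\times\Gamma_\infty$ to the ``cyclotomic/anticyclotomic coordinates,'' and it is the composition with the two copies of $\tau$ that then kills the cyclotomic direction and lands the specialization in $\mathfrak{X}^{\mathrm{crit}}_{p,k}$. Beyond this nuance, the outline is sound.
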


\begin{rmk}
\label{factorizationrmk}
This equality takes place in the ring
\[
\mathscr{R}^-:=(\cl{R}_{\Gamma^-}\hat{\otimes}_{\Lambda_\Hf} \cl{R}_{\Gamma^-})[1/p],\quad\text{where}\quad \cl{R}_{\Gamma^-}:=\Lambda_\Hf\hat{\otimes}_{\cl{O}_E}\cl{O}_E[\![\Gamma^-]\!]
\]
and the notation is as follows.
\begin{enumerate}[(i)]
    \item $\Theta_\infty^{\mathrm{Heeg}}(\Hf,\alpha_t)\in \cl{R}_{\Gamma^-}$ (resp. $\Theta_\infty^{\mathrm{Heeg}}(\Hf,\beta_t)\in \cl{R}_{\Gamma^-}$) is (a slight generalization of) the so-called \emph{big theta element} constructed by Castella-Longo in \cite{CL2016}, building up on works by Bertolini-Darmon (cf. \cite{BD1996},\cite{BD1998},\cite{BD2007}) and Chida-Hsieh (cf. \cite{CH2018}). These $p$-adic $L$-functions interpolate the (square root of the algebraic part of the) special values $L(\Hf_{\!k}/K,\alpha_t\nu,k/2)$ (resp. $L(\Hf_{\!k}/K,\beta_t\nu,k/2)$) for $k\in\Z_{\geq 2}$ even and $\hat{\nu}\in\mathfrak{X}^{\mathrm{crit}}_{p,k}$.
    \item $\alpha^-(w)$ (resp. $\beta^-(w)$) for $w\in \cl{R}_{\Gamma^-}$ denotes the image of the element $w$ via the $\cl{O}_E$-linear automorphism of $\cl{R}_{\Gamma^-}$ uniquely determined by the identity on $\Lambda_E$ and the assignment $[\gamma]\mapsto\alpha^-(\gamma)[\gamma]$ (resp. $[\gamma]\mapsto\beta^-(\gamma)[\gamma]$) on group-like elements on $\cl{O}_E[\![\Gamma^-]\!]$.
    \item The element $\mathscr{A}_{\Hf\Hg\Hh}\in\mathscr{R}^-$ is described more precisely in \cite[Proposition 5.23]{Mar2024a}. Here we are in a slightly different setting, but it is easy to see that, if one fixes a specific $k\in\Z_{\geq 2}\cap 2\Z$, one can run all the constructions in such a way that $\mathscr{A}_{\Hf\Hg\Hh}(k,-,-)\neq 0$ (independently of the specializations in the last two variables). Actually one can make even sure that $\mathscr{A}_{\Hf\Hg\Hh}(k',-,-)\neq 0$ for all $k'$ is a small enough neighbourhood of $k$ in the weight space.
\end{enumerate}    
\end{rmk}

\section{The explicit reciprocity law}
\label{section: ERL}
In this section, we explain how to reinterpret the explicit reciprocity law for diagonal classes obtained in \cite[Theorem A]{BSV2020a} in our setting. 

\subsection{Explicit reciprocity law for diagonal classes}
Let as usual $E$ denote a finite and large enough extension of $\Q_p$ containing with ring of integers $\cl{O}$. For the moment, let $(f,g,h)$ denote any triple of normalized cuspforms
\[
f\in S_k(N,\chi_f,E),\quad g\in S_l(N,\chi_g,E),\quad h\in S_m(N,\chi_h,E)
\]
of some common level $N\geq 5$ coprime to $p$. We assume that $f,g,h$ are eigenforms for the \emph{good} Hecke operators $T_\ell$ for primes $\ell\nmid N$ (we do not assume that they are newforms, though). We also impose the self-duality assumption
\[
\chi_f\cdot\chi_g\cdot\chi_h=1.
\]
and the balanced condition on the triple of weights $(k,l,m)\in(\Z_{\geq 2})^3$, i.e., $(k,l,m)$ are the sizes of the edges of a triangle. We write $\mathbf{r}=(k-2,l-2,m-2)$ and $r=(k+l+m-6)/2$ in what follows and we assume $p+2\geq\max\{k,l,m\}$.

\medskip
As recalled in Section \ref{subsec: improved diagonal classes}, one can construct a class
\[
\kappa_{N,\mathbf{r}}:=\kappa_{N(1),\mathbf{r}}\in H^1\big(\Q,H^3_{\et}(Y_1(N)^3_{\bar{\Q}},\Ls_{[\mathbf{r}]}(2-r))_{\Q_p}\big).
\]

Applying the K\"{u}nneth decomposition and the $(f,g,h)$-isotypic projection, one obtains a class $\kappa_N(f,g,h)\in H^1(\Q,V_N(f,g,h))$, where (following the notation of Section \ref{subsec: Galoisreps}) we set
\[
T_N(f,g,h):=T_N(f)\otimes_{\cl{O}}\otimes T_N(g)\otimes_{\cl{O}}T_N(h)(-1-r).
\]
and $V_N(f,g,h)=T_N(f,g,h)\otimes_{\cl{O}}E$.

As explained in \cite[Proposition 3.2]{BSVast1}, the restriction of class $\kappa_N(f,g,h)$ to a decomposition group at $p$ lies in the geometric Bloch-Kato subspace $H^1_g(\Q_p,V_N(f,g,h))$. Moreover, in this setting one can rather easily check (cf. \cite[Lemma 3.5]{BSVast1}) that one has
\[
H^1_e(\Q_p,V_N(f,g,h))=H^1_f(\Q_p,V_N(f,g,h))=H^1_g(\Q_p,V_N(f,g,h))
\]
for the three Bloch-Kato local conditions at $p$. 

\medskip
In particular, one can apply the Bloch-Kato logarithm to the class $\kappa_N(f,g,h)$
\[
\log_{\BK}:H^1_e(\Q_p,V_N(f,g,h))\xrightarrow{\cong} \frac{{\bf D}_{\dR}(V_N(f,g,h))}{\Fil^0 {\bf D}_{\dR}(V_N(f,g,h))}\cong \Fil^0{\bf D}_{\dR}(V^*_N(f,g,h))^\vee
\]
and view it as a linear functional
\[
\log_{\BK}(\kappa_N(f,g,h)):\Fil^0{\bf D}_{\dR}(V^*_N(f,g,h))\to E.
\]
Here $V^*_N(f,g,h):=V^*_N(f)\otimes_E V^*_N(g)\otimes_E V^*_N(h)(r+2)$ (again with the notation of Section \ref{subsec: Galoisreps}) and $(-)^\vee$ means taking the $E$-dual vector space.

\medskip
As explained in \cite[Section 2.5]{BSVast1}, the comparison isomorphisms of Faltings and Tsuji yield a canonical isomorphism
\[
\Fil^1{\bf D}_{\dR}(V^*_N(g))\cong S_l(\Gamma_1(N),E)[\xi]
\]
and similarly for $h$. Hence we can consider the element $\omega_g\in \Fil^1{\bf D}_{\dR}(V^*_N(g))$ corresponding to $g$ under the above identification. The same procedure gives a differential $\omega_h$ attached to $h$.

\begin{ass}
From now on in this section we assume that the eigenform $f$ is $p$-ordinary.
\end{ass}

Under the above assumption, one can obtain a decomposition
\begin{equation}
\label{dRdecompf}
{\bf D}_{\dR}(V^*_N(f))=\Fil^1{\bf D}_{\dR}(V^*_N(f))\oplus {\bf D}_{\dR}(V^*_N(f))^{\varphi=\alpha_f}   
\end{equation}
where $\alpha_f\in\cl{O}^\times$ is the root of the Hecke polynomial at $p$ for $f$ which is a $p$-adic unit and $\varphi$ is the crystalline Frobenius acting on ${\bf D}_{\dR}(V^*_N(f))={\bf D}_{\cris}(V^*_N(f))$.

\medskip
The $p$-adic comparison isomorphisms between \'{e}tale and de Rham cohomology also yield an identification
\begin{equation}
\label{vdrsufil1}
\frac{{\bf D}_{\dR}(V^*_N(f))}{\Fil^1 {\bf D}_{\dR}(V^*_N(f))}\cong S_k(\Gamma_1(N),E)[f^w]^\vee, 
\end{equation}
where, for any $\xi\in S_\nu(\Gamma_1(N),E)$, $\xi^w$ denotes the image of $\xi$ under the Atkin-Lehner operator $w_N$.

\medskip
One can thus obtain an element $\eta_f^{\alpha}\in {\bf D}_{\dR}(V^*_N(f))^{\varphi=\alpha_f}$ as the unique lift to ${\bf D}_{\dR}(V^*_N(f))^{\varphi=\alpha_f}$ of the element of ${\bf D}_{\dR}(V^*_N(f))/\Fil^1 {\bf D}_{\dR}(V^*_N(f))$ which corresponds, under the identification \eqref{vdrsufil1}, to the linear functional 
\[
S_k(\Gamma_1(N),E)[f^w]\to E\qquad \xi\mapsto \frac{\langle \xi,f^w\rangle_{Pet}}{\langle f^w,f^w\rangle_{Pet}}.
\]
Here $\langle -,-\rangle_{Pet}$ denotes Petersson inner product (normalized as in \cite[Definition 3.4]{Mar2024a}).

\medskip
Let $\{t_n\}_{n\in\Z}$ denote a compatible choice of generators for ${\bf D}_{\dR}(\Q_p(n))$ for varying $n\in\Z$, corresponding to a choice of a uniformizer $t\in{\bf B}_{\dR}^+$.

\medskip
One can then view $\eta_f^\alpha\otimes\omega_g\otimes\omega_h\otimes t_{r+2}$ as an element of $\Fil^0{\bf D}_{\dR}(V^*_N(f,g,h))$.

\begin{theorem}[\cite{BSV2020a}, Theorem A]
\label{BSVMuThmA}
In the above setting (in particular assuming that $f$ is $p$-ordinary), it holds that
\[
\log_{\BK}(\kappa_N(f,g,h))(\eta_f^\alpha\otimes\omega_g\otimes\omega_h\otimes t_{r+2})=_{E^\times}
\frac{\langle \Xi^{\ord}(g,h),f_\alpha^w\rangle_{Pet}}{\langle f_\alpha^w,f_\alpha^w\rangle_{Pet}},
\]
where:
\begin{enumerate}[(i)]
    \item $=_{E^\times}$ means equality up to an explicit scalar in $E^\times$
    \item $\Xi^{\ord}(g,h)$ is the classical modular form obtained applying Hida's ordinary projector to the $p$-adic modular form $d^{(k-l-m)/2} g^{[p]}\times h$ and  $d=q\tfrac{d}{dq}$ denotes Serre's derivative operator
    \item $f_\alpha^w$ denotes the ordinary $p$-stabilization of $f^w$
\end{enumerate}
\end{theorem}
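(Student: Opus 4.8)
The statement is the explicit reciprocity law of Bertolini--Seveso--Venerucci, and the plan is to prove it by transporting the étale computation into syntomic cohomology and then evaluating a triple product by $p$-adic integration of modular forms. First I would pass from étale to syntomic: by compatibility of the Faltings--Tsuji comparison isomorphism with cycle classes and Gysin maps (in the guise of Besser's finite-polynomial cohomology, or the syntomic formalism of Nekov\'{a}\v{r}--Nizio\l), the functional $\log_{\BK}(\kappa_N(f,g,h))$ on $\Fil^0\mathbf{D}_{\dR}(V^*_N(f,g,h))$ is computed by the rigid--syntomic regulator of the (modified) diagonal cycle in $Y_1(N)^3$ with coefficients in $\Ls_{[\mathbf{r}]}$. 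Here one works over a smooth compactification with log structure and uses the short exact sequence presenting $H^1_{\syn}$ in terms of $\mathbf{D}_{\dR}/\Fil^0$ and the unit-root Frobenius eigenspace; this matches the Bloch--Kato local condition at $p$ precisely because, as recalled above, $H^1_e = H^1_f = H^1_g$ for $V_N(f,g,h)$. The denominators introduced by the isomorphism $\mathbf{s}_r$ are bounded by $r!$ and hence invertible under the running hypothesis $p+2 \geq \max\{k,l,m\}$.

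Second, I would make the Gysin pushforward along the diagonal $d\colon Y_1(N) \hookrightarrow Y_1(N)^3$ explicit. By the projection formula, pairing the syntomic diagonal class against $\eta_f^\alpha \otimes \omega_g \otimes \omega_h \otimes t_{r+2}$ reduces to an integral over $Y_1(N)$ of a triple cup product $\int_{Y_1(N)} \eta_f^\alpha \cup \omega_g \cup \omega_h$ in syntomic cohomology with coefficients. Such a cup product is computed, following Darmon--Rotger and Bertolini--Darmon--Rotger, by choosing a Coleman (locally analytic) primitive of one of the three classes; the Frobenius-eigenspace constraint coming from the $\varphi = \alpha_f$ projection on the $f$-factor forces one to integrate the $p$-depletion $\omega_g^{[p]}$ rather than $\omega_g$ itself. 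The Coleman primitive of $\omega_g^{[p]}$, combined with the unit-root splitting \eqref{dRdecompf} of $\mathbf{D}_{\dR}(V_N^*(f))$ and the appropriate power of the Gauss--Manin connection, produces exactly the $p$-adic iterated integral whose underlying $p$-adic modular form is $d^{(k-l-m)/2} g^{[p]} \times h$, where $d$ is Serre's $\theta$-operator. This is the technical heart, and it is the analogue in higher weight --- and in the étale rather than purely de Rham setting --- of the Garrett--Rankin triple product calculation.

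Third, I would identify the outcome with the Petersson quotient. Projecting to the $f$-isotypic part and using that $f$ is ordinary, the nearly-holomorphic/overconvergent form $d^{(k-l-m)/2} g^{[p]} \times h$ is hit by Hida's ordinary idempotent $e_{\ord}$, yielding the classical form $\Xi^{\ord}(g,h)$. Pairing against $\eta_f^\alpha$ then unwinds via de Rham/Poincaré duality and the identification \eqref{vdrsufil1} of $\mathbf{D}_{\dR}(V_N^*(f))/\Fil^1$ with $S_k(\Gamma_1(N),E)[f^w]^\vee$, together with the normalization of $\eta_f^\alpha$ as the functional $\xi \mapsto \langle \xi, f^w\rangle_{Pet}/\langle f^w, f^w\rangle_{Pet}$; this gives $\langle \Xi^{\ord}(g,h), f_\alpha^w\rangle_{Pet}/\langle f_\alpha^w, f_\alpha^w\rangle_{Pet}$. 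The Atkin--Lehner twist $w_N$ appears because the duality pairing between $V_N(f)$ and $V_N^*(f)$ is normalized through $w_N$, and the $p$-stabilization $f_\alpha^w$ enters from the unit-root Frobenius eigencondition.

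The main obstacle is the second step: controlling the explicit triple cup product in syntomic cohomology in the higher-weight case --- keeping track of the coefficient sheaves $\Tsym^r$ and $\Symm^r$, verifying that the Coleman primitive one writes down genuinely represents the overconvergent projection $\Xi^{\ord}(g,h)$, and checking that this projection survives Hida's ordinary idempotent as a classical form. Everything else (the syntomic-to-$\log_{\BK}$ comparison, the final duality bookkeeping, and pinning down the constant hidden in $=_{E^\times}$) is a long but essentially routine unwinding of definitions.
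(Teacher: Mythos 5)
The paper does not actually prove this statement: it is quoted verbatim as Theorem~A of the cited reference, with no argument supplied. There is therefore no ``paper's own proof'' to compare your sketch against; the comparison has to be with the strategy of \cite{BSV2020a} itself.

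Your sketch does capture the right overall route --- compare the \'{e}tale Abel--Jacobi map with the syntomic (finite-polynomial) regulator of the diagonal cycle, then explicate the triple cup product on $Y_1(N)$ with coefficients via Coleman primitives and the unit-root splitting, and finally unwind the de~Rham/Poincar\'{e} duality together with the normalization of $\eta_f^\alpha$ in \eqref{vdrsufil1} to read off the Petersson quotient. This is indeed the mechanism that Bertolini--Seveso--Venerucci (following Darmon--Rotger) use, and all the named ingredients (Besser/Nekov\'{a}\v{r}--Nizio\l{} syntomic cohomology, the unit-root decomposition \eqref{dRdecompf}, Hida's ordinary projector, the Atkin--Lehner twist $w_N$) do appear in that argument. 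Two small corrections of emphasis. First, the $p$-depletion $g^{[p]}$ is not forced by the $\varphi=\alpha_f$ eigencondition on the $f$-factor but by the fact that $(k-l-m)/2<0$ (balanced weights), so that one must apply a genuine antiderivative $d^{(k-l-m)/2}$, which only makes sense after $p$-depleting; the $\varphi=\alpha_f$ projection is what then recovers a well-defined classical form after $e_{\ord}$. Second, a nontrivial part of the cited proof that your sketch passes over is the treatment of the open modular curve versus its compactification --- the syntomic computation is carried out on a log-smooth compactification and one has to check that the cuspidal contributions vanish or are controlled; this is a real technical input rather than pure bookkeeping. As an account of the cited proof at the level of detail of this paper, though, your outline is accurate.
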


\begin{rmk}
If instead of $\omega_g$ (resp. $\omega_h$), one considers $\omega_{\breve{g}}$ (resp. $\omega_{\breve{h}}$) for any $\breve{g}\in S_l(\Gamma_1(N),E)[g]$ (resp. $\breve{h}\in S_m(\Gamma_1(N),E)[h]$), then the same formula of Theorem \ref{BSVMuThmA} holds, i.e.,
\[
\log_{\BK}(\kappa_N(f,g,h))(\eta_f^\alpha\otimes\omega_{\breve{g}}\otimes\omega_{\breve{h}}\otimes t_{r+2})=_{E^\times}
\frac{\langle \Xi^{\ord}(\breve{g},\breve{h}),f_\alpha^w\rangle_{Pet}}{\langle f_\alpha^w,f_\alpha^w\rangle_{Pet}}.
\]
\end{rmk}

\subsection{Detecting non-vanishing of diagonal classes}
Now we specialize again to the setting of Section \ref{subsec:tameeulersystem}. Recall that this means:
\begin{enumerate}[(i)]
    \item $f\in S_k(\Gamma_0(N_f))$ is a $p$-ordinary newform (with $k\geq 2$ even);
    \item $g\in S_l(\Gamma_1(M),\chi_g,E)$ (resp. $h\in S_m(\Gamma_1(M),\chi_h,E)$) is the theta series attached to a Hecke character $\psi_1$ (resp. $\psi_2$) of a quadratic imaginary field $K$ of $\infty$-type $(1-l,0)$ (resp. $(1-m,0)$) and of conductor coprime to $N_f p$, with the requirement $\chi_g\chi_h=1$;
    \item $p$ is always inert in $K$;
    \item we keep assuming $l\geq m$ to fix ideas;
    \item we let $N=N_f\cdot M$.
\end{enumerate}

Moreover, we will work under Assumptions \ref{CRass} and \ref{CMass}.

\medskip
Let $\breve{g}=g(q^{N_f})$ and $\breve{h}=h(q^{N_f})$. We can consider a family of theta series $\Hg$ (resp. $\Hh$) as in Section \ref{subsection: theta families} having $g^{[p]}$ (resp. $h^{[p]}$) as specialization in weight $l$ (resp. $m$). We also let $\Hf$ denote the cuspidal Hida family which passes through the ordinary $p$-stabilization of $f$.

Then, following the construction of the $p$-adic $L$-function $\mathcal{L}_p^f(\Hf,\Hg,\Hh)$ carried out in \cite[Section 5.3]{Mar2024a}, it follows that specializing this $p$-adic $L$-function at weights $(k,l,m)$ one has
\[
\mathcal{L}_p^f(\Hf,\Hg,\Hh)(k,l,m)=_{E^\times}\frac{\langle \Xi^{\ord}(\breve{g},\breve{h}),f_\alpha^w\rangle_{Pet}}{\langle f_\alpha^w,f_\alpha^w\rangle_{Pet}}.
\]
We refer to \cite[Proposition 3.6 and Remark 3.7]{Mar2024a} for the description of the values of $\mathcal{L}_p^f(\Hf,\Hg,\Hh)$ in terms of ratios of Petersson products and we observe that the appearance of the \emph{test vectors} $\breve{g}$ and $\breve{h}$ is simply a consequence of the adjustments needed to obtain an optimal interpolation property for $\mathcal{L}_p^f(\Hf,\Hg,\Hh)$. Note that the triple $(k,l,m)$ lies \emph{outside} the interpolation region for $\mathcal{L}_p^f(\Hf,\Hg,\Hh)$.

\begin{defi}
Set $T(f,g,h):=T_{N_f}(f)\otimes_{\cl{O}}T_M(g)\otimes_{\cl{O}}T_M(h)(-1-r)$ and $V(f,g,h):=T(f,g,h)\otimes_{\cl{O}}E$. We define the class $\kappa(f,g,h)\in H^1(\Q,V(f,g,h))$ to be the cohomology class obtained from $\kappa_N(f,g,h)$ applying the projection from levels $(\Gamma_1(N),\Gamma_1(N),\Gamma_1(N))$ to levels $(\Gamma_1(N_f),\Gamma_1(M),\Gamma_1(M))$ that corresponds to the choice of test vectors $(f,\breve{g},\breve{h})$ as above.
\end{defi}

We also set dually $T^*(f,g,h):=T^*_{N_f}(f)\otimes_{\cl{O}}T^*_M(g)\otimes_{\cl{O}}T^*_M(h)(2+r)$ and $V^*(f,g,h)=T^*(f,g,h)\otimes_{\cl{O}}E$.
\begin{corollary}
In the above setting, it follows that $\mathcal{L}_p^f(\Hf,\Hg,\Hh)(k,l,m)\neq 0$ implies that $\kappa_N(f,g,h)\neq 0$ and that $\kappa(f,g,h)\neq 0$.
\begin{proof}
The fact that $\mathcal{L}_p^f(\Hf,\Hg,\Hh)(k,l,m)\neq 0$ implies $\kappa_N(f,g,h)\neq 0$ follows immediately from the previous discussion. For what concerns $\kappa(f,g,h)$, the adjointness between pullback and pushforward along degeneracy maps between modular curves (cf. \cite[eq. (188)]{BSVast1} for an instance) implies that 
\[
\log_{\BK}(\kappa_N(f,g,h))(\eta_f^\alpha\otimes\omega_{\breve{g}}\otimes\omega_{\breve{h}}\otimes t_{r+2})=\log_{\BK}(\kappa(f,g,h))(\eta_f^\alpha\otimes\omega_g\otimes\omega_h\otimes t_{r+2})
\]
where in the RHS we view $\eta_f^\alpha\otimes\omega_g\otimes\omega_h\otimes t_{r+2}\in\Fil^0{\bf D}_{\dR}(V^*(f,g,h))$. The claim concerning $\kappa(f,g,h)$ follows immediately.
\end{proof}
\end{corollary}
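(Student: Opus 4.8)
The plan is to combine the explicit reciprocity law of Theorem~\ref{BSVMuThmA} with the interpolation formula for $\mathcal{L}_p^f(\Hf,\Hg,\Hh)$ at the weight triple $(k,l,m)$ recalled above, reading off the non-vanishing of both diagonal classes from the non-vanishing of a single Bloch--Kato logarithm. First I would combine the remark following Theorem~\ref{BSVMuThmA} (applied with the test vectors $\breve g$, $\breve h$) with the formula $\mathcal{L}_p^f(\Hf,\Hg,\Hh)(k,l,m)=_{E^\times}\langle\Xi^{\ord}(\breve{g},\breve{h}),f_\alpha^w\rangle_{Pet}/\langle f_\alpha^w,f_\alpha^w\rangle_{Pet}$ to obtain
\[
\log_{\BK}(\kappa_N(f,g,h))(\eta_f^\alpha\otimes\omega_{\breve{g}}\otimes\omega_{\breve{h}}\otimes t_{r+2})=_{E^\times}\mathcal{L}_p^f(\Hf,\Hg,\Hh)(k,l,m).
\]
If the right-hand side is nonzero, then $\log_{\BK}(\kappa_N(f,g,h))$ is a nonzero linear functional; since by \cite[Proposition 3.2 and Lemma 3.5]{BSVast1} the restriction of $\kappa_N(f,g,h)$ at $p$ lands in $H^1_e(\Q_p,V_N(f,g,h))$, on which $\log_{\BK}$ is an isomorphism, this already forces $\kappa_N(f,g,h)\neq 0$.

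Next I would transfer this to $\kappa(f,g,h)$. By definition the latter is the image of $\kappa_N(f,g,h)$ under the projection attached to the test vectors $(f,\breve g,\breve h)$, realized by pushforward along the degeneracy maps from level $(\Gamma_1(N),\Gamma_1(N),\Gamma_1(N))$ to level $(\Gamma_1(N_f),\Gamma_1(M),\Gamma_1(M))$. Using the adjointness of pullback and pushforward along these maps, together with the compatibility of the $p$-adic comparison isomorphisms and of the Bloch--Kato logarithm with this functoriality, one gets
\[
\log_{\BK}(\kappa_N(f,g,h))(\eta_f^\alpha\otimes\omega_{\breve{g}}\otimes\omega_{\breve{h}}\otimes t_{r+2})=\log_{\BK}(\kappa(f,g,h))(\eta_f^\alpha\otimes\omega_g\otimes\omega_h\otimes t_{r+2}),
\]
where on the right $\eta_f^\alpha\otimes\omega_g\otimes\omega_h\otimes t_{r+2}$ is viewed inside $\Fil^0{\bf D}_{\dR}(V^*(f,g,h))$. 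Since $\kappa(f,g,h)$ is again geometric at $p$ (being a pushforward of a geometric class), the same argument as before applies, and the non-vanishing of the left-hand side forces $\kappa(f,g,h)\neq 0$.

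The one point that is not a purely formal consequence of the machinery of \cite{BSVast1} is the adjointness identity in the second display: one must check that the $(f,\breve g,\breve h)$-projection defining $\kappa(f,g,h)$ is, on de Rham cohomology, adjoint to the pullback of differentials along the relevant degeneracy maps (so that $\omega_g\otimes\omega_h\mapsto\omega_{\breve{g}}\otimes\omega_{\breve{h}}$ while $\eta_f^\alpha$ is carried along), which amounts to the compatibility of Faltings's and Tsuji's comparison isomorphisms, and of the Bloch--Kato logarithm, with the étale trace along degeneracy maps. This is essentially recorded in \cite{BSVast1} (cf.\ the use of \cite[eq.~(188)]{BSVast1}), and I expect no serious obstacle; once it is in place the corollary follows at once.
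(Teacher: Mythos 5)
Your proposal is correct and follows essentially the same route as the paper: read off the non-vanishing of $\kappa_N(f,g,h)$ from the Bloch--Kato logarithm formula of Theorem~\ref{BSVMuThmA} (applied with the test vectors $\breve g,\breve h$) combined with the interpolation identity for $\mathcal{L}_p^f(\Hf,\Hg,\Hh)(k,l,m)$, then transfer to $\kappa(f,g,h)$ via the adjointness of pullback and pushforward along degeneracy maps (the same \cite[eq.~(188)]{BSVast1} the paper cites). Your extra observations---that $\log_{\BK}$ is an isomorphism on $H^1_e$, and that $\kappa(f,g,h)$ remains geometric at $p$ as a pushforward of a geometric class---are correct and simply make explicit points the paper leaves implicit.
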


\subsection{Detecting non-vanishing of anticyclotomic Euler systems}
We keep working in the setting of the previous section. As already observed in Remark \ref{rmk: decompGalois}, we have a decomposition (with the same notation appearing there)
\[
T(f,g,h)\cong \big(T(f)(1-k/2)\otimes_{\cl{O}}\Ind_{K}^{\Q}\cl{O}(\hat{\eta_1}^{-1})\big)\oplus \big(T(f)(1-k/2)\otimes_{\cl{O}}\Ind_{K}^{\Q}\cl{O}(\hat{\eta_2}^{-1})\big)
\]
and similarly for the dual version
\[
T^*(f,g,h)\cong \big(T^*(f)(k/2)\otimes_{\cl{O}}\Ind_{K}^{\Q}\cl{O}(\hat{\eta_1})\big)\oplus \big(T^*(f)(k/2)\otimes_{\cl{O}}\Ind_{K}^{\Q}\cl{O}(\hat{\eta_2})\big)
\]
We denote $p^{?}_i$ for $?\in\{*,\emptyset\}$ and $i\in\{1,2\}$ the projections onto the $i$-th summand in the two cases. We denote in the same way the maps induced by such projections at the level of Galois cohomology and Dieudonn\'{e} modules.
\begin{lemma}
We have $p_i(\kappa(f,g,h))=\kappa_{f,\eta_i}$ (cf. Notation \ref{nota: bottom}).
\begin{proof}
This follows directly from the construction.
\end{proof}
\end{lemma}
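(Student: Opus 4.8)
The plan is to trace both classes back to the single diagonal class $\kappa_{N,\mathbf{r}}=\kappa_{N(1),\mathbf{r}}$ of Section~\ref{subsec: improved diagonal classes} and then to check that they differ only in the recipe used to cut out the $\eta_i$-isotypic piece, this recipe being the same on the two sides once one recognizes the Galois-theoretic projection $p_i$ as the incarnation of the maps of Corollary~\ref{cor:LLZ}.

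First I would specialize the ``improved diagonal class'' machinery to the bottom level $n=1$. Since $\Delta_1$ is trivial one has $W_1=Y_1(N)^2$, $Z_1=Y_1(N)^3$, $q_1=\mathrm{id}$, $\iota_1=d$, $\phi(1)=1$, and the map $\pi_2\colon Y_1(N(1))\to Y_1(N)$ is the identity; hence $\kappa_{N,\Delta_1,\mathbf{r}}=\kappa_{N,\mathbf{r}}$ and $\kappa^{(2)}_{1,\mathbf{r}}=\kappa^{(1)}_{1,\mathbf{r}}$ is simply the K\"unneth component of $\kappa_{N,\mathbf{r}}$. Consequently, the chain of operations defining $\kappa_{f,\eta_i}=\tilde{\kappa}_{1,f,\eta_i}$ and the chain defining $\kappa(f,g,h)$ both start from this K\"unneth component and both apply the same test-vector projections: the one attached to $\breve{f}$, sending $H^1_{\et}(Y_1(N)_{\bar{\Q}},\cdot)$ to $H^1_{\et}(Y_1(N_f)_{\bar{\Q}},\cdot)$, and the ones attached to $\breve{g}=g(q^{N_f})$ and $\breve{h}=h(q^{N_f})$, sending the level-$N$ cohomology to level $M$; these are the same test vectors on both sides, since Section~\ref{section: ERL} works in the setting of Section~\ref{subsec:tameeulersystem}. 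The Tate twists also match, because $-1-r-(1-k/2)=(2-l-m)/2$, which is precisely the twist $\cl{O}(\tfrac{2-l-m}{2})$ built into $H^1(\psi_1,\psi_2,1)$ and $H^1(\psi_1,\psi_2^{\mathbf{c}},1)$.

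After these common operations, the only remaining difference is the projection onto the $\eta_i$-component. On the $\kappa(f,g,h)$ side this is $p_i$: the $(g,h)$-part lands in $V_M(g)\otimes_E V_M(h)=\Ind_K^{\Q}E(\hat{\psi}_1^{-1})\otimes_E\Ind_K^{\Q}E(\hat{\psi}_2^{-1})$ and $p_i$ picks out the $i$-th summand of the decomposition of Remark~\ref{rmk: decompGalois}. On the $\kappa_{f,\eta_i}$ side, specializing Corollary~\ref{cor:LLZ} to $\frk{n}=(1)$ (so $\frk{m}=\frk{c}$, $\overline{\frk{m}}=\overline{\frk{c}}$), the relevant projection is the composite of the isomorphisms $\nu_1,\nu_2$ (resp. $\nu_1,\nu_2^{\mathbf{c}}$) of Theorem~\ref{thm:LLZ} with \eqref{projdecomp12} (resp. \eqref{projdecomp12c}) encoded in $\underline{\nu}_1$ (resp. $\underline{\nu}_1^{\mathbf{c}}$), followed by Shapiro's lemma and corestriction from $K[c]$ to $K[1]=K$ (legitimate since $p\nmid h_K$ gives $K[1]=K$). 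So it suffices to check: (a) the isomorphism $\nu_1$ of Theorem~\ref{thm:LLZ}, after passing to the $g$-Hecke-isotypic quotient, recovers the canonical identification $V_M(g)=\Ind_K^{\Q}E(\hat{\psi}_1^{-1})$ recalled in Section~\ref{subsec:llz} (and likewise for $h$); (b) under this identification the abstract decomposition of $\Ind_K^{\Q}E(\hat{\psi}_1^{-1})\otimes_E\Ind_K^{\Q}E(\hat{\psi}_2^{-1})$ of Remark~\ref{rmk: decompGalois} and the explicit push-out along $\sigma_{cn}$ (resp. $\sigma^{\mathbf{c}}_{cn}$) of \eqref{projdecomp12} (resp. \eqref{projdecomp12c}) single out the same two sub-representations with the same projectors; and (c) the Shapiro and corestriction steps match the Shapiro identification through which $p_i(\kappa(f,g,h))$ is viewed in $H^1(K,V_{f,\eta_i})$, a naturality point using that $\hat{\eta}_i^{-1}$ already factors through $H[cp^\infty]$, so that $\underline{\nu}_1$ and $\underline{\nu}_1^{\mathbf{c}}$ land in $\Ind_{K[c]}^{\Q}\cl{O}(\hat{\eta}_i^{-1})$. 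The integral refinement (from $E$-coefficients to the $\hat{\eta}_i^{-1}$-twisted lattices) is then automatic on both sides under Assumption~\ref{introass}, exactly as in the proof of Theorem~\ref{principal:tame}.

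The main obstacle is point (b): the two decompositions of $\Ind_K^{\Q}E(\hat{\psi}_1^{-1})\otimes_E\Ind_K^{\Q}E(\hat{\psi}_2^{-1})$ are produced by genuinely different recipes — one purely representation-theoretic (Remark~\ref{rmk: decompGalois}), the other an explicit push-out of patched cohomology along the class field theory maps $\sigma_{cn}$ and $\sigma^{\mathbf{c}}_{cn}$ twisted by the ideal-theoretic data $\phi_{1,\frk{m}}\otimes\phi_{2,\overline{\frk{m}}}$, resp. $\phi_{1,\frk{m}}\otimes\phi_{2,\frk{m}}$ — and one must verify they agree on the nose, with no stray scalars. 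This is essentially the content of Lemma~\ref{lemma:hecketorcg} together with Remark~\ref{rmk: decompGalois} and Corollary~\ref{cor:LLZ}; once it is in place, the rest is a routine, if somewhat tedious, compatibility check of K\"unneth projections, test-vector projections, Shapiro isomorphisms, corestriction and Tate twists, all of which is already implicit in the constructions of Sections~\ref{subsec:llz}, \ref{subsec:tameeulersystem} and \ref{section: ERL}.
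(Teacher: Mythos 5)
Your proposal is correct and takes essentially the same approach as the paper, which simply asserts that the lemma "follows directly from the construction"; you have carried out the book-keeping that this assertion implicitly relies on, and your identification of the one non-trivial compatibility — that the representation-theoretic decomposition of Remark \ref{rmk: decompGalois} and the push-out along $\sigma_{c}$, $\sigma^{\mathbf{c}}_{c}$ via Lemma \ref{lemma:hecketorcg} and Corollary \ref{cor:LLZ} cut out the same summands with matching projectors — is exactly where the substance lies.
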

\begin{lemma}
We have $p^*_2(\eta_f^\alpha\otimes\omega_g\otimes\omega_h\otimes t_{r+2})=0$.
\begin{proof}
We claim that
\begin{equation}
\label{claimV2*}
\Fil^0{\bf D}_{\dR}(V^*_2)= \Fil^1{\bf{D}}_{\dR}(V^*(f))\otimes_E {\bf D}_{\dR}((\Ind_K^{\Q}E(\hat{\eta_2}))(k/2)),   
\end{equation}
where $V^*_2:=V^*(f)(k/2)\otimes_E\Ind_{K}^{\Q}E(\hat{\eta_2})$.

Note that this allows us to conclude, since then $p^*_2(\eta_f^\alpha\otimes\omega_g\otimes\omega_h\otimes t_{r+2})$ would lie in
\[
\big(\Fil^0{\bf D}_{\dR}(V^*_2)\big)\cap \big({\bf D}_{\dR}(V^*(f))^{\varphi=\alpha_f}\otimes_E {\bf D}_{\dR}((\Ind_K^{\Q}E(\hat{\eta_2}))(k/2))\big)=\{0\}.
\]
To prove the claim \eqref{claimV2*}, it suffices to have a look at the Hodge-Tate weights involved. Recalling that $\eta_2$ has $\infty$-type $(-(l-m)/2,(l-m)/2)$, we see that (cf. the analogous discussion in the proof of Lemma \ref{lemma: descriptionSelmer}) the Hodge-Tate weights of $V_2^*$ are $1-k/2\pm (l-m)/2\leq 0$ and $k/2\pm (l-m)/2>0$. Since $\Fil^0{\bf D}_{\dR}(-)$ captures non-positive Hodge-Tate weights, we immediately deduce that $\Fil^0{\bf D}_{\dR}(V^*_2)$ must be the subspace of ${\bf D}_{\dR}(V^*_2)$ corresponding to the Hodge-Tate weights $1-k/2\pm (l-m)/2$, which is precisely 
\[
\Fil^1{\bf{D}}_{\dR}(V^*(f))\otimes_E {\bf D}_{\dR}((\Ind_K^{\Q}E(\hat{\eta_2}))(k/2)).
\]
\end{proof}
\end{lemma}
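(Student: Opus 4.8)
The plan is to combine the ordinary decomposition \eqref{dRdecompf} with a Hodge--Tate weight count, in the spirit of the proof of Lemma~\ref{lemma: descriptionSelmer}. Write $V^*_2:=V^*(f)(k/2)\otimes_E\Ind_{K}^{\Q}E(\hat{\eta_2})$, the second summand of $V^*(f,g,h)$, and recall that the projector $p^*_2$ affects only the $g$- and $h$-factors, leaving the $f$-factor untouched. I would first record two membership statements for $w:=p^*_2(\eta_f^\alpha\otimes\omega_g\otimes\omega_h\otimes t_{r+2})\in {\bf D}_{\dR}(V^*_2)$. Since $\eta_f^\alpha\in {\bf D}_{\dR}(V^*(f))^{\varphi=\alpha_f}$ and $p^*_2$ is compatible with the de Rham realisations, $w$ lies in ${\bf D}_{\dR}(V^*(f))^{\varphi=\alpha_f}\otimes_E {\bf D}_{\dR}\big((\Ind_K^{\Q}E(\hat{\eta_2}))(k/2)\big)$; and since $\eta_f^\alpha\otimes\omega_g\otimes\omega_h\otimes t_{r+2}\in\Fil^0{\bf D}_{\dR}(V^*(f,g,h))$ and $p^*_2$ is a morphism of filtered modules, $w$ lies in $\Fil^0{\bf D}_{\dR}(V^*_2)$. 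It therefore suffices to show that these two subspaces of ${\bf D}_{\dR}(V^*_2)$ intersect trivially.

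The crux is the identification \eqref{claimV2*},
\[
\Fil^0{\bf D}_{\dR}(V^*_2)=\Fil^1{\bf D}_{\dR}(V^*(f))\otimes_E {\bf D}_{\dR}\big((\Ind_K^{\Q}E(\hat{\eta_2}))(k/2)\big),
\]
which I would prove by inspecting Hodge--Tate weights at a decomposition group at $p$ (a copy of $G_{\Q_{p^2}}$, since $p$ is inert in $K$). As $\eta_2$ has $\infty$-type $(-(l-m)/2,(l-m)/2)$, the character $\hat{\eta_2}|_{G_{\Q_{p^2}}}$ contributes the Hodge--Tate weights $\pm(l-m)/2$ (the Lubin--Tate bookkeeping being the one already used in the proof of Lemma~\ref{lemma: descriptionSelmer}), so the four Hodge--Tate weights of $V^*_2$ are $1-k/2\pm(l-m)/2$ and $k/2\pm(l-m)/2$. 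The balanced condition on $(k,l,m)$, together with the parity constraints ($k$ even, $l\equiv m\bmod 2$), gives $|l-m|\le k-2$, whence the first pair is $\le 0$ and the second pair is $>0$. Since $\Fil^0{\bf D}_{\dR}(-)$ isolates the non-positive Hodge--Tate weights and $\Fil^1{\bf D}_{\dR}(V^*(f))$ is exactly the summand of ${\bf D}_{\dR}(V^*(f))$ accounting for the weights $1-k/2\pm(l-m)/2$ (its complement ${\bf D}_{\dR}(V^*(f))^{\varphi=\alpha_f}$ carrying $k/2\pm(l-m)/2$), the asserted equality follows.

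Granting \eqref{claimV2*}, the intersection in question is contained in $\big(\Fil^1{\bf D}_{\dR}(V^*(f))\cap {\bf D}_{\dR}(V^*(f))^{\varphi=\alpha_f}\big)\otimes_E {\bf D}_{\dR}\big((\Ind_K^{\Q}E(\hat{\eta_2}))(k/2)\big)$, which vanishes because the two factors on the left are complementary in ${\bf D}_{\dR}(V^*(f))$ by \eqref{dRdecompf}; hence $w=0$. The one delicate point is the Hodge--Tate weight computation for the induced character at the inert prime $p$: this is where the comparison with the proof of Lemma~\ref{lemma: descriptionSelmer} (ultimately the Colmez/Lubin--Tate description of ${\bf D}_{\dR}$ over $\Q_{p^2}$) enters, and where the balanced inequality is used to separate the two weight pairs across $0$.
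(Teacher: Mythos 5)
Your proof is correct and takes essentially the same route as the paper's: establish the identity \eqref{claimV2*} for $\Fil^0{\bf D}_{\dR}(V^*_2)$ by a Hodge--Tate weight count, then observe that $p^*_2(\eta_f^\alpha\otimes\omega_g\otimes\omega_h\otimes t_{r+2})$ lies in the intersection of $\Fil^0{\bf D}_{\dR}(V^*_2)$ with ${\bf D}_{\dR}(V^*(f))^{\varphi=\alpha_f}\otimes_E {\bf D}_{\dR}\big((\Ind_K^{\Q}E(\hat{\eta_2}))(k/2)\big)$, which vanishes by the ordinary decomposition \eqref{dRdecompf}. You make two points explicit that the paper leaves implicit --- that the balanced inequality $|l-m|\leq k-2$ is what separates the weight pairs across $0$, and that the intersection reduces to $\bigl(\Fil^1{\bf D}_{\dR}(V^*(f))\cap {\bf D}_{\dR}(V^*(f))^{\varphi=\alpha_f}\bigr)\otimes_E(\cdots)=0$ --- both of which are accurate and welcome clarifications.
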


\begin{corollary}
\label{cor: restrtoeta1}
In the above setting, we have that
\[
\log_{\BK}(\kappa(f,g,h))(\eta_f^\alpha\otimes\omega_g\otimes\omega_h\otimes t_{r+2})=\log_{\BK}(\kappa_{f,\eta_1})(p^*_1(\eta_f^\alpha\otimes\omega_g\otimes\omega_h\otimes t_{r+2})).
\]
In particular $\mathcal{L}_p^f(\Hf,\Hg,\Hh)(k,l,m)\neq 0$ implies that $\kappa_{f,\eta_1}\neq 0$.
\end{corollary}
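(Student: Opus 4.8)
The plan is to deduce the displayed identity from the two lemmas just proved together with the naturality of the Bloch--Kato exponential under the $G_{\Q_p}$-equivariant direct sum decomposition $V(f,g,h)=V_1\oplus V_2$, with $V_i=V(f)(1-k/2)\otimes_E\Ind_K^\Q E(\hat{\eta_i}^{-1})$, recorded above (cf. Remark \ref{rmk: decompGalois}), and dually $V^*(f,g,h)=V_1^*\oplus V_2^*$. First I would note that the functors $\DD_{\cris}(-)$, $\DD_{\dR}(-)$ and $\Fil^0\DD_{\dR}(-)$ all commute with finite direct sums and are compatible with the de Rham duality pairing, so that $\Fil^0\DD_{\dR}(V^*(f,g,h))=\Fil^0\DD_{\dR}(V_1^*)\oplus\Fil^0\DD_{\dR}(V_2^*)$, and the Bloch--Kato exponential $\DD_{\dR}(V(f,g,h))/\Fil^0\xrightarrow{\sim}H^1_e(\Q_p,V(f,g,h))$ (recalled at the start of Section \ref{section: ERL}) splits as the direct sum of the corresponding maps for $V_1$ and $V_2$. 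By functoriality this gives, for every $\omega^*\in\Fil^0\DD_{\dR}(V^*(f,g,h))$,
\[
\log_{\BK}(\kappa(f,g,h))(\omega^*)=\sum_{i=1,2}\log_{\BK}\!\big(p_i(\kappa(f,g,h))\big)\big(p_i^*(\omega^*)\big)=\sum_{i=1,2}\log_{\BK}(\kappa_{f,\eta_i})\big(p_i^*(\omega^*)\big),
\]
where the last equality is the first of the two lemmas just proved ($p_i(\kappa(f,g,h))=\kappa_{f,\eta_i}$).

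Then I would specialize to $\omega^*=\eta_f^\alpha\otimes\omega_g\otimes\omega_h\otimes t_{r+2}$: the second lemma ($p_2^*(\eta_f^\alpha\otimes\omega_g\otimes\omega_h\otimes t_{r+2})=0$) annihilates the $i=2$ summand, leaving exactly the asserted identity. For the ``in particular'' assertion I would chain the results of this section: the displayed formula opening this subsection expresses $\mathcal{L}_p^f(\Hf,\Hg,\Hh)(k,l,m)$ as a ratio of Petersson products which, by Theorem \ref{BSVMuThmA} and its subsequent Remark (applied with the test vectors $\breve g,\breve h$), equals $\log_{\BK}(\kappa_N(f,g,h))(\eta_f^\alpha\otimes\omega_{\breve g}\otimes\omega_{\breve h}\otimes t_{r+2})$ up to a nonzero scalar; the Corollary proved just above in Section \ref{section: ERL} (adjointness of pullback and pushforward along the relevant degeneracy maps) identifies this with $\log_{\BK}(\kappa(f,g,h))(\eta_f^\alpha\otimes\omega_g\otimes\omega_h\otimes t_{r+2})$, which by the identity above equals $\log_{\BK}(\kappa_{f,\eta_1})(p_1^*(\eta_f^\alpha\otimes\omega_g\otimes\omega_h\otimes t_{r+2}))$. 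So non-vanishing of $\mathcal{L}_p^f(\Hf,\Hg,\Hh)(k,l,m)$ forces this last quantity to be nonzero; since $\res_p(\kappa_{f,\eta_1})=p_1(\res_p\kappa(f,g,h))$ lies in $H^1_e(\Q_p,V_1)$ (as $H^1_e$ of a direct sum is the direct sum of $H^1_e$'s) and $\log_{\BK}$ is injective there (e.g. because $V_1$ is pure of weight $-1$, so $\DD_{\cris}(V_1)^{\varphi=1}=0$), we conclude $\kappa_{f,\eta_1}\neq 0$.

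The argument is essentially formal once the two lemmas are in hand; the only point needing a little care is the compatibility of $\log_{\BK}$ and of the local conditions at $p$ with the projections $p_i$ and $p_i^*$, which reduces to the naturality of the Bloch--Kato exponential for morphisms of (potentially) de Rham representations plus the fact that $H^1_f=H^1_g$ for $V(f,g,h)$ (recalled at the start of the section) and for each summand at $p$ --- the latter being the Hodge--Tate weight computations carried out in the proof of Lemma \ref{lemma: descriptionSelmer} for $V_2$ and the analogous one for $V_1$ (whose Hodge--Tate weights over $\Q_{p^2}$ are $1-k/2\pm(l+m-2)/2$ and $k/2\pm(l+m-2)/2$, all negative by the balanced condition, so that $\Fil^0\DD_{\dR}(V_1)=0$ and $H^1_f(\Q_{p^2},V_1)=H^1(\Q_{p^2},V_1)$). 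I do not expect any genuine obstacle here: this corollary is a bookkeeping consequence of the explicit reciprocity law and the decomposition lemmas.
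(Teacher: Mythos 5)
Your argument is correct and coincides with what the paper leaves implicit: the corollary is intended as a direct bookkeeping consequence of the two preceding lemmas, and you have supplied exactly the right bookkeeping. In particular, the key steps — that $\DD_{\dR}$, $\Fil^0$ and the de Rham duality pairing respect the direct sum $V(f,g,h)=V_1\oplus V_2$ (so $\log_{\BK}$ splits), the substitution of the two lemmas, and the observation that $\DD_{\cris}(V_1)^{\varphi=1}=0$ (so $\exp_{\BK}$, and hence $\log_{\BK}$, is injective on the $V_1$-component) — are all correct, and the chain $\mathcal{L}_p^f(\Hf,\Hg,\Hh)(k,l,m)\neq 0 \Rightarrow \log_{\BK}(\kappa(f,g,h))(\cdots)\neq 0 \Rightarrow \kappa_{f,\eta_1}\neq 0$ is exactly what is intended.

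One small inaccuracy in the final parenthetical remark: the list of Hodge--Tate weights $1-k/2\pm(l+m-2)/2$ and $k/2\pm(l+m-2)/2$ for $V_1$ is not ``all negative'' — for instance $k/2+(l+m-2)/2>0$ and $1-k/2+(l+m-2)/2=(l+m-k)/2>0$ by balance. So the assertion $\Fil^0\DD_{\dR}(V_1)=0$ does not follow from what you wrote (the correct computation is the one recorded in the paper's proof of Lemma~\ref{lemma: descriptionSelmer}, which shows the Hodge--Tate weights of $V_{f,\eta_1}$ over $\Q_{p^2}$ are $1-k/2-j_1$ and $k/2-j_1$ with $j_1=(l+m-2)/2\geq k/2$, both $\leq 0$). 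Fortunately this is a side observation that plays no role in your main argument: the injectivity of $\log_{\BK}$ you need follows from purity (or simply from $\DD_{\cris}(V(f,g,h))^{\varphi=1}=0$ and functoriality of $\DD_{\cris}$ under direct summands), so the corollary stands.
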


\section{The proof of Theorem \ref{mainThmIntro}}
\label{sec: final proof}
In this section we conclude the proof of Theorem \ref{mainThmIntro}, which is the main arithmetic application of this note.

\medskip
We keep the notation as in the previous section and we let $\chi$ be an anticyclotomic Hecke character of $K$ of conductor $c\cdot\cl{O}_K$ with $c\in\Z_{\geq 1}$ such that $(c, p\cdot d_K\cdot N_f)=1$ and $c$ is not divisible by primes inert in $K$ (cf. Assumption \ref{CMass} (iii)). We also assume that $\chi$ has $\infty$-type $(-j,j)$ with $j\geq k/2$.

\medskip
With the notation of Section \ref{factorizationsectionintro}, the Galois character associated with the $p$-adic avatar of $j$ can thus be viewed as a character $\hat{\chi}:\mathscr{G}_\infty\to E^\times$ (for a suitable choice of $E$ large enough finite extension of $\Q_p$). Fixing again a splitting $\mathscr{G}_\infty\cong\Delta_c\times\Gamma^-$ (where recall that $\Delta_c$ is a finite group and $\Gamma^{-}\cong\Z_p$ is identified with the Galois group of the anticyclotomic $\Z_p$-extension of $K$) and write $\hat{\chi}=(\chi_t,\chi^-)$ according to this decomposition.

\begin{ass}
\label{asschit}
We assume that $\chi_t$ can be written as $\chi_t=\gamma/\gamma^\mathrm{c}$ with $\gamma$ a ray class character of $K$ of conductor dividing $c\cdot\cl{O}_K$.
\end{ass}

\begin{rmk}
According to \cite[Lemma 6.9]{DR2017}, $\chi_t$ can always be written as $\chi_t=\gamma/\gamma^\mathbf{c}$ with $\gamma$ a ray class character of $K$ of conductor prime to $p\cdot N_f$. Hence the above assumption is simply imposing an extra condition on the conductor of such $\gamma$, which is a consequence of Assumption \ref{CMass} (iii). It is likely that with some extra work one could relax this condition.
\end{rmk}

According to \cite[Theorem D]{CH2018}, in our setting (cf. Assumption \ref{CMass} (ii)) and assuming $p>k-2$ (which we will do anyways for the applications) one can find an auxiliary ring class character of conductor a power of a prime $\ell\neq p$ (and such that $(\ell,N_f\cdot D_K\cdot c)=1$) split in $K$ such that 
\begin{equation}
\label{nonvanishLbeta}
L(f/K,\delta^2,k/2)\neq 0.    
\end{equation}

Consider now the ray class characters $\xi_1=\gamma\delta$ and $\xi_2=\gamma^{-\mathbf{c}}\delta^{-1}$ (so that $\xi_1\xi_2=\chi_t$ and $\xi_1\xi_2^\mathbf{c}=\delta^2$) and the generalized families of theta series  $\Hg=\boldsymbol{\theta}_{\xi_1}$ and $\Hh=\boldsymbol{\theta}_{\xi_2}$.

\begin{lemma}
\label{lemma: nonvanishiff}
In the above setting, we have that $\mathcal{L}_p^f(\Hf,\Hg,\Hh)(k,j+1,j+1)\neq 0$ if and only if $\Theta_\infty^{\mathrm{Heeg}}(\Hf,\chi_t)(k,\chi^-)\neq 0$.
\begin{proof}
This is a direct consequence of equation \eqref{nonvanishLbeta} and the factorization of Theorem \ref{factorizationthmintro}. Specializing the triple product $p$-adic $L$-function $\mathcal{L}_p^f(\Hf,\Hg,\Hh)$ at $(k,j+1,j+1)$ corresponds to specializing the RHS of the factorization of Theorem \ref{factorizationthmintro} at $(k,\chi^-,1)$ (cf. \cite[Lemma 5.21]{Mar2024a}). More precisely, this means that:
\begin{enumerate}[(i)]
    \item we are specializing $\Theta_\infty^{\mathrm{Heeg}}(\Hf,\chi_t)$ at $(k,\chi^{-})$, hence outside the range of interpolation as we chose $j\geq k/2$;
    \item we are specializing $\Theta_\infty^{\mathrm{Heeg}}(\Hf,1)\big)$ at $(k,\delta^2)$. 
\end{enumerate} 
Note that for this last factor factor we are in the range of $p$-adic interpolation and that the interpolation formula (as recalled, for instance, in \cite[Proposition 5.18]{Mar2024a}) makes sure that
\[
\Theta_\infty^{\mathrm{Heeg}}(\Hf,1)(k,\delta^2)=0\quad\Leftrightarrow\quad  L(f/K,\delta^2,k/2)= 0
\]
Finally, for the term $\mathscr{A}_{\Hf\Hg\Hh}$, recall that its non-vanishing properties are described in Remark \ref{factorizationrmk} (iii). The lemma follows combining these observations.  
\end{proof}
\end{lemma}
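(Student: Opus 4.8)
The plan is to read the claim directly off the factorization theorem (Theorem~\ref{factorizationthmintro}) applied to the families $\Hg=\boldsymbol{\theta}_{\xi_1}$ and $\Hh=\boldsymbol{\theta}_{\xi_2}$. First I would pin down the two ring class characters that appear on the right-hand side there. Since $\delta$ is a ring class character one has $\delta^{\mathbf{c}}=\delta^{-1}$, so with $\xi_1=\gamma\delta$ and $\xi_2=\gamma^{-\mathbf{c}}\delta^{-1}$ we get
\[
\alpha:=\xi_1\xi_2=\gamma/\gamma^{\mathbf{c}}=\chi_t,\qquad \beta:=\xi_1\xi_2^{\mathbf{c}}=\delta/\delta^{\mathbf{c}}=\delta^2 .
\]
Both $\chi_t$ and $\delta^2$ are ring class characters of conductor prime to $p$ (the first by Assumption~\ref{asschit}, the second since it has $\ell$-power conductor with $\ell\neq p$), hence they factor through the finite part $\Delta_c$ of $\mathscr{G}_\infty$; in the notation of Section~\ref{factorizationsectionintro} this means $\alpha_t=\chi_t$, $\beta_t=\delta^2$, and the wild parts $\alpha^-$ and $\beta^-$ are trivial.

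Next I would specialize the identity of Theorem~\ref{factorizationthmintro} in the weight variable at $k$. The key input is the dictionary of \cite[Lemma~5.21]{Mar2024a}: evaluating $\mathcal{L}_p^f(\Hf,\Hg,\Hh)$ at the (non-interpolated) weight triple $(k,j+1,j+1)$ matches, after applying the operator $(1_{\Lambda_\Hf},(\tau,\tau)\circ\sigma)$ defining $\mathcal{L}^f_{p,ac}$, the evaluation of the right-hand side at $(k,\chi^{-},1)$, where $\hat{\chi}=(\chi_t,\chi^{-})$. Since $\alpha^-=\beta^-=1$, feeding $(k,\chi^-,1)$ into the right-hand side of Theorem~\ref{factorizationthmintro} gives, up to a sign and up to the factor $\mathscr{A}_{\Hf\Hg\Hh}(k,\chi^-,1)$,
\begin{align*}
&\mathcal{L}_p^f(\Hf,\Hg,\Hh)(k,j+1,j+1)\\
&\qquad=\pm\,\mathscr{A}_{\Hf\Hg\Hh}(k,\chi^-,1)\cdot\Theta_\infty^{\mathrm{Heeg}}(\Hf,\chi_t)(k,\chi^-)\cdot\Theta_\infty^{\mathrm{Heeg}}(\Hf,\delta^2)(k,1),
\end{align*}
where $\Theta_\infty^{\mathrm{Heeg}}(\Hf,\delta^2)(k,1)$ denotes the value at the trivial character of $\Gamma^-$ (equivalently, this is the $\beta_t=\delta^2$, $\beta^-=1$ instance of the second factor in the factorization).

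It then remains to check that the two factors other than $\Theta_\infty^{\mathrm{Heeg}}(\Hf,\chi_t)(k,\chi^-)$ are nonzero, which immediately gives the asserted equivalence. For $\mathscr{A}_{\Hf\Hg\Hh}(k,\chi^-,1)$ this is precisely Remark~\ref{factorizationrmk}(iii): having fixed $k$ at the outset, one runs the constructions so that $\mathscr{A}_{\Hf\Hg\Hh}(k,-,-)$ is identically nonzero. For $\Theta_\infty^{\mathrm{Heeg}}(\Hf,\delta^2)(k,1)$ I would invoke the interpolation formula for the big theta element (as recalled in \cite[Proposition~5.18]{Mar2024a}): the trivial character has infinity type $(0,0)$ with $|0|<k/2$, so this point lies \emph{inside} the interpolation range, and there the square of this value is a nonzero multiple of the algebraic part of $L(f/K,\delta^2,k/2)$, which is nonzero by the choice of $\delta$ recorded in \eqref{nonvanishLbeta} (available thanks to \cite[Theorem~D]{CH2018}). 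Combining the two non-vanishing statements with the displayed equality yields $\mathcal{L}_p^f(\Hf,\Hg,\Hh)(k,j+1,j+1)\neq 0$ if and only if $\Theta_\infty^{\mathrm{Heeg}}(\Hf,\chi_t)(k,\chi^-)\neq 0$. The one genuinely delicate point is the bookkeeping behind \cite[Lemma~5.21]{Mar2024a}, namely matching the weight triple $(k,j+1,j+1)$ with the anticyclotomic evaluation point $(k,\chi^-,1)$ — in particular controlling the half-integral exponents introduced by the automorphism $\sigma$ — together with the verification that $\alpha^-$ and $\beta^-$ are trivial; the remaining steps are direct citations.
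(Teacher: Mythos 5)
Your proof follows essentially the same route as the paper's: cite the factorization of Theorem \ref{factorizationthmintro}, translate the weight specialization $(k,j+1,j+1)$ to the anticyclotomic evaluation point $(k,\chi^-,1)$ via \cite[Lemma 5.21]{Mar2024a}, use the interpolation formula together with \eqref{nonvanishLbeta} for the $\delta^2$-factor, and use Remark \ref{factorizationrmk}(iii) for the $\mathscr{A}_{\Hf\Hg\Hh}$ term. Your spelled-out computation $\alpha=\xi_1\xi_2=\chi_t$, $\beta=\xi_1\xi_2^{\mathbf{c}}=\delta^2$, together with the observation that $\alpha^-=\beta^-=1$ since both characters have conductor prime to $p$, is in fact a welcome clarification: the paper's parenthetical $\Theta_\infty^{\mathrm{Heeg}}(\Hf,1)(k,\delta^2)$ is a slip of notation (one cannot evaluate a $\Gamma^-$-variable at $\delta^2$, which has $\ell$-power conductor and hence is not a character of $\Gamma^-$), and your formulation $\Theta_\infty^{\mathrm{Heeg}}(\Hf,\delta^2)(k,1)$ is the one consistent with the factorization theorem's notation $\beta^-(\Theta_\infty^{\mathrm{Heeg}}(\Hf,\beta_t))$; the two give the same $L$-value $L(f/K,\delta^2,k/2)$, so the argument is unaffected.
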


The proof of Theorem \ref{mainThmIntro} can be finally obtained combining Lemma \ref{lemma: nonvanishiff}, Corollary \ref{cor: restrtoeta1} and Corollary \ref{cor: rank1}.

\printbibliography

\end{document}